\DeclareFontFamily{U}{mathc}{}
\DeclareFontShape{U}{mathc}{m}{it}%
{<->s*[1.03] mathc10}{}
\DeclareMathAlphabet{\mathcal}{U}{mathc}{m}{it}
\newtheorem{theorem}{Theorem}[section]
\newtheorem{lemma}[theorem]{Lemma}
\newtheorem{proposition}[theorem]{Proposition}
\newtheorem{corollary}[theorem]{Corollary} 
\theoremstyle{definition}  
\newtheorem{example}[theorem]{Example}
\newtheorem{remark}[theorem]{Remark}
\newcommand{\id}{\text{id}} 
\newcommand{\FPdim}{\text{FPdim}}
\renewcommand{\Vec}{\operatorname{\operatorname{\mathsf{Vec}}}}
\newcommand{\Pic}{{{Pic}}}
\newcommand{\Picbr}{Pic_{br}}
\newcommand{\Aut}{Aut}
\DeclareMathOperator{\Coker}{\operatorname{\mathsf{Coker}}}
\newcommand{\Inv}{Inv}
\DeclareMathOperator{\Ind}{Ind}
\DeclareMathOperator{\Hom}{\operatorname{\mathsf{Hom}}}
\DeclareMathOperator{\Quad}{\operatorname{\mathsf{Quad}}}
\DeclareMathOperator{\Ext}{\operatorname{\mathsf{Ext}}}
\DeclareMathOperator{\Ker}{\operatorname{\mathsf{Ker}}}
\DeclareMathOperator{\Sym}{\operatorname{\mathsf{Sym}}}
\newcommand{\Exctr}{Ex_{ctr}}
\newcommand{\uuExctr}{\mathbf{Ex_{ctr}}}
\newcommand{\Mext}{{Mext}}
\newcommand{\uMext}{\mathcal{M\mkern-3mu ext}} 
\newcommand{\uuMext}{\mathbf{Mext}}
\newcommand{\uPic}{\mathcal{P\mkern-3mu ic}} 
\newcommand{\uuPic}{\mathbf{Pic}}
\newcommand{\uPicbr}{\mathcal{{P\mkern-3mu ic}_{br}}} 
\newcommand{\uuPicbr}{\mathbf{{Pic}_{br}}}
\newcommand{\uAut}{\mathcal{A\mkern-3mu ut}} 
\newcommand{\uInv}{\mathcal{I\mkern-3mu nv}}
\newcommand{\alt}{\text{alt}}
\newcommand{\rev}{\text{rev}}
\newcommand{\kk}{\mathbbm{k}}
\newcommand{\eps}{\varepsilon}
\newcommand{\B}{\mathcal{B}}
\newcommand{\C}{\mathcal{C}}
\newcommand{\I}{\mathcal{I}}
\newcommand{\D}{\mathcal{D}}
\newcommand{\E}{\mathcal{E}}
\newcommand{\Z}{\mathcal{Z}}
\newcommand{\M}{\mathcal{M}}
\newcommand{\A}{\mathcal{A}}
\newcommand{\be}{\mathbf{1}}
\renewcommand{\be}{\mathbf{1}}
\newcommand{\bt}{\boxtimes}
\newcommand{\ot}{\otimes}
\newcommand{\Fun}{Fun}
\newcommand{\uFun}{\mathcal{F\mkern-3mu un}}
\newcommand{\TwoFun}{2\mhyphen Fun}
\newcommand{\uTwoFun}{\mathcal{2\mhyphen F\mkern-3mu un}}
\newcommand{\Vect}{\mathcal{V \mkern-3mu ect}} 
\newcommand{\sVect}{\mathcal{sV \mkern-3mu ect}} 
\newcommand{\Rep}{\mathcal{R\mkern-3mu ep}}
\newcommand\void[1]{}
\mathchardef\mhyphen="2D
\newcommand{\FUN}{\mathbf{2\mhyphen}\mathbf{Fun}}
\newcommand{\bG}{\mathbf{G}}
\begin{document}
%
\title[Minimal non-degenerate extensions  of a super-Tannakian  category]{Computing the group of minimal non-degenerate extensions  of a super-Tannakian  category}
\author{Dmitri Nikshych}
\address{Department of Mathematics and Statistics,
University of New Hampshire,  Durham, NH 03824, USA}
\email{dmitri.nikshych@unh.edu}
\date{}
\begin{abstract}
We prove an analog of the K\"unneth formula for the groups of minimal non-degenerate extensions \cite{LKW1}  of  
symmetric fusion categories. We describe in detail the structure of the group of minimal extensions of a pointed super-Tannakian
fusion category. This description resembles that of the third cohomology group of a finite abelian group. We explicitly compute
this group in several concrete examples.
\end{abstract}

\maketitle

%


\section{Introduction }

There is a notion of categorical ``orthogonality''  in a braided fusion category $\C$  \cite{DGNO1, Mu}. Namely,  
objects $X,Y$ of $\C$ {\em centralize}  each other if the squared braiding between them is identity, i.e., $c_{Y,X}c_{X,Y} = \id_{X\ot Y}$,
where $c$ denotes the braiding of $\C$.  For a fusion subcategory $\B\subset \C$,  its {\em centralizer}
 in $\B$ is the fusion subcategory $\B'\subset \C$ consisting of all objects $X$ centralizing every object $Y$  in $\B$.
 When $\C$ is pointed, i.e., corresponds to a pre-metric group $(A,\,q)$, where $q$ is a quadratic form on a finite Abelian group $A$, 
 fusion subcategories of $\B$ are in bijection with subgroups of $A$   and the centralizers are identified with  orthogonal complements.
 This construction allows interpreting  many aspects of the theory of braided fusion categories in terms of ``categorical linear algebra".  
 
 For example,
 a symmetric fusion subcategory $\E\subset \C$  satisfies $\E \subseteq \E'$ and so can be thought of as a categorical analog of a coisitropic subspace.
 When $\C$ is non-degenerate, one has  $\E=\E'$, i.e., $\E$ is a {\em Lagrangian} subcategory, if and only if 
  $\FPdim(\E)^2 = \FPdim(\C)$, where $\FPdim$ denotes the Frobenius-Perron dimension. An embedding $\E \hookrightarrow \C$
with this property  will be  called a {\em minimal non-degenerate extension} (or simply a {\em minimal  extension}) of $\E$.
 Lan, Kong, and Wen observed in  \cite{LKW1}  that  there is a natural product of minimal extensions of $\E$, so that the set of their
 equivalence classes is an abelian group $\Mext(\E)$ (in fact, minimal extensions of $\E$ form a symmetric $2$-categorical group 
 $\uuMext(\E)$).  

Groups of minimal extensions were studied by many authors, including \cite{ABK, BCHM, BGHNPRW,  DNR, GVR, GS, LKW1, LKW2, VR}.
From the physics point of view, minimal extensions appear in the description of  2+1D topological orders  and symmetry-protected trivial 
(SPT) orders \cite{LKW1}, and  symmetric invertible fermionic phases \cite{ABK, BCHM}.
 It is known that for  a Tannakian category $\E= \Rep(G)$, where $G$ is a finite group,  the group $\Mext(\E)$ is isomorphic to 
 $H^3(G,\, \kk^\times)$,  the third cohomology group of $G$. 
 For  $\E=\sVect$, the category of super-vector spaces,
 it is isomorphic to $\mathbb{Z}/16\mathbb{Z}$ (this statement is known in physics as Kitaev's $16$-fold way \cite{Kit}).
 
 The goal of this paper is to describe the group of minimal extensions of a super-Tannakian fusion category $\E$ (with an emphasis on the case
 when $\E$ is pointed) and compute it in several concrete examples.
 
 The main results of the present paper are the following.
 
 In Section~\ref{Section Kunneth} we prove a version of the K\"unneth formula for $\Mext(\Rep(G) \bt \E)$, where $G$ is a finite group
 and $\E$ is a symmetric fusion category.   Theorem~\ref{Kunneth general}  establishes a group isomorphism
\begin{equation}
\Mext(\Rep(G)\bt \E)  \cong \Mext(\E) \times \TwoFun(G,\, \uuPic(\E)),
\end{equation}
where  $\TwoFun(G,\, \uuPic(\E))$ is the group of monoidal 2-functors from $G$ to the $2$-categorical Picard group of $\E$. For $\E=\Rep(L)$
this recovers  the familiar K\"unneth formula computing the third cohomology of the product $G\times L$.

In Section~\ref{Section pointed} we analyze the structure of the group $\Mext(\E)$ for a pointed symmetric category $\E$.
We consider a filtration
\begin{equation}
\Mext_{triv}(\E) \subset \Mext_{pt}(\E) \subset  \Mext_{int}(\E) \subset \Mext(\E),
\end{equation}
 consisting, respectively,  of the subgroups of  trivial, pointed, and integral minimal extensions of $\E$, and compute its composition factors
 in Theorem~\ref{factors}. This description of $\Mext(\E)$  generalizes that of the third cohomology group $H^3(A,\,\kk^\times)$ 
 of a finite abelian group $A$ \cite{DS, MN}. A new feature is the appearance of cohomological obstructions from the theory of graded extensions \cite{DN2, ENO}.
 

Finally, in Section~\ref{examples section}  we apply our results to compute  the group of minimal extensions of 
concrete examples of super-Tannakian categories,  namely, $\Rep(\mathbb{Z}_{2^n}^f)$ and $\Rep(\mathbb{Z}_{2} \times \mathbb{Z}_2^f)$.

\subsection{Acknowledgements} 

The author is grateful  to David Aasen, Maissam Barkeshli, Alexei Davydov, Pavel Etingof, and Victor Ostrik for useful discussions.  
The author's work  was supported  by  the  National  Science  Foundation  under  
Grant No.\ DMS-1801198.  

%
 
\section{Preliminaries}

In this paper, we work over an algebraically closed field $\kk$ of characteristic $0$. We adapt the following font convention
for higher categorical groups:  we use italics ($G$) to denote ordinary groups, calligraphic ($\mathcal{G}$) for categorical groups,
and boldface $\mathbf{G}$ for $2$-categorical groups. We use the same name in different fonts to denote the truncations
of a given $2$-categorical group. For example, we write $\Pic(\E)=\pi_0(\uuPic(\E))$ and  $\uPic(\E)=\pi_{\leq 1}(\uuPic(\E))$
for the truncations of the $2$-categorical Picard group $\uuPic(\E)$.

\subsection{Symmetric fusion categories and their Picard groups}
\label{sect Picard groups}

We refer the reader to \cite{DGNO2,EGNO} for the basics of the theory of braided fusion categories.

By Deligne's theorem \cite{D},  symmetric fusion categories are parameterized by pairs $(G,\,t)$,
where $G$ is a finite group and $t\in Z(G)$ is a central element such that $t^2=1$. 
The corresponding category $\Rep(G,\,t)$ consists of finite-dimensional
representations of  $G$, with the usual tensor product and braiding given by
\begin{equation}
c_{V,W}(v\ot w) =
\begin{cases}
-w \ot v, & \text{if $t|_V=-1$ and  $t|_W =-1$} ,\\
w \ot v, & \text{otherwise,}
\end{cases} 
\end{equation}
for all irreducible representations $V,\, W$ of $G$, where $v\in V,\, w\in W$.
This category is called {\em Tannakian} if $t=1$ (so it is simply $\Rep(G)$) and {\em super-Tannakian}
if $t\neq 1$. When $G$ contains a unique automorphism orbit of central elements of order $2$
we will use notation $\Rep(G^f)$ for $\Rep(G,\,t)$ which is common in physics. Here $f$ stands for ``fermionic".
For example, $\sVect = \Rep(\mathbb{Z}_2^f)$.

Let $\B$ be  a braided fusion category. The $2$-categorical {\em Picard group} $\uuPic(\B)$ \cite{ENO} is formed by
invertible $\B$-module categories with the tensor product $\bt_\B$. Its $1$-categorical truncation $\uPic(\B):=\pi_{\leq 1}(\uuPic(\B))$ is equivalent  to the
categorical group $\uAut(\Z(\B);\B)$ of braided tensor autoequivalences of $\Z(\B)$ trivializable on $\B$ \cite{DN1}.

The Picard group of a symmetric fusion category was determined by Carnovale  in \cite{C}:
\begin{equation}
\label{Carnovale}
\Pic(\Rep(G,\,t)) =
\begin{cases}
H^2(G,\,\kk^\times) & \text{if $t=1$},\\
H^2(G,\,t,\,\kk^\times) & \text{if $t\neq 1$ and  $\langle t \rangle$ is not a direct summand of $G$},\\
H^2(G,\,t,\,\kk^\times) \times \mathbb{Z}_2 & \text{if $t\neq 1$ and  $\langle t \rangle$ is  a direct summand of $G$.}
\end{cases}
\end{equation}
Here the elements of $H^2(G,\,\kk^\times)$ correspond to module categories $\Rep(k_\mu[G])$ of projective representations
of $G$ with a fixed $2$-cocycle $\mu \in Z^2(G,\,\kk^\times)$ and the generator of $\mathbb{Z}_2$ is $\Rep(G_0)$,
where $G =G_0 \times \langle t \rangle$.

The  group $H^2(G,\,t,\,\kk^\times)$ is defined as follows (see  \cite{C}  and also \cite{DN2}).
There is a canonical bilinear map
\begin{equation}
\label{form xi}
H^2(G,\, \kk^\times) \times G\to \mathbb{Z}/ 2\mathbb{Z} : (\mu,\, x) \mapsto \xi_\mu(x),
\end{equation}
where $\xi_\mu(x)$ is defined by the condition 
\[
(-1)^{\xi_\mu(x)} = \frac{\mu(x,\,t)}{\mu(t,\,x)},\qquad \mu \in H^2(G,\, \kk^\times),\, x\in G.
\]
Introduce a new multiplication on $H^2(G,\,\kk^\times)$ by 
\begin{equation}
\label{product *}
\mu * \nu (x,\,y)  = \mu(x,\,y) \nu(x,\,y)\, (-1)^{\xi_\mu(x) \xi_\nu(y)}, \qquad x,y\in G,
\end{equation}
on representatives $\mu,\, \nu$ of cohomology classes in $H^2(G,\,\kk^\times)$.  The resulting group
will be denoted $H^2(G,\,t,\, \kk^\times)$. It is non-canonically isomorphic to $H^2(G,\,\kk^\times)$,
see \cite{C} for details. 

There is a subgroup  $\Pic(\Rep(G,\,t))_{int} \subset \Pic(\Rep(G,\,t))$ consisting of {\em integral} module categories, i.e.,
those in which all objects have integral Frobenius-Perron dimension. We have  
\begin{equation}
\label{Picint = H2 Gtk}
\Pic(\Rep(G,\,t))_{int} = H^2(G,\,t,\,\kk^\times).
\end{equation}

The {\em braided $2$-categorical Picard group}  $\uuPicbr(\E)$  \cite{DN2} of 
 a symmetric fusion category $\E$  consists of invertible objects in the $2$-center of a monoidal $2$-category
 of $\E$-module categories. The underlying braided categorical group   $\uPicbr(\E)$ was described in \cite[Section 6]{DN2}.  
 One has $\Picbr(\E) \cong \Pic(\E)\times  \Aut_\ot(\id_\E)$  and the corresponding quadratic form
 \begin{equation}
 \label{Quadratic from Q}
 Q_\E: \Picbr(\E)  \to \Inv(\E)
 \end{equation}
 is explicitly computed in \cite[Proposition 6.11]{DN2}.  In particular,  for $\E=\Rep(G,\,t)$ the integral part
 of $\Picbr(\E)$ is $H^2(G,\,t,\, \kk^\times) \times Z(G)$,
 where $Z(G)$ denotes the center of $G$.  In this case, $\Aut_\ot(\id_\E) = \Hom(G,\, \kk^\times)$ and 
 the restriction of the quadratic form \eqref{Quadratic from Q} on the integral part of $\Picbr(\E)$ is given  by 
 \begin{equation}
 \label{Qint}
Q_\E (\mu,\, z) = \frac{ \mu(zt^{\xi_\mu(z)+1},\, -)} {\mu( -,\, zt^{\xi_\mu(z)+1})}
\end{equation}
for all $\mu \in H^2(G,\,t,\, \kk^\times) $ and $z\in Z(G)$.

\subsection{The $2$-categorical group of minimal extensions of a symmetric  category}

We recall the definition given in \cite{LKW1}.  Let $\E$ be a symmetric fusion category. An embedding $\E\hookrightarrow \C$
into a non-degenerate  braided fusion category $\C$ is called a {\em minimal non-degenerate extension} (or, simply, a 
{\em minimal extension}) of $\E$ if the latter coincides with its centralizer in $\C$, i.e., $\E=\E'$, where
\[
\E' = \{X \in \C \mid c_{YX}\circ c_{XY} =\id_{X\ot Y} \mbox{ for all } Y\in \E\}.
\]
This condition is equivalent to the equality $\FPdim(\C)=\FPdim(\E)^2$, where $\FPdim$ denotes the Frobenius-Perron dimension of
the category.  Minimal extensions of $\E$ form a $2$-groupoid $\uuMext(\E)$.
An isomorphism between minimal extensions $\E\hookrightarrow \C_1$ and $\E\hookrightarrow \C_2$ 
is a braided equivalence $\C_1 \xrightarrow{\sim} \C_2$ that restricts to the identity on $\E$. A $2$-isomorphism is 
a natural isomorphism of equivalences, again identical on $\E$.

There is a natural tensor product of minimal extensions of $\E$. Namely, let $\E \hookrightarrow \C_1$ and $\E \hookrightarrow \C_2$
be minimal extensions. Then   $\E\bt\E$ embeds into $\C_1 \bt \C_2$. Since $\E$ is symmetric,
the tensor product  $\ot: \E \bt \E \to \E$ is a braided tensor functor. Its adjoint sends the unit object $\be$ to an \'etale (i.e., separable commutative)
 algebra $A \in \E \bt \E$.
The fusion category $(\C_1\bt\C_2)_A$  of $A$-modules in $\C_1\bt\C_2$ 
contains a {\em braided} fusion subcategory 
\[
\C_1\boxdot \C_2 :=(\C_1\bt\C_2)_A^0
\]
of {\em local} modules,
i.e., of  $A$-modules $(V,\, \rho: A\ot V \to V)$ with $c_{X,A}c_{A,X}\rho = \rho$. Note that 
$\E \cong (\E \bt \E)_A$  is embedded into  $\C_1\boxdot \C_2$.
The resulting embedding $\E \hookrightarrow \C_1\boxdot \C_2$ is, by definition,  the tensor product of $\E \hookrightarrow \C_1$  and $\E \hookrightarrow \C_2$. 
The unit object for this tensor product  is $\E \hookrightarrow \Z(\E)$, the embedding of $\E$ into its Drinfeld center.
The inverse of the embedding $\E\hookrightarrow \C$ is $\E\hookrightarrow \C^\rev$, where $\C^\rev$ coincides with $\C$ as a fusion
category with a braiding obtained by reversing the braiding of $\C$, namely $c^\rev_{X,Y}=c_{Y,X}^{-1},\,X,Y\in \C$.  
We refer the reader to \cite[Section 4.2]{LKW1} for details.

As agreed above, we denote  $\Mext(\E) =\pi_0(\uuMext(\E))$ and $\uMext(\E) =\pi_{\leq 1}(\uuMext(\E))$, the truncations
of $\uuMext(\E)$.

\begin{example}
\label{RepG example}
Let $G$ be a finite group. The group $\Mext(\Rep(G))$ was computed in \cite[Section 4.3]{LKW1}. Namely, a typical  element of  this group
is a  twisted  Drinfeld double of $G$:
\[
\Rep(G) \hookrightarrow \Z(\Vec_G^\omega),\qquad \omega \in Z^3(G,\,\kk^\times). 
\]
The product of these extensions corresponds to the product of $3$-cocycles.  Furthermore,
extensions corresponding to $3$-cocycles $\omega_1,\, \omega_2$ are isomorphic if and only if $\omega_1,\, \omega_2$ are cohomologous.
 Thus,
\begin{equation}
\label{Mext RepG}
\Mext(\Rep(G)) \cong H^3(G,\,\kk^\times). 
\end{equation}
This result can also be deduced from \cite[Section 4.4.10]{DGNO1} since for any minimal extension $\Rep(G) \hookrightarrow \C$, the 
image of $\Rep(G)$ is a Lagrangian subcategory of $\C$.
\end{example}

\begin{example}
\label{sVect example}
It was shown independently in \cite{BGHNPRW},  \cite[Proposition 5.14]{DNO},   \cite{Kit}, and \cite[Theorem 4.25]{LKW1}  that 
\begin{equation}
\label{Mext sVect}
\Mext(\sVect) \cong \mathbb{Z}_{16}. 
\end{equation}
This statement is known as Kitaev's $16$-fold way \cite{Kit}.  Any {\em Ising} category, i.e., a non-pointed braided fusion category of
dimension $4$ \cite[Appendix B]{DGNO2}, is a generator of this group.  Other elements of $\Mext(\sVect)$ are pointed braided fusion
categories coming from metric groups $(A,\,q)$ of order $4$  such that there exists $u\in A$ with $q(u)=-1$.

The isomorphism  \eqref{Mext sVect} can be identified with
\begin{equation}
\label{central charge 16}
\Mext(\sVect) \xrightarrow{\sim} \{\xi\in \kk^\times \mid \xi^{16}=1\}: \C \mapsto \xi(\C),
\end{equation}
where $\xi(\C)$ is the central charge of the category $\C$. Thus, the class of a minimal non-degenerate extension of $\sVect$ is 
completely determined by its central charge.
\end{example}

Let $\Rep(G,\,t)$ be a super-Tannakian category.  It contains a unique maximal Tannakian subcategory $\mathcal{T}= \Rep(G/\langle t \rangle)$.
If $\Rep(G,\,t) \hookrightarrow \C$ is a minimal extension then the de-equivariantization $\C^0:=\mathcal{T}' \bt_{\mathcal{T}} \Vect$ is a minimal
extension of $\sVect$.
The assignment  
\begin{equation}
\label{wGt}
w_{(G,t)}: \Mext(\Rep(G,\,t))  \to \Mext(\sVect)  :\C \mapsto  \C^0
\end{equation} 
is a group homomorphism \cite[Section 5.2]{LKW1}. As in \eqref{central charge 16},
this homomorphism is identified with taking the central charge.

By \cite[Corollary 4.9]{GVR}, $w_{(G,t)}$ is surjective if and only if $\langle t \rangle$ is a direct summand of $G$. In  this case,
this homomorphism splits, i.e., $ \Mext(\sVect)$ is a direct summand of $\Mext(\Rep(G,\,t))$.

\subsection{Central and braided   graded extensions}
\label{prelim ext}

Let $\B$ be a braided fusion category. Let $G$ be a finite group. A {\em central} $G$-graded extension of $\B$ is a $G$-graded
fusion category 
\begin{equation}
\label{displayed extension}
\C =\bigoplus_g\, \C_g,\qquad \C_e=\B,
\end{equation}
along with an embedding $\B \hookrightarrow \Z(\C)$.  It was shown in \cite{JMPP}
that  a  central $G$-graded extension   is the same thing as a $G$-crossed braided extension. 
By \cite{DN2, ENO} the $2$-groupoid of such extensions is equivalent to the $2$-groupoid of monoidal $2$-functors
$G \to \uuPic(\B)$. For such an extension there is canonical action 
of $G$ on the trivial component given by the composition
\begin{equation}
\label{action on the trivial component}
G \to \uPic(\B) \to \uAut^{br}(\B),
\end{equation}
where the first functor corresponds to the graded extension \eqref{displayed extension} and the second one is the canonical monoidal
functor associated to $\B$ \cite{DN1}.

Now let $A$ be a finite Abelian group. Braided $A$-graded extensions of a braided fusion category $\B$ were  classified
in \cite{DN2}.  The $2$-groupoid of such extensions is equivalent to the $2$-groupoid of braided monoidal $2$-functors
$A \to \uuPicbr(\B)$, where the latter is the braided $2$-categorical group of invertible {\em braided} $\B$-module categories.

\section{Central graded extensions and the K\"unneth formula}
\label{Section Kunneth}

\subsection{The group of monoidal $2$-functors to a braided $2$-categorical group}

Let $G$ be a  group and let $\mathcal{G}$ be a  braided categorical group.
Let $C,\,C': G \to \mathcal{G}$ be 
monoidal functors, where $C$ is given by  $x\mapsto \C_x$  with the monoidal
structure $M_{x,y}: \C_x\ot \C_y \xrightarrow{\sim} \C_{xy}$  and $C'$ is given by  $x\mapsto \C'_x$  with the monoidal
structure $M'_{x,y}: \C'_x\ot \C'_y \xrightarrow{\sim} \C'_{xy}$, $x,y\in G$.   Clearly, such functors must factor through the
commutator subgroup of $G$.

Define  a monoidal functor 
\begin{equation}
\label{tildeM}
\tilde{C}:=
C\ot C': G \to  \mathcal{G}: \qquad x\mapsto \C_x \ot \C'_x.
\end{equation}
with the monoidal  structure 
\begin{equation}
\tilde{M}_{x,y}:  \C_x \ot \C'_x \ot  \C_y \ot \C'_y \xrightarrow{B_{x',y}} \C_x \ot \C_y \ot  \C'_x \ot  \C'_y 
\xrightarrow{M_{x,y} \ot  M'_{x,y}}  \C_{xy} \ot   \C'_{xy},\qquad x,\,y\in G.
\end{equation}
Here $B_{x',y}$ denotes the braiding in $\mathcal{G}$ between $\C'_x$ and $\C_y$.  With this product, the isomorphism
classes of such braided monoidal functors form a categorical group which we denote $\uFun(G,\,\mathcal{G})$. 
The identity element of this group is the trivial functor and the inverse of $C: x\mapsto \C_x$ is  $C^{-1}: x\mapsto \C^{-1}_x$.
If $\mathcal{G}$  is symmetric, the underlying group $\Fun(G,\,\mathcal{G})$ is Abelian.

There is an obvious short exact sequence
\begin{equation}
\label{sequence 1}
0 \to H^2(G,\, \pi_1(\mathcal{G})) \to \Fun(G,\,\mathcal{G}) \to \Hom(G,\,\pi_0(\mathcal{G})) \to 0.
\end{equation}
Here $H^2(G,\, \pi_1(\mathcal{G}))$ is isomorphic to the group  of monoidal functor structures on the trivial functor.
This sequence does not split in general because the braiding of $\mathcal{G}$ may be non-trivial.

Now let $\bG$  be a braided $2$-categorical group.  Let $\pi_0(\bG),\,\pi_1(\bG),\,\pi_2(\bG)$ denote, respectively, the group
of invertible objects of $\bG$, the group of automorphisms of the unit object $\be_\bG$, and the group of automorphisms of $\id_{\be_\bG}$.
Let $\pi_{\leq 1}(\bG)$ denote the braided categorical group obtained by truncating $\bG$, whose objects are objects of $\bG$ and morphisms
are isomorphism classes of $1$-cells in $\bG$.  Let $\pi_{\geq 1}(\bG)$ denote the symmetric categorical group of $1$-automorphisms
of the unit object of $\bG$.

There is a $2$-categorical analog of the above construction of a categorical group of monoidal functors.
It was explained in \cite[Section 2.8]{DN2}  that isomorphisms classes of monoidal 
$2$-functors from $G$ to $\bG$ also  form a categorical group denoted $\uTwoFun(G,\, \bG)$.   Namely, if $C,\,C': G \to \bG$ are such 
$2$-functors, then the monoidal structure of  the product $C\ot C'$ is defined as above and the structural associativity $2$-cells are given by
\cite[diagram (2.78)]{DN2}.  These cells involve  the  associativity $2$-cells of $C$ and $C'$ and the structure $2$-cells of $\bG$.

The group $\TwoFun(G,\, \bG)$ fits into the following exact sequence \cite[Theorem 2.38]{DN2}:
\begin{equation}
\label{exact sequence Funsym}
H^1(G,\,\pi_1(\bG)) \xrightarrow{\alpha}  H^3(G,\, \pi_2(\bG)) \xrightarrow{\beta} \TwoFun(G,\, \bG)
  \xrightarrow{\gamma}  \Fun(G,\, \pi_{\leq 1}(\bG)) \xrightarrow{\delta} \ H^4(G,\,\pi_2(\bG)).
\end{equation}
Here $\alpha$ assigns to a homomorphism $G \to \pi_1(\bG)$
the corresponding pullback of the associator of the categorical group $\pi_{\geq 1}(\bG)$ (the latter is an element of  $H^3(\pi_1(\bG),\, \pi_2(\bG))$),
$\beta$ assigns to a third cohomology class the monoidal $2$-functor structure on the trivial $2$-functor, $\gamma$ assigns
to a monoidal $2$-functor $G \to \bG $ the underlying $1$-functor to the truncation of $\bG$, and $\delta$ gives the obstruction
for a given monoidal functor to extend to a monoidal  $2$-functor.

\begin{example}
\label{G=Pic(E)}
Let $\E$ be a symmetric fusion category and let $\bG=\uuPic(\E)$ be the symmetric $2$-categorical Picard group of $\E$. 
We have  $\pi_{\leq 1}(\uuPic(\E)) = \uPic(\E)$, the symmetric categorical Picard group of $\E$, and 
$\pi_{\geq 1}(\uuPic(\E)) = \uInv(\E)$,  the symmetric categorical group of invertible objects of $\E$.
Since the associator of the latter is trivial, the exact sequence \eqref{exact sequence Funsym} becomes
\begin{equation}
\label{exact sequence Funsym for Picard}
0 \xrightarrow{\alpha}  H^3(G,\, \kk^\times) \xrightarrow{\beta} \TwoFun(G,\, \uuPic(\E))
  \xrightarrow{\gamma}  \Fun(G,\, \uPic(\E)) \xrightarrow{\delta} \ H^4(G,\,\kk^\times).
  \end{equation}
\end{example}

\begin{example}
\label{obstruction for Pic(E)}
The restriction of the obstruction map $\delta$ from Example~\ref{G=Pic(E)} to the  subgroup $H^2(G,\, \Inv(\E))$ of $\Fun(G,\, \uPic(\E))$  
is given by the composition
\begin{equation}
\label{cup composition}
H^2(G,\, \Inv(\E)) \xrightarrow{q} H^2(G,\, \mathbb{Z}/ 2\mathbb{Z})  \xrightarrow{\cup^2}  H^4(G,\, \mathbb{Z}/ 2\mathbb{Z})  \xrightarrow{\iota} H^4(G,\, \kk^\times),
\end{equation}
where $q:  \Inv(\E) \to \mathbb{Z}/ 2\mathbb{Z}$ is the quadratic homomorphism of the symmetric categorical group $\Inv(\E)$, 
$\cup^2:H^2(G,\, \mathbb{Z}/ 2\mathbb{Z})  \to  H^4(G,\, \mathbb{Z}/ 2\mathbb{Z})$   is the cup square in $H^*(G,\, \mathbb{Z}/ 2\mathbb{Z})$
(note that it is a homomorphism), 
and $\iota$ is induced by 
the inclusion of the coefficients $\mathbb{Z}/ 2\mathbb{Z}  \to \kk^\times: n\mapsto (-1)^n$.
Indeed, by \cite[Section 8.7]{ENO} (see also \cite[(8.45)]{DN2}),  the obstruction $\delta(L)$ for $L \in H^2(G,\, \Inv(\E))$ is given by 
\begin{equation}
\label{delta L}
\delta(L)(x,y,z,w) = c_{L(x,y),L(z,w)},\qquad x,y,z,w \in G,
\end{equation}
where $c$ is the braiding in $\Inv(\E)$.  Since $c_{X,Y}=(-1)^{q(X)q(Y)}$ for all $X,Y \in \Inv(\E)$, the fiormula \eqref{delta L} translates to 
\eqref{cup composition}.

For $\E=\sVect$ formula \eqref{cup composition} describing the obstruction map 
\[
H^2(G,\, \Inv(\E)) = H^2(G,\, \mathbb{Z}/ 2\mathbb{Z})   \to H^4(G,\, \kk^\times)
\] 
in terms of the cup product  appeared in \cite[Section VI.B]{ABK}.
\end{example}

\subsection{The embedding $\uuPic(\E) \hookrightarrow \uuPic(\Z(\E))$}
\label{section PIcE into PicZE}

Let $\E$ be a symmetric fusion category. The induction $2$-functor
\begin{equation}
\label{induction from Pic}
\Ind: \uuPic(\E) \to \uuPic(\Z(\E)) :\M \mapsto \Z(\E) \bt_\E \M
\end{equation}
is a monoidal $2$-embedding of categorical groups. On the level of $1$-cells this functor embeds $\Inv(\E)$ into $\Inv(\Z(\E))$.

There is an equivalence of categorical groups \cite{DN2,ENO}
\begin{equation}
\label{partial}
\partial: \pi_{\leq 1} (\uuPic(\Z(\E)) \to  \uAut^{br}(\Z(\E)) : \M \mapsto \partial(\M), 
\end{equation}
such that $\alpha_\M^+ \cong \alpha_\M ^- \circ \partial(\M)$, where $\alpha_\M ^\pm : \Z(\E) \xrightarrow{\sim} \uFun_{\Z(\E)}(\M,\,\M)$
are two equivalences defined, respectively, using the braiding of $\Z(\E)$ and its reverse.  This equivalence $\partial$ sends  $Z\in \Inv(\Z(\E))$
to $\delta(Z) \in \id_{\Z(\E)}$ defined by  $\partial(Z)_X= c_{Z,X}c_{X, Z}$ for all $X\in \Z(\E)$.

It was shown in \cite{DN2} that under the equivalence \eqref{partial},  in the  image of $\Ind$ the objects  are autoequivalences $\alpha \in  \Aut^{br}(\Z(\E)) $
such that  $\alpha|_\E\cong \id_\E$ and automorphisms of the unit object   are $\nu \in \id_{\Z(\E)}$ such that $\nu_X=1$ for all $X\in \E$.

For a monoidal $2$-functor 
\begin{equation}
\label{action M}
M: G \to \uuPic(\Z(\E)): g \mapsto \M(g)
\end{equation}
we denote $\partial(g)=\partial(\M(g))\in \Aut^{br}(\Z(\E))$ and $\partial_{g,h}: \partial(g) \circ \partial(h)
\to \partial(gh),\, g,h\in G,$ the monoidal structure on the composition functor $G \to  \Aut^{br}(\Z(\E))$. Note that $\partial_{g,h}$ is canonically identified
with an element of $\Aut(\id_{\Z(\E)}) =  \Aut(\partial(gh))$.

\begin{lemma}
\label{factor through PicE}
Suppose that the monoidal $2$-functot  \eqref{action M} is such that $\partial(g)|_\E \cong \id_\E$ and $(\partial_{g,h})|_X =\id_X$ for all $X \in \E$.
Then  \eqref{action M}  factors through the embedding \eqref{induction from Pic}:
\[
G \to \uuPic(\E) \xrightarrow{\Ind} \uuPic(\Z(\E)).
\]
\end{lemma}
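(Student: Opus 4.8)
The plan is to use the characterization, recalled just before the statement, of which objects and $1$-cells of $\uuPic(\Z(\E))$ lie in the image of $\Ind$: under the equivalence $\partial$ of \eqref{partial}, the image of $\Ind$ on objects consists of those $\alpha\in\Aut^{br}(\Z(\E))$ with $\alpha|_\E\cong\id_\E$, and on $1$-automorphisms of the unit it consists of those $\nu\in\Aut(\id_{\Z(\E)})$ with $\nu_X=1$ for all $X\in\E$. So the strategy is to run the data of the monoidal $2$-functor \eqref{action M} through $\partial$ and check, level by level, that everything lands in these subgroups, then invoke that $\Ind$ is a full monoidal $2$-embedding (it is a monoidal $2$-embedding by the previous subsection) to lift the $2$-functor.

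First I would handle the underlying $1$-functor $G\to\pi_{\leq 1}(\uuPic(\Z(\E)))$. Composing with $\partial$ gives $g\mapsto\partial(g)$ together with the monoidal structure $\partial_{g,h}$. By hypothesis $\partial(g)|_\E\cong\id_\E$, so each object $\M(g)$ lies in the image of $\Ind$; and $\partial_{g,h}$, viewed as an element of $\Aut(\id_{\Z(\E)})$, satisfies $(\partial_{g,h})|_X=\id_X$ for all $X\in\E$ by hypothesis, so each $\M(g)$-to-$\M(gh)$ structure $1$-cell also lies in the image. Since $\Ind$ is a full embedding on $\pi_{\leq 1}$, the $1$-functor underlying \eqref{action M} factors (up to coherent isomorphism) as $G\to\pi_{\leq 1}(\uuPic(\E))\xrightarrow{\Ind}\pi_{\leq 1}(\uuPic(\Z(\E)))$. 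Second, I would promote this to the full $2$-functor: the remaining datum of a monoidal $2$-functor is the associativity $2$-cell, which is a $2$-automorphism of a composite $1$-cell, hence (after translating through $\partial$) an element of $\Aut(\id_{\Z(\E)})=\pi_2(\uuPic(\Z(\E)))$ attached to each triple $(g,h,k)$. One shows these $2$-cells automatically lie in $\pi_2(\uuPic(\E))$: the coherence (pentagon-type) identity the $\partial_{g,h}$ satisfy expresses the associativity $2$-cell at $(g,h,k)$ in terms of the $\partial_{\bullet,\bullet}$'s, all of which restrict to the identity on $\E$; alternatively, since $\pi_2(\uuPic(\E))=\kk^\times=\pi_2(\uuPic(\Z(\E)))$ and $\Ind$ induces an isomorphism on $\pi_2$, there is nothing to check at this level beyond compatibility. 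Either way, the obstruction-theoretic description of $\TwoFun$ via \eqref{exact sequence Funsym} applied to both $\uuPic(\E)$ and $\uuPic(\Z(\E))$, together with the fact that $\Ind$ induces compatible maps of those exact sequences, lets me conclude the factorization of $2$-functors from the already-established factorization of $1$-functors.

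The main obstacle I expect is purely bookkeeping at the level of $2$-cells: I must make sure the associativity $2$-cell of the factored $2$-functor $G\to\uuPic(\E)$ is genuinely well-defined (not just its image in $\uuPic(\Z(\E))$), i.e. that the natural transformation it names actually has source and target $1$-cells living in $\uuPic(\E)$ — this is where the hypothesis $(\partial_{g,h})|_X=\id_X$ is doing real work, as it guarantees the intermediate $1$-cells appearing in the composite are in the image of the full embedding $\Ind$. Once the source/target issue is resolved, the monoidal coherence axioms for the factored $2$-functor follow from those of \eqref{action M} by applying the faithful $2$-functor $\Ind$, since a diagram of $2$-cells commutes upstairs iff it commutes downstairs. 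I would not grind through the explicit form of the associativity $2$-cells (diagram (2.78) of \cite{DN2}); it suffices to note they are built from $\partial_{\bullet,\bullet}$ and the structure $2$-cells of $\uuPic(\Z(\E))$, all of which restrict trivially on $\E$ under the stated hypotheses.
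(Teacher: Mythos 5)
Your proposal is correct and follows essentially the same route as the paper, whose proof is simply the observation that the hypotheses place the objects and structure $1$-cells of \eqref{action M} in the image of $\Ind$ as described via $\partial$ at the end of Section~\ref{section PIcE into PicZE}; your additional checks at the level of $2$-cells (using $\pi_2(\uuPic(\E))=\kk^\times=\pi_2(\uuPic(\Z(\E)))$ and the fact that $\Ind$ is a monoidal $2$-embedding) are exactly the details the paper leaves implicit.
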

\begin{proof}
This follows from the above description of the image of $\Ind$.
\end{proof}

\subsection{The group of central extensions of a symmetric fusion category}

Recall that  for  an \'etale algebra $A$ in $\Z(\C)$, where $\C$ is a fusion category,  the category $\C_A$ of $A$-modules in $\C$ is a fusion category.
If $A$ is an \'etale algebra in $\Z_{sym}(\B)$, where $\B$ is a braided fusion category,  then  $\B_A$ is a 
braided fusion category. The tensor product over a symmetric fusion category $\E$ is a special case of this construction. Indeed, the tensor product
$\ot: \E \bt \E \to \E$ is a braided tensor functor. Let $I: \E \to  \E \bt \E $ be its adjoint, then $A:=I(1)$ is a canonical \'etale algebra in $\E\bt \E$.
If $\E \hookrightarrow \C_1,\, \E \hookrightarrow \C_2$ are central inclusions of $\E$ into fusion categories $\C_1,\, \C_2$
then $\C_1 \bt_\E \C_2 = (\C_1 \bt \C_2)_A$ is a fusion category.
If $\E \hookrightarrow \B_1,\, \E \hookrightarrow \B_2$ are inclusions into symmetric centers of braided fusion categories $\B_1,\, \B_2$
then $\B_1 \bt_\E \B_2 = (\B_1 \bt \B_2)_A$ is a braided fusion category.

\begin{lemma}
\label{A-modules}
Let $\B \subset \C$ be a central extension of a braided fusion category $\B$ and let $A$ be an etale algebra in $\Z_{sym}(\B)$.
Then $\B_A \subset \C_A$ is a central extension.
\end{lemma}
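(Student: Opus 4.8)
The plan is to verify the two conditions defining a central extension for the inclusion $\B_A \subset \C_A$: first, that $\B_A$ is a fusion subcategory of $\C_A$ with the appropriate grading, and second, that there is a braided embedding $\B_A \hookrightarrow \Z(\C_A)$. Since $\B \subset \C$ is a $G$-graded central extension, we have $\C = \bigoplus_g \C_g$ with $\C_e = \B$, together with an embedding $\B \hookrightarrow \Z(\C)$. The algebra $A \in \Z_{sym}(\B) \subset \B \subset \C$ lies in the trivial component, so $\C_A$ inherits a $G$-grading with $(\C_A)_g = (\C_g)_A$ and $(\C_A)_e = \B_A$; this is standard (e.g. \cite{EGNO}) once one checks that the grading on $\C$ is compatible with the $A$-action, which is immediate because $A$ is supported on $\C_e$.

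Next I would produce the central structure. The embedding $\B \hookrightarrow \Z(\C)$ means every object of $\B$ comes equipped with a half-braiding with all of $\C$. I would use the fact, recalled just above in the excerpt, that for an \'etale algebra $A$ in $\Z_{sym}(\B)$ the category $\B_A$ is again braided; moreover the half-braidings defining the central structure $\B \hookrightarrow \Z(\C)$ restrict $A$ into $\Z_{sym}$ of the image, so they descend to $A$-modules. Concretely, given an $A$-module $(V,\rho)$ in $\B$ and an $A$-module $(W,\sigma)$ in $\C$, the half-braiding $c_{V,W}\colon V\ot W \to W\ot V$ of $V$ as an object of $\Z(\C)$ is a morphism of $A$-modules precisely because $A$ is central-symmetric in $\B$ (so $c$ intertwines $\rho$ and the action), hence it induces a half-braiding on $V$ relative to $W$ in $\C_A$. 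One then checks naturality and the hexagon axioms descend, since they hold for $c$ in $\C$ and passing to $A$-modules is a (braided) tensor functor on local modules and at worst a tensor functor here; this yields a braided functor $\B_A \to \Z(\C_A)$, and it is an embedding because it is so before taking modules and the free-module functor is faithful on the relevant Hom-spaces.

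The step I expect to be the main obstacle is checking that these descended half-braidings genuinely land in the Drinfeld center $\Z(\C_A)$ — i.e. that the half-braiding of $V\in\B_A$ is defined coherently against \emph{all} of $\C_A$, not merely against free modules, and that the required compatibility with the tensor product of $A$-modules (which uses the braiding to identify $\ot_A$) holds. The essential input is exactly the hypothesis $A \in \Z_{sym}(\B)$: this guarantees that $A$, viewed through the central embedding, is transparent to the half-braidings coming from $\B$, so the half-braidings are morphisms of $A$-bimodules and pass to the balanced tensor product. I would phrase this cleanly by invoking the general principle that if $A$ is \'etale in $\Z_{sym}$ of a braided category sitting inside $\Z(\C)$ then module-category constructions are functorial and preserve centrality; the rest is a routine diagram chase using the hexagons for the half-braiding in $\Z(\C)$ and the definition of $\ot_A$. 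Finally I would note that Frobenius--Perron dimensions behave as expected ($\FPdim(\C_A) = \FPdim(\C)/\FPdim(A)$ componentwise), so no degeneracy is introduced, completing the verification that $\B_A\subset\C_A$ is a central extension.
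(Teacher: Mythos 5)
Your argument is correct and is essentially the paper's proof: the paper disposes of the lemma in one line by observing that the half-braiding between objects of $\B$ and $\C$ induces one between objects of $\B_A$ and $\C_A$, which is exactly the descent-of-half-braidings argument you carry out in detail (with the hypothesis $A\in\Z_{sym}(\B)$ supplying the transparency needed to pass to $\ot_A$). Your extra remarks on the grading and Frobenius--Perron dimensions are harmless elaborations of what the paper treats as immediate.
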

\begin{proof}
This is straightforward since the half-braiding between objects of $\B$ and $\C$ induces the one between objects of $\B_A$ and $\C_A$.
\end{proof}


Let $\uuPic(\E)$ denote the braided $2$-categorical Picard group  of $\E$ and let
$\FUN(G,\, \uuPic(\E))$  denote the $2$-categorical group  of monoidal functors from $G$ to $\uuPic(\E)$.

It was observed in \cite{DN2} that the $2$-groupoid  $\uuExctr(G,\, \E)$ of  central $G$-graded extensions of $\E$ is
a braided $2$-categorical group. The tensor product of $G$-graded central  extensions $\E\hookrightarrow\C^1$ and 
$\E\hookrightarrow\C^2$ is defined as follows. Let $A$ be the canonical  \'etale algebra in $\E\bt \E$ defined above. 
 Lemma~\ref{A-modules} applies to the central  $G$-graded extension $\E \bt \E
\hookrightarrow  \bigoplus_{g \in G} \, \C^1_g \bt \C^2_g$, so we obtain  a central $G$-graded  extension
\begin{equation}
\label{product of G-crossed}
\E \cong (\E\bt \E)_A \hookrightarrow  \bigoplus_{g \in G} \, (\C^1_g \bt \C^2_g)_A = \bigoplus_{g \in G} \, \C^1_g \bt _\E\C^2_g 
\end{equation}
which is the product of  extensions $\E\hookrightarrow\C^1$ and  $\E\hookrightarrow\C^2$.
We will denote this product by $\E\hookrightarrow \C_1\boxdot \C_2$.


\begin{theorem}
\label{2Fun =Ex thm}
There is a monoidal $2$-equivalence of $2$-categorical groups
\begin{equation}
\label{2Fun =Ex}
\FUN(G,\, \uuPic(\E)) \cong \uuExctr(G,\, \E).
\end{equation}
\end{theorem}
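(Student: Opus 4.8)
The plan is to exhibit the equivalence \eqref{2Fun =Ex} by constructing a $2$-functor in each direction and checking they are mutually quasi-inverse, with the monoidal structures matching up. In one direction, I start from the known description of \emph{central} $G$-graded extensions: by \cite{DN2, ENO}, the $2$-groupoid of central $G$-graded extensions of a braided fusion category $\B$ is equivalent to the $2$-groupoid of monoidal $2$-functors $G \to \uuPic(\B)$. Applying this with $\B = \E$, every central extension $\E \hookrightarrow \C = \bigoplus_g \C_g$ corresponds to a monoidal $2$-functor $M \colon G \to \uuPic(\E)$, where $M(g) = \C_g$ viewed as an invertible $\E$-module category. The content I must add is that this correspondence is \emph{monoidal} for the product $\boxdot$ on central extensions defined in \eqref{product of G-crossed} and the product $\ot$ on $\FUN(G,\,\uuPic(\E))$ defined via the braiding of $\uuPic(\E)$ (i.e., the braided $2$-categorical Picard group structure). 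So the first step is to recall from \cite{DN2, ENO} the equivalence at the level of underlying ($2$-)groupoids, and then the second — and main — step is to trace through how the tensor product $\boxdot$ of extensions is computed in terms of the associated $2$-functors.

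For the second step I would argue as follows. Given $\E \hookrightarrow \C^1$ and $\E \hookrightarrow \C^2$ with associated $2$-functors $M^1, M^2 \colon G \to \uuPic(\E)$, the product extension has homogeneous components $\C^1_g \bt_\E \C^2_g = (\C^1_g \bt \C^2_g)_A$ where $A = I(\be)$ is the canonical étale algebra in $\E \bt \E$. On the level of module categories, $(\C^1_g \bt \C^2_g)_A$ is precisely the relative Deligne product $\C^1_g \bt_\E \C^2_g$, which is the tensor product $M^1(g) \bt_\E M^2(g)$ in $\uuPic(\E)$ — this is the \emph{object part} of the product in $\FUN(G,\,\uuPic(\E))$. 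The monoidal (i.e., $G$-graded) structure on the product extension, namely the isomorphisms $(\C^1_g \bt_\E \C^2_g) \ot (\C^1_h \bt_\E \C^2_h) \xrightarrow{\sim} \C^1_{gh} \bt_\E \C^2_{gh}$, is built by first reshuffling the four tensor factors $\C^1_g \bt \C^2_g \bt \C^1_h \bt \C^2_h \to \C^1_g \bt \C^1_h \bt \C^2_g \bt \C^2_h$ — and the reshuffle that matters is moving $\C^2_g$ past $\C^1_h$, which since these now live over the center requires the half-braiding, i.e., exactly the braiding $B$ in $\uuPic(\E)$ — and then applying $M^1_{g,h} \ot M^2_{g,h}$. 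This is verbatim the definition \eqref{tildeM} of the product in $\uFun(G,\,\mathcal{G})$, lifted to $2$-functors as in \cite[Section 2.8]{DN2}. So the matching of products comes down to identifying the half-braiding appearing in the $A$-module reshuffle with the braiding of the braided $2$-categorical Picard group, which is exactly how the latter is constructed in \cite{DN2}.

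The remaining steps are bookkeeping: one checks that the associativity and unit $2$-cells of the extension correspond to those of the $2$-functor (using that the unit of $\boxdot$ is $\E \hookrightarrow \Z(\E)$, corresponding to the trivial $2$-functor $G \to \uuPic(\E)$), and that the higher coherence $2$- and $3$-cells match; all of this is forced once the underlying equivalence and the object/morphism-level compatibility are established, by \cite[Section 2.8]{DN2}. One should also verify that the inverse $2$-functor — sending a $2$-functor $M$ to the extension $\bigoplus_g \M(g)$ with its canonical $G$-crossed braided structure — is genuinely inverse to the first assignment, but this is again part of the cited equivalence from \cite{DN2, ENO}; what is new here is only that this inverse is monoidal, which follows by the same half-braiding computation run backwards.

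I expect the main obstacle to be the precise identification in the second step: carefully setting up the relative Deligne product of $G$-graded module categories as a $G$-graded central extension and verifying that the reshuffling isomorphism in the definition of its grading is literally the braiding of $\uuPic(\E)$ as defined in \cite{DN2}, including checking this at the level of the associativity $2$-cells (where one needs \cite[diagram (2.78)]{DN2}) and not merely on objects. Everything else is either a direct appeal to the classification of central extensions in \cite{DN2, ENO} or formal coherence-level verification.
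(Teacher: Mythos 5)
Your proposal follows essentially the same route as the paper: it invokes the known $2$-equivalence between central $G$-graded extensions and monoidal $2$-functors $G\to\uuPic(\E)$ from \cite{ENO, DN2}, and then checks that the tensor product \eqref{product of G-crossed} of extensions matches the product \eqref{tildeM} on $2$-functors, braiding/associativity $2$-cells included. The paper records this matching as immediate since the two products are given by the very same formulas, so your more detailed verification is just an expanded version of the same argument.
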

\begin{proof}
The  $2$-equivalence is established for {\em braided} $\E$ in \cite[Theorem 7.12]{ENO} (see also \cite[Theorem 8.13]{DN2}). 
Namely, a monoidal $2$-functor $G \to \Pic(\E) : g \mapsto \C_g$ gives rise to a $G$-graded central extension $\bigoplus_{g\in G}\, \C_g$.
For symmetric $\E$,  the monoidal structure of this $2$-equivalence  is evident since the tensor products in $\FUN(G,\, \uuPic(\E))$
and $ \uuExctr(G,\, \E)$ are defined by the very same formulas  (cf.\ \eqref{tildeM} and \eqref{product of G-crossed}) and have the
same associativity $2$-cells.
%
\end{proof}

\subsection{The center of a central extension of a symmetric fusion category}
\label{the center of C/E section}

Let $\E$ be a symmetric fusion category and let
\begin{equation}
\label{displayed extension of E}
\C =\bigoplus_{g\in G}\, \C_g,\qquad \C_e=\E,
\end{equation} 
be its central $G$-graded extension. It was shown in \cite{GNN} that $\Z(\C)$ is equivalent to
a $G$-equivariantization of the relative center $\Z_\E(\C)$.
The latter is  equivalent to the fusion category $\Z(\E) \bt_\E \C$. It is a central $G$-graded extension of $\Z(\E)$  corresponding to
the following monoidal $2$-functor 
\begin{equation}
\label{factor through Pic E}
G \to \uPic(\E) \xrightarrow{Ind}  \uPic(\Z(\E)),
\end{equation}
where the first functor corresponds to the central extension \eqref{displayed extension of E} and the second is the induction  \eqref{induction from Pic}.
The action of $G$ on $\Z(\E)$ is obtained by composing \eqref{factor through Pic E} with the canonical monoidal equivalence
$\uPic(\Z(\E)) \cong \uAut^{br}(\Z(\E))$. It follows from \cite{DN2} (see Section~\ref{sect Picard groups}) that this action restricts to the trivial action on 
$\E \subset \Z(\E)$. Therefore, $\Z(\C) = (\Z(\E) \bt_\E \C)^G$ is a minimal extension of the symmetric fusion category $\E^G = \Rep(G)\bt \E$.

\begin{proposition}
\label{Ext to Mext prop}
The assignment
\begin{equation}
\label{Ext to Mext}
\uuExctr(G,\, \E) \to \uuMext(\Rep(G)\bt \E) : \C \mapsto \Z(\C)
\end{equation}
is a monoidal $2$-functor.
\end{proposition}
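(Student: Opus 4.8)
The plan is to recognize $\C\mapsto\Z(\C)$ as the composite of two constructions, each of which is a monoidal $2$-functor for reasons already available, and then to identify the resulting tensor structure. Recall from Section~\ref{the center of C/E section} (following \cite{GNN}) that $\Z(\C)\cong(\Z_\E(\C))^G$, where $\Z_\E(\C)=\Z(\E)\bt_\E\C$ is the central $G$-graded extension of $\Z(\E)$ classified, under Theorem~\ref{2Fun =Ex thm}, by the composite $G\to\uPic(\E)\xrightarrow{\Ind}\uPic(\Z(\E))$ of \eqref{factor through Pic E}, and where the $G$-action used in the equivariantization is trivial on $\E\subset\Z(\E)$. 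Thus $\C\mapsto\Z(\C)$ factors as
\[
\uuExctr(G,\,\E)\ \xrightarrow{\ \C\,\mapsto\,\Z_\E(\C)\ }\ \uuExctr(G,\,\Z(\E))\ \xrightarrow{\ (-)^G\ }\ \uuMext(\Rep(G)\bt\E),
\]
the last arrow landing in $\uuMext(\Rep(G)\bt\E)$ precisely by the discussion of Section~\ref{the center of C/E section}. The first arrow is a monoidal $2$-functor because it is the post-composition of the classifying $2$-functor $G\to\uuPic(\E)$ with the monoidal $2$-embedding $\Ind$; the compatibility of its tensor structure with that of \eqref{tildeM} follows as in the proof of Theorem~\ref{2Fun =Ex thm}, and concretely amounts to the base change identity
\[
\Z(\E)\bt_\E\bigl(\C^1\boxdot\C^2\bigr)\ \cong\ \bigl(\Z(\E)\bt_\E\C^1\bigr)\boxdot\bigl(\Z(\E)\bt_\E\C^2\bigr),
\]
which one checks degree by degree on \eqref{product of G-crossed}, using that $\Z(\E)\bt_\E(-)$ commutes with passing to modules over the canonical \'etale algebra $A\in\E\bt\E$ (as in Lemma~\ref{A-modules}) and that $\Z(\E)$ is an $\E$-bimodule. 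The second arrow, $G$-equivariantization, is manifestly a $2$-functor, carrying braided equivalences and $2$-isomorphisms identical on $\E$ to braided equivalences and $2$-isomorphisms identical on $\Rep(G)\bt\E$. Hence the composite is a $2$-functor, and it remains only to equip it with a monoidal structure, i.e.\ to show that $G$-equivariantization takes the product $\boxdot$ of central $G$-extensions of $\Z(\E)$ to the product $\boxdot$ of minimal extensions of $\Rep(G)\bt\E$ (all three products being denoted $\boxdot$).

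For this, put $\D^i:=\Z_\E(\C^i)$. On the one hand $\bigl(\D^1\boxdot\D^2\bigr)^G\cong\Z(\C^1\boxdot\C^2)$ by the first arrow together with $\Z(-)\cong(\Z_\E(-))^G$. On the other hand, by the definition of the tensor product in $\uuMext(\Rep(G)\bt\E)$,
\[
\Z(\C^1)\boxdot\Z(\C^2)\ =\ \bigl((\D^1)^G\bt(\D^2)^G\bigr)^0_B\ =\ \bigl((\D^1\bt\D^2)^{G\times G}\bigr)^0_B,
\]
where $B$ is the canonical \'etale algebra of $\Rep(G)\bt\E$ inside $(\Rep(G)\bt\E)^{\bt 2}\subset(\D^1)^G\bt(\D^2)^G$, and we have used $(\D^1)^G\bt(\D^2)^G\cong(\D^1\bt\D^2)^{G\times G}$. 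The key point is the factorization $B\cong B_{\Rep(G)}\bt A$, where $A\in\E\bt\E$ is the canonical \'etale algebra governing $\boxdot$ in \eqref{product of G-crossed}, and $B_{\Rep(G)}=\O(G)$ is the canonical \'etale algebra of $\Rep(G)$ in $\Rep(G)\bt\Rep(G)$, i.e.\ the regular algebra of the diagonal $\Delta\colon G\to G\times G$. Condensing $(\D^1\bt\D^2)^{G\times G}$ by $B_{\Rep(G)}$ and then restricting to the modules that centralize the image of $\Rep(G)$ is exactly the partial de-equivariantization collapsing $G\times G$ to its diagonal $G$; carrying this out together with the condensation by $A$ — in either order — yields $\bigl(\D^1\boxdot\D^2\bigr)^{G}$. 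This produces the required equivalence $\Z(\C^1\boxdot\C^2)\xrightarrow{\ \sim\ }\Z(\C^1)\boxdot\Z(\C^2)$, which by construction is identical on $\Rep(G)\bt\E$; its coherence data are assembled canonically from those of equivariantization, of $\bt$, and of the condensation functors, exactly as the tensor structures in Section~\ref{Section Kunneth} are, so no new obstruction intervenes.

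The main obstacle is the identification asserted in the last sentences of the previous paragraph: one must track how the connected \'etale algebra $B$ in $\Z(\C^1)\bt\Z(\C^2)=\Z(\C^1\bt\C^2)$ decomposes relative to the $(G\times G)$-grading of $\C^1\bt\C^2$ and its central structure over $\E\bt\E$, and verify that condensing the $\Rep(G)$-part (and localizing) collapses this grading to its diagonal in precisely the way dictated by the $G$-equivariantization of $\D^1\boxdot\D^2$. One should also keep in mind that the image of $B$ under the forgetful functor to $\C^1\bt\C^2$ is in general non-connected, so the condensation is best phrased in terms of (local) modules inside $\Z(\C^1\bt\C^2)$ rather than as a Drinfeld center of an honest module category. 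Everything else — the $2$-functoriality, and the coherence of the monoidal structure — is formal given the constructions recalled above.
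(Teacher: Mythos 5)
Your argument is in substance the paper's own proof: the paper likewise establishes monoidality by de-equivariantizing, using $\Z(\C)\cong(\Z(\E)\bt_\E\C)^G$ from \cite{GNN} and identifying $\Z(\C^1\boxdot\C^2)_G$ with $(\Z(\C^1)\boxdot\Z(\C^2))_G$ degree by degree as $\bigoplus_{g}\bigl((\Z(\E)\bt_\E\C^1_g)\bt(\Z(\E)\bt_\E\C^2_g)\bigr)^0_A$ via local modules over the canonical \'etale algebra $A\in\E\bt\E$, and then equivariantizing; your factorization $B\cong\O(G)\bt A$ and the collapse of $G\times G$ to its diagonal is just an explicit rendering of the paper's final (equally terse) equivalence. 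The one caveat is your framing through an intermediate monoidal $2$-category $\uuExctr(G,\Z(\E))$ with a product $\boxdot$: since $\Z(\E)$ is braided but not symmetric, $\uuPic(\Z(\E))$ is not braided and the paper's $\boxdot$ on central extensions is not defined over this base, but as you in fact only use $\D^1\boxdot\D^2$ as shorthand for the degreewise product over $A$ just displayed, this does not affect the substance of the argument.
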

\begin{proof}
For a braided fusion category $\B$ containing a subcategory equivalent to $\Rep(G)$,
 let $\B_G =\B \bt_{\Rep(G)} \Vect$ denote the corresponding de-equivariantization.
Using definitions of the tensor products in $\uuExctr(G,\, \E)$ and $ \uuMext(\Rep(G)\bt \E)$ we obtain equivalences
\begin{eqnarray*}
\Z(\C^1 \boxdot \C^2)_G 
&\cong& \bigoplus_{g\in G}\, \Z(\E) \bt_\E \C^1_g  \bt_\E \C^2_g \\
&\cong& \bigoplus_{g\in G}\, (\Z(\E)\bt \Z(\E))_A^0 \bt_\E \C^1_g  \bt_\E \C^2_g \\ 
&\cong& \bigoplus_{g\in G}\, \left( (\Z(\E) \bt_\E \C^1_g)  \bt  (\Z(\E) \bt_\E  \C^2_g)  \right)_A^0 \\
&\cong& \left(  \Z(\C^1) \boxdot \Z(\C^2) \right)_G 
\end{eqnarray*}
for all $G$-graded central extensions $\C^1,\, \C^2$ of $\E$. Here $\B_A^0$ denotes the category of local $A$-modules in $\B$.
The symbol $\boxdot$ stands for the tensor product  in  both $\uuExctr$ and $\uuMext$.
Taking equivariantizations we get a canonical equivalence 
\[
\Z(\C^1 \boxdot \C^2) \cong  \Z(\C^1) \boxdot \Z(\C^2)
\]
in  $\uuMext(\Rep(G)\bt \E)$ that equips the $2$-functor  \eqref{Ext to Mext} with a canonical monoidal  structure.
\end{proof}

\subsection{The K\"unneth formula.}
\label{subs Kunneth}

Let $G,\,L$ be finite groups. 

\begin{proposition} 
There is a split short exact sequence  
\begin{equation}
\label{Kunneth formula}
\begin{split}
0\to H^3(G,\, \kk^\times)\,\oplus \,H^3(L,\, \kk^\times)\,\oplus \,\left( \Hom(G,\, \kk^\times) \otimes \Hom(L,\, \kk^\times) \right)\to 
H^3(G\times L,\, \kk^\times)\to \\
\Hom(L,\, H^2(G,\, \kk^\times))\,  \oplus \, \Hom(G,\, H^2(L,\, \kk^\times))  \to 0.
\end{split}
\end{equation}
\end{proposition}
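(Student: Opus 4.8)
The plan is to deduce \eqref{Kunneth formula} from the classical K\"unneth formula for group cohomology. (It is also the specialization of Theorem~\ref{Kunneth general} to $\E=\Rep(L)$, using $\Mext(\Rep(\Gamma))\cong H^3(\Gamma,\kk^\times)$ from Example~\ref{RepG example} together with an analysis of $\TwoFun(G,\uuPic(\Rep(L)))$ via \eqref{exact sequence Funsym for Picard} and \eqref{sequence 1}; but the direct route is shorter and self-contained, so I will take it.) The key preliminary point is that one cannot apply the K\"unneth theorem with $\kk^\times$-coefficients directly, since $\kk^\times$ is neither free nor a PID. Instead I will pass to integral coefficients: as $\kk$ is algebraically closed of characteristic $0$, the group $\kk^\times$ is divisible, and the exponential sequence $0\to\mathbb{Z}\to\kk\to\kk^\times\to 0$ together with the vanishing of $H^i(\Gamma,\kk)$ for all $i\geq 1$ and all finite $\Gamma$ yields isomorphisms $H^n(\Gamma,\kk^\times)\cong H^{n+1}(\Gamma,\mathbb{Z})$ for $n\geq 1$, natural in $\Gamma$. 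Hence it suffices to compute $H^4(G\times L,\mathbb{Z})$.

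Since $G$ and $L$ are finite, the groups $H^i(G,\mathbb{Z})$ and $H^j(L,\mathbb{Z})$ are finitely generated in every degree ($\mathbb{Z}$ in degree $0$, finite in positive degrees), so the cohomology K\"unneth theorem applies and gives a noncanonically split short exact sequence
\[
0\to \bigoplus_{i+j=4} H^i(G,\mathbb{Z})\otimes H^j(L,\mathbb{Z}) \to H^4(G\times L,\mathbb{Z}) \to \bigoplus_{i+j=5}\mathrm{Tor}\bigl(H^i(G,\mathbb{Z}),\,H^j(L,\mathbb{Z})\bigr)\to 0,
\]
the left-hand map being the cross product. I then evaluate the two outer terms using $H^0(\Gamma,\mathbb{Z})=\mathbb{Z}$, $H^1(\Gamma,\mathbb{Z})=0$, and the Bockstein identifications $H^2(\Gamma,\mathbb{Z})\cong\Hom(\Gamma,\kk^\times)$, $H^3(\Gamma,\mathbb{Z})\cong H^2(\Gamma,\kk^\times)$, $H^4(\Gamma,\mathbb{Z})\cong H^3(\Gamma,\kk^\times)$. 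In the $\otimes$-term only the bidegrees $(0,4)$, $(4,0)$, $(2,2)$ survive, producing $H^3(G,\kk^\times)\oplus H^3(L,\kk^\times)\oplus\bigl(\Hom(G,\kk^\times)\otimes\Hom(L,\kk^\times)\bigr)$. In the $\mathrm{Tor}$-term only $(2,3)$ and $(3,2)$ survive, and since for finite abelian groups $A$, $B$ the groups $\mathrm{Tor}(A,B)$, $A\otimes B$ and $\Hom(A,B)$ are all isomorphic, this term becomes $\Hom(L,H^2(G,\kk^\times))\oplus\Hom(G,H^2(L,\kk^\times))$. Transporting the sequence across $H^4(G\times L,\mathbb{Z})\cong H^3(G\times L,\kk^\times)$ then gives exactly \eqref{Kunneth formula}; concretely, the injection is inflation along the projections $G\times L\to G$, $G\times L\to L$ on the $H^3(G,\kk^\times)\oplus H^3(L,\kk^\times)$ summands and $\chi\otimes\psi\mapsto\tilde\chi\times\tilde\psi$ (cross product of integral Bocksteins) on the third summand, while the surjection is given by the two slant products $\omega\mapsto(\ell\mapsto\omega/\ell)$ and $\omega\mapsto(g\mapsto\omega/g)$.

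I do not expect a serious obstacle: the argument is classical once the Bockstein shift to $\mathbb{Z}$-coefficients has been made. The points that require care are (i) verifying the finite-type hypothesis needed for the K\"unneth theorem and the compatibility of the shift with cross products on $H^\ast(\cdot,\mathbb{Z})$, and (ii) pinning down the \emph{natural} maps realizing the sequence in the displayed form — in particular recognizing the image of the $\mathrm{Tor}$-part as the subgroup of $H^3(G\times L,\kk^\times)$ generated by cross products of Bocksteins of characters, and checking that the slant-product maps indeed land in $\Hom(L,H^2(G,\kk^\times))$ and $\Hom(G,H^2(L,\kk^\times))$. This identification of maps is the most delicate bookkeeping step, but it is routine.
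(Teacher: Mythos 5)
Your proof is correct and follows essentially the same route as the paper: shift from $\kk^\times$ to integral coefficients (the paper uses $0\to\mathbb{Z}\to\mathbb{Q}\to\mathbb{Q}/\mathbb{Z}\to 0$ together with $H^i(G,\kk^\times)\cong H^i(G,\mathbb{Q}/\mathbb{Z})$ rather than your ``exponential'' sequence, which is canonical only for $\kk=\mathbb{C}$, though your divisibility argument yields the same degree shift) and then apply the split K\"unneth sequence for integral cohomology. Your evaluation of the tensor and Tor terms and the identification $\mathrm{Tor}(A,B)\cong\Hom(A,B)$ for finite abelian groups is exactly the bookkeeping the paper leaves implicit, so there is nothing to add.
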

\begin{proof} 
Using the short exact sequence  $0\to \mathbb{Z}\to \mathbb{Q} \to \mathbb{Q}/\mathbb{Z} \to 0$, we obtain isomorphisms
\[
H^i(G,\, \kk^\times) \cong H^i(G,\, \mathbb{Q}/\mathbb{Z})\cong H^{i+1}(G,\,\mathbb{Z})
\] 
for any finite group $G$ and $i\ge 1$. Also, $H^1(G,\,\mathbb{Z})=0$. 
Therefore, the sequence in question is obtained from the usual K\"unneth formula for integral cohomology. 
\end{proof} 

Below we generalize the K\"unneth formula \eqref{Kunneth formula}. 
Namely, for a finite group $G$ and a symmetric fusion category $\E$, we explain how to compute the group  $\Mext(\Rep(G)\bt \E)$. 

\begin{theorem}
\label{Kunneth general} 
There is a group isomorphism
\begin{equation}
\Mext(\Rep(G)\bt \E)  \cong \Mext(\E) \times \TwoFun(G,\, \uuPic(\E))
\end{equation}
\end{theorem}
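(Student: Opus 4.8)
The plan is to realize $\Mext(\Rep(G)\bt\E)$ as an internal direct sum $\iota\bigl(\Mext(\E)\bigr)\oplus\ker r$ for a suitable retraction $r$, and then to identify $\ker r$ with $\TwoFun(G,\uuPic(\E))$ via the center construction. I first set up three homomorphisms. Let $\iota\colon\Mext(\E)\to\Mext(\Rep(G)\bt\E)$ send $(\E\hookrightarrow\D)$ to $(\Rep(G)\bt\E\hookrightarrow\Z(\Vec_G)\bt\D)$; since $\boxdot$ distributes over $\bt$ and $\Z(\Vec_G)$ is the unit of $\Mext(\Rep(G))$, this is a group homomorphism. Let $\Phi\colon\TwoFun(G,\uuPic(\E))\to\Mext(\Rep(G)\bt\E)$ be the homomorphism obtained on $\pi_0$ from the composite monoidal $2$-functor $\FUN(G,\uuPic(\E))\cong\uuExctr(G,\E)\to\uuMext(\Rep(G)\bt\E)$ of Theorem~\ref{2Fun =Ex thm} and Proposition~\ref{Ext to Mext prop}; explicitly it sends a central $G$-graded extension $\C$ of $\E$ to $(\Rep(G)\bt\E\hookrightarrow\Z(\C))$. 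Finally, let $r\colon\Mext(\Rep(G)\bt\E)\to\Mext(\E)$ send $(\Rep(G)\bt\E\hookrightarrow\K)$ to the neutral component $(\K_G)_e$ of the $G$-crossed braided fusion category $\K_G:=\K\bt_{\Rep(G)}\Vec$ obtained by de-equivariantizing $\K$ along the Tannakian subcategory $\Rep(G)\subset\Rep(G)\bt\E$.

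Next I would check that $r$ is a well-defined homomorphism and that $r\circ\iota=\id_{\Mext(\E)}$. Since $\K$ is non-degenerate, the centralizer $\Rep(G)'\supseteq\Rep(G)\bt\E$ has M\"uger center $\Rep(G)$ (by the double-centralizer property and $\Rep(G)\subseteq\Rep(G)'$), so $(\K_G)_e=\Rep(G)'\bt_{\Rep(G)}\Vec$ is a non-degenerate braided fusion category of dimension $\FPdim(\Rep(G)')/|G|=\FPdim(\E)^2$ containing the image of $\E$ (which survives de-equivariantization, being disjoint from $\Rep(G)$); a dimension count then gives $\E=\E'$ inside it, so $r(\K)\in\Mext(\E)$. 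Functoriality of de-equivariantization makes $r$ well defined on equivalence classes, and compatibility of de-equivariantization and of passing to the neutral component with $\boxdot$---the same \'etale-algebra and local-module bookkeeping as in the proof of Proposition~\ref{Ext to Mext prop}---makes $r$ monoidal. For $r\circ\iota=\id$, de-equivariantizing $\Z(\Vec_G)\bt\D$ along the Lagrangian $\Rep(G)\subset\Z(\Vec_G)$ yields $\Vec_G\bt\D$ with $\Vec_G$ in its standard $G$-crossed braided structure, whose neutral component is $\Vec\bt\D=\D$. Hence $\iota$ is split injective and $\Mext(\Rep(G)\bt\E)=\iota(\Mext(\E))\oplus\ker r$ as abelian groups.

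The core of the proof is that $\Phi$ is an isomorphism onto $\ker r$. That $\Phi$ lands in $\ker r$ is immediate from Section~\ref{the center of C/E section}: $\Z(\C)_G\cong\Z(\E)\bt_\E\C$, whose neutral component is $\Z(\E)\bt_\E\E=\Z(\E)$, the unit of $\Mext(\E)$. For surjectivity onto $\ker r$, take $\K$ with $r(\K)=\Z(\E)$; then $\K_G$ is a central $G$-graded extension of $\Z(\E)$, and because the copy of $\E$ in $(\K_G)_e=\Z(\E)$ comes from $\K$---on which $G$ acts trivially with trivial coherence---the induced $G$-action on $(\K_G)_e$ restricts to $\id$ on $\E$ with structural $2$-cells restricting to identities on $\E$. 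Thus Lemma~\ref{factor through PicE} applies: the classifying $2$-functor $G\to\uuPic(\Z(\E))$ of $\K_G$ factors through $\Ind\colon\uuPic(\E)\hookrightarrow\uuPic(\Z(\E))$, so $\K_G\cong\Z(\E)\bt_\E\C$ for a central $G$-graded extension $\C$ of $\E$, and re-equivariantizing (via $\Z(\C)\cong(\Z(\E)\bt_\E\C)^G$) yields $\K\cong\Z(\C)=\Phi(\C)$. For injectivity, if $\Z(\C_1)\cong\Z(\C_2)$ over $\Rep(G)\bt\E$ then de-equivariantizing along $\Rep(G)$ gives an equivalence $\Z(\E)\bt_\E\C_1\cong\Z(\E)\bt_\E\C_2$ of central $G$-graded extensions of $\Z(\E)$ that is the identity on $\E$; since $\Ind$ is a full $2$-embedding and transport along an object of the symmetric $2$-categorical group $\uuPic(\E)$ is $2$-naturally isomorphic to the identity, the $\uAut^{br}(\Z(\E);\E)$-ambiguity between ``identity on $\E$'' and ``identity on $\Z(\E)$'' acts trivially on the essential image of $\Ind$, forcing $\C_1\cong\C_2$. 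Combining this with the decomposition above, $(\D,\C)\mapsto\iota(\D)\boxdot\Phi(\C)$ is the desired isomorphism $\Mext(\E)\times\TwoFun(G,\uuPic(\E))\xrightarrow{\ \sim\ }\Mext(\Rep(G)\bt\E)$, with inverse $\K\mapsto\bigl(r(\K),\,\Phi^{-1}(\K\boxdot\iota(r(\K))^{-1})\bigr)$.

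The step I expect to be the main obstacle is the injectivity of $\Phi$: after de-equivariantization one must carefully distinguish equivalences that are the identity only on $\E$ from those that are the identity on all of $\Z(\E)$, and show that this distinction collapses on the essential image of $\Ind$. A secondary point needing genuine (if routine) care is that $r$ intertwines the two products $\boxdot$, which reduces to the same manipulation with \'etale algebras and local modules used to prove Proposition~\ref{Ext to Mext prop}.
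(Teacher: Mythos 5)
Your proposal follows essentially the same route as the paper: the same retraction $r$ (your neutral component of $\K\bt_{\Rep(G)}\Vect$ is exactly the paper's de-equivariantized centralizer $\widetilde{\C}\bt_{\Rep(G)}\Vect$), the same splitting $\D\mapsto\Z(\Rep(G))\bt\D$, and the same identification of $\ker r$ with $\TwoFun(G,\uuPic(\E))$ via Theorem~\ref{2Fun =Ex thm}, Proposition~\ref{Ext to Mext prop}, and the factorization through $\Ind$ (Lemma~\ref{factor through PicE}). The only difference is that you spell out points the paper leaves implicit, notably the injectivity of $\Phi$ and the monoidality of $r$, which is consistent with (and slightly more detailed than) the paper's argument.
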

\begin{proof}
Let $\C$ be a minimal extension of $\Rep(G)\bt \E$. Let $\widetilde{\C}$  denote the centralizer of $\Rep(G)$ in $\C$.
The de-equivariantization $\widetilde{\C} \bt_{\Rep(G)} \Vect$  is a minimal  extension of $\E$ and the assignment
\begin{equation}
\label{Mext to Mext}
\uuMext(\Rep(G)\bt \E)  \to \uuMext(\E)  : \C \mapsto \widetilde{\C} \bt_{\Rep(G)} \Vect
\end{equation}
is a monoidal $2$-functor between categorical groups.

The associated group homomorphism 
\begin{equation}
\label{Mext to Mext hom}
\Mext(\Rep(G)\bt \E)  \to \Mext(\E)
\end{equation}
is  split surjective, since for any $\D\in \Mext(\E)$ we have a minimal extension
$\Rep(G)\bt \E  \hookrightarrow \Z(\Rep(G)) \bt \D$. 

Let $K(G,\, \E)$ denote the kernel of \eqref{Mext to Mext hom}. It remains to show that  $\TwoFun(G,\, \uuPic(\E)) = K(G,\, \E)$.
Theorem~\ref{2Fun =Ex thm} combined with Proposition~\ref{Ext to Mext prop} gives an inclusion 
\[
 \TwoFun(G,\, \uuPic(\E))  \cong \Exctr(G,\, \E)\to K(G,\, \E).
\]
Let us show that it is surjective.
%
A minimal extension $\Rep(G)\bt  \E \hookrightarrow \C$  is in $K(G,\,\E)$
 if and only if its de-equivariantization $\C_G =\C \bt_{\Rep(G)} \Vect$ is a $G$-graded central extension
of $\Z(\E)$ such that the action of $G$  restricts trivially to the subcategory $\E \subset \Z(\E)$.
By \cite{DN1}, this means that the corresponding monoidal $2$-functor $G\to \uuPic(\Z(\E))$ 
is the composition of a monoidal $2$-functor $F: G\to \uuPic(\E)$  and  the induction $2$-functor \eqref{induction from Pic}.
As explained in Section~\ref{the center of C/E section}, this means that $\C \cong \Z(\A_F)$, where $\A_F$
is the central extension of $\E$ corresponding to $F$. Thus, $\C \in K(G,\,\E)$, as required.
\end{proof}

\begin{remark}
\label{DGNO characterization remark}
Recall that a $G$-{\em gauging} of a braided fusion category $\B$ is the equivariantization
of a faithful $G$-crossed braided (i.e., $G$-graded central) extension of $\B$. 
Theorem~\ref{Kunneth general}, in particular, characterizes the centers of central $G$-extensions of a symmetric
fusion category $\E$ as $G$-gaugings of $\Z(\E)$ in which  the associated action of $G$ on $\E\subset \Z(\E)$ is trivial.
For $\E=\Vect$, one recovers from this result the classification of twisted group doubles from  \cite[Theorem 4.64]{DGNO1}.
\end{remark}

\begin{example}
\label{recovering Kunneth}
Let $L$ be a finite group  and set $\E =\Rep(L)$  in Theorem~\ref{Kunneth general}.  We recover the K\"unneth formula \eqref{Kunneth formula}
as follows.  In this case, $\Pic(\E)= H^2(L,\,\kk^\times)$  and sequence~\eqref{sequence 1} (with $\mathcal{G}=\uPic(\E)$) splits:
\begin{equation}
\label{UCT}
\Fun(G,\, \uPic(\E)) \cong \Hom(G,\, H^2(L,\,\kk^\times)) \oplus H^2(G,\, \widehat{L}),
\end{equation}
where $\widehat{L} =\Hom(L,\, \kk^\times)$. 
We claim that  the obstruction $\delta$ in \eqref{exact sequence Funsym} vanishes. Indeed, it suffices to check that it vanishes 
on both summands of \eqref{UCT}. Vanishing on the first one  follows for the existence of a Schur covering group $L^*$ of $L$ 
\cite[Section 2.1]{K}, since $\Rep(L^*) $ is a faithful $H^2(L,\,\kk^\times)$-graded extension of $\Rep(L)$.
Vanishing on the second one follows from 
Example~\ref{obstruction for Pic(E)}, since for the Tannakian category $\Rep(L)$ the quadratic homomorphism $q:\widehat{L}\to
\mathbb{Z}/2\mathbb{Z}$  is trivial.

Therefore, the sequence \eqref{exact sequence Funsym for Picard} becomes the following 
short  exact sequence:
\[
0 \to H^3(G,\,\kk^\times) \to \TwoFun(G,\, \uuPic(\E)) \to \Hom(G,\, H^2(L,\,\kk^\times)) \oplus H^2(G,\, \widehat{L}) \to 0.
\]
By the universal coefficient theorem, the last summand is further decomposed as
\[
H^2(G,\, \widehat{L})  \cong \Ext(G/G',\, \widehat{L}) \oplus \Hom(L,\, H^2(G,\, \kk^\times)) \cong \left( \widehat{G}\otimes \widehat{L} \right)
\oplus  \Hom(L,\, H^2(G,\, \kk^\times)),
\]
where $G'$ is the commutator subgroup of $G$.
Combining this with Theorem~\ref{Kunneth general}, we recover all the summands 
of $H^3(G\times L) \cong \Mext(\Rep(G) \bt \Rep(L))$  in the K\"unneth formula \eqref{Kunneth formula}.
\end{example}

\begin{example}
\label{recursive pointed}
We will deal with minimal extensions of {\em pointed} symmetric fusion categories in Section~\ref{Section pointed}.
For now, let us note that Theorem~\ref{Kunneth general} allows calculating of
their  groups of minimal extensions  inductively as follows.
Let $r(\E)$ denote the finite rank of the group $\Inv(\E)$, i.e., the minimal number of its generators. For $r(\E)=1$,
the group $\Mext(\E)$ will be computed in Section~\ref{cyclic 2-group}. When $r(\E)>1$ we can write
\begin{equation}
\E \cong \Rep(\mathbb{Z}_N) \bt \E_1,
\end{equation}
where $r(\E_1) =r(\E)-1$. By Theorem~\ref{Kunneth general},
\[
\Mext(\E) \cong \Mext(\E_1) \oplus \TwoFun(\mathbb{Z}_N,\, \uPic(\E_1)).
\]
The last summand is computed as follows. Since $H^4(\mathbb{Z}_N,\, \kk^\times) =0$
and $H^2(\mathbb{Z}_N,\,\Inv(\E_!)) =\Ext(\mathbb{Z}_N,\,\Inv(\E_1))$,  sequences \eqref{sequence 1} and \eqref{exact sequence Funsym for Picard}  become
\[
0 \to \Ext(\mathbb{Z}_N,\,\Inv(\E_1))  \to  \Fun(\mathbb{Z}_N,\, \uPic(\E_1) ) \to \Hom(\mathbb{Z}_N, \Pic(\E_1))\to 0
\]
and 
\[
0 \to H^3(\mathbb{Z}_N,\,\kk^\times) \to  \TwoFun(\mathbb{Z}_N,\, \uPic(\E_1)) \to  \Fun(\mathbb{Z}_N,\, \uPic(\E_1) )  \to 0.
\]
Thus, the direct complement of $\Mext(\E_1)$ in  $\Mext(\E)$ has a filtration with factors
\begin{equation}
\label{factors of 2Fun}
H^3(\mathbb{Z}_N,\,\kk^\times)  \cong \mathbb{Z}_N,\,\quad \Ext(\mathbb{Z}_N,\,\Inv(\E_1)), \quad \text{and} \quad  \Hom(\mathbb{Z}_N, \Pic(\E_1)).
\end{equation}
We will see in Section~\ref{Z2Z2 example} that the group 
$\TwoFun(\mathbb{Z}_N, \uPic(\E_1))$ is not, in  general, a direct sum of factors \eqref{factors of 2Fun}.
\end{example}

\begin{remark}
\label{E1=sVect}
Let $\E =\Rep(G) \bt \sVect$. Since $\Mext(\sVect)  \cong  \mathbb{Z}_{16}$,  Theorem~\ref{Kunneth general} gives
\begin{equation}
\Mext( \Rep(G) \bt \sVect)  \cong  \mathbb{Z}_{16} \oplus \TwoFun(G,\, \uPic(\sVect)) .
\end{equation}
%

We have $\Inv(\sVect) \cong \mathbb{Z}_2$ and $\Pic(\sVect) \cong \mathbb{Z}_2$, so it follows from exact sequences  \eqref{sequence 1}
and \eqref{exact sequence Funsym for Picard} that the group $\TwoFun(G,\, \uPic(\sVect))$ has  a filtration with factors
\begin{equation}
\label{factors of 2Fun for sVect}
H^3(G,\,\kk^\times),\,\quad  \Ker\left( H^2(G,\,\mathbb{Z}_2)\xrightarrow{\delta} H^4(G,\, \kk^\times) \right),  \quad \text{and} \quad 
 H^1(G, \mathbb{Z}_2)\,
\end{equation}
where $\delta$ is the obstruction map \eqref{cup composition}. Explicit formulas describing the product in this group are given
in \cite{ABK}.
\end{remark}

\begin{remark}
\label{physicists}
Let $\E=\Rep(\tilde{G},\,t)$ be a general symmetric fusion category, where  the group $\tilde{G}$ fits into a  (not necessarily split)
short exact sequence
\[
1\to \mathbb{Z}_2 = \langle t \rangle \to \tilde{G} \to G \to 1.
\]
In this case,  there is a parameterization of $\Mext(\E)$ by torsors over the cohomology
groups  listed in \eqref{factors of 2Fun for sVect}, see \cite[Section VII.C]{ABK}  and \cite[Table I]{BCHM}. These papers 
also contain explicit formulas for products of minimal extensions  (i.e.,  symmetric invertible fermionic topological phases)
in terms of this parameterization.
\end{remark}

\section{The group of minimal extensions of a pointed  symmetric fusion category}
\label{Section pointed}

\subsection{A canonical grading on  a minimal extension}

Let $A$ be a finite group and let $\E=\Rep(A,\,t)$ be a  symmetric fusion category. 



\begin{proposition}
\label{min ext grading}
Let  $\E \hookrightarrow \C$ be a minimal non-degenerate extension. Then $\C$  is faithfully $A$-graded:
\begin{equation}
\label{the grading}
\C =\bigoplus_{x\in A}\, \C_x, \qquad \C_e=\E,
\end{equation} 
where
$\C_x=\{  X \in \C \mid c_{X,V}c_{V,X} = x|_V \ot \id_X \,\, \text{for all} \,\, V \in \E   \}$, $x\in A$.

Conversely, any $A$-graded braided extension of this form is a minimal non-degenerate extension of $\E$. Two such minimal
extensions are equivalent if  and only if they are equivalent as $A$-graded braided extensions of $\E$.
\end{proposition}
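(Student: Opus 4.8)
The plan is to extract the grading directly from the restriction of the braiding of $\C$ to $\E$, record its properties, and then observe that it is canonically determined by the braided embedding $\E\hookrightarrow\C$, which settles the last two assertions at once.

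\emph{Constructing the grading.} Since $\E=\Rep(A,\,t)$ is pointed, $\Inv(\E)$ is canonically the character group $\widehat{A}$. For a simple $X\in\C$ and an invertible $g\in\Inv(\E)$ the object $g\ot X$ is simple, so the monodromy $c_{X,g}\circ c_{g,X}$ is a scalar $\phi_X(g)\in\kk^\times$; a routine computation with the hexagon axioms (writing $c_{X,g\ot h}\circ c_{g\ot h,X}$ in terms of the monodromies with $g$ and with $h$) shows that $\phi_X\colon\Inv(\E)\to\kk^\times$ is a homomorphism, hence an element $\deg(X)$ of $\widehat{\Inv(\E)}=\widehat{\widehat{A}}\cong A$. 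Because $\E$ is pointed, every $V\in\E$ is a direct sum of invertibles, and additivity of the monodromy in $V$ gives $c_{X,V}\circ c_{V,X}=\deg(X)|_V\ot\id_X$, so $X\in\C_{\deg(X)}$. Thus every simple of $\C$ is homogeneous, $\C=\bigoplus_{x\in A}\C_x$, and a parallel computation shows $\C_x\ot\C_y\subseteq\C_{xy}$, so this is a grading of fusion categories; by construction $\C_e$ is the centralizer $\E'$ of $\E$ in $\C$, which equals $\E$ since the extension is minimal non-degenerate. Finally, the support $S\subseteq A$ of the grading is a subgroup, and every nonzero homogeneous component of a graded fusion category has the same Frobenius--Perron dimension as the trivial one \cite{EGNO}, so $\FPdim(\C)=|S|\,\FPdim(\E)=|S|\,|A|$; comparing with $\FPdim(\C)=\FPdim(\E)^2=|A|^2$ gives $S=A$, i.e.\ the grading is faithful.

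\emph{The converse.} Let $\C=\bigoplus_{x\in A}\C_x$ be a faithfully $A$-graded braided fusion category with $\C_e=\E$ whose homogeneous components are the monodromy eigenspaces displayed in \eqref{the grading}. That condition says exactly that $\E'=\C_e=\E$, so it remains only to check that $\C$ is non-degenerate. Any object $Y$ centralizing all of $\C$ in particular centralizes $\E=\C_e$, hence $Y\in\E'=\C_e=\E$; viewing $Y$ as a representation of $A$, the requirement that $Y$ centralize every $\C_x$ — all of which are nonzero, by faithfulness — reads $x|_Y=\id_Y$ for all $x\in A$, i.e.\ $A$ acts trivially on $Y$, so $Y$ is a direct sum of copies of the unit object. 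Hence the symmetric center of $\C$ is trivial, $\C$ is non-degenerate, and $\E\hookrightarrow\C$ is a minimal non-degenerate extension.

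\emph{Equivalences and the main obstacle.} One implication of the last assertion is immediate, since an equivalence of $A$-graded braided extensions of $\E$ is in particular an equivalence of minimal extensions. Conversely, if $F\colon\C_1\xrightarrow{\sim}\C_2$ is a braided equivalence restricting to the identity on $\E$, then, as braided functors intertwine monodromies and $F$ fixes $\E$, for $X\in(\C_1)_x$ and $V\in\E$ we get $c_{FX,V}\circ c_{V,FX}=F\!\left(c_{X,V}\circ c_{V,X}\right)=F(x|_V\ot\id_X)=x|_V\ot\id_{FX}$, so $FX\in(\C_2)_x$ and $F$ is an equivalence of $A$-graded braided extensions of $\E$. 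I expect the only real difficulty to lie in the first step: showing that the monodromy of a simple object with $\E$ is always ``inner'', i.e.\ implemented by the action of an element of $A$. This is precisely where the pointedness of $\E$ (so that $\Inv(\E)=\widehat{A}$) and Pontryagin duality are used, and is the reason the statement is restricted to pointed $\E$; once the grading is available, the dimension count for faithfulness and the non-degeneracy argument for the converse are routine.
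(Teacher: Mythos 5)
Your proof is correct and takes essentially the same route as the paper: the grading is extracted from the monodromy characters of simple objects against the invertibles of $\E$ (hexagon axioms giving multiplicativity), the converse is proved by showing the symmetric center of $\C$ is trivial using faithfulness, and an equivalence of minimal extensions preserves squared braidings and hence the grading. The only cosmetic difference is that you obtain faithfulness from a Frobenius--Perron dimension count over the support subgroup, whereas the paper deduces it directly from non-degeneracy of $\C$; both arguments are fine.
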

\begin{proof}
It follows from \cite[Section 3.4]{DGNO2} that $\E$-module components of $\C$ are parameterized by characters of $K_0(\E)$,
i.e., by elements of $A$. Namely,  the  squared braiding   of a simple object $X\in \C$ with simple objects of $\phi \in \E$,
where $\phi\in \widehat{A} = \Hom(A,\, \kk^\times)$ determines
a character on $\widehat{A}$, i.e., an element $a_X \in A= \widehat{\widehat{A\,}} $ such that
\[
c_{\phi, X} c_{x,\phi} = \phi(a_X)\, \id_{\phi \ot X}.
\]
It follows from the hexagon axioms that the assignment $X \mapsto a_X$ determines  a grading on $\C$.
Since $\C$ is non-degenerate, we must have $\C_x\neq 0$ for all $x\in A$, i.e., the above grading  is faithful.

Conversely, we claim that an $A$-graded braided extension \eqref{the grading} is non-degenerate.
Let $\Z_{sym}(\C)$ denote the symmetric center of $\C$. For $x\neq e$, we have
$\Z_{sym}(\C)\cap \C_x=0$, since there is $V\in \E$ such that $x|_V \neq \id_V$, i.e., $c_{X,V}c_{V,X}  \neq \id_{V \ot X}$ for all non-zero $X\in \C_x$.
If $V\in \Z_{sym}(\C)\cap \E$ is a non-trivial representation of $A$, then there is $x\in A$ such that $x|_V \neq \id_V$, i.e., $V$ does not centralize $\C_x$.
Hence, $\Z_{sym}(\C)=\Vect$, i.e., $\C$ is non-degenerate. It is a minimal extension since $\FPdim(\C)=\FPdim(\E)^2$.

An equivalence of minimal extensions preserves the squared braiding and restricts to the identity on $\E$. Therefore it must preserve the
grading  \eqref{the grading}.
\end{proof}

Recall  \cite{GN} that a fusion category is {\em nilpotent} if it is obtained from $\Vect$ by a sequence of graded extensions.

\begin{corollary}
\label{nilpotent}
Let $\E$ be a pointed symmetric fusion category and let $p_1,\dots, p_n$ be distinct primes dividing $\FPdim(\E)$.
Let $\E=\E_{1} \boxtimes \cdots \bt \E_{n}$ be the Sylow decomposition of $\E$,
where  $\FPdim(\E_i)$ is a power of $p_i,\, i=1,\dots,n$. Then any minimal extension of $\E$ is nilpotent and
\begin{equation}
\label{nilpotent Mext}
\Mext(\E) = \Mext(\E_{1}) \times \cdots \times \Mext(\E_{n}).
\end{equation}
\end{corollary}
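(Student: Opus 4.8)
First I would dispose of the nilpotency claim, which is essentially immediate from Proposition~\ref{min ext grading}. By that proposition any minimal extension $\E\hookrightarrow\C$ is a faithfully $A$-graded braided fusion category with trivial component $\C_e=\E$. Since $\E=\Rep(A,\,t)$ is pointed, its underlying fusion category $\Vec_{\widehat{A}}$ is obtained from $\Vect$ by a single graded extension, hence nilpotent (cf.\ \cite{GN}); therefore $\C$ is an $A$-graded extension of a nilpotent fusion category, so it is itself obtained from $\Vect$ by a sequence of graded extensions, i.e.\ nilpotent.

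For the isomorphism \eqref{nilpotent Mext}, the plan is to isolate the ``super'' Sylow component and invoke Theorem~\ref{Kunneth general} exactly once. If $\E$ is super-Tannakian, then $t$ has order $2$, so $\langle t\rangle$ lies in the unique $2$-Sylow subgroup of $A$; relabelling so that $p_1=2$, this Sylow subgroup is $A_1$, and since the $t$-twist in the braiding of $\Rep(A,\,t)$ only depends on the $A_1$-component of an object, one obtains $\E_1=\Rep(A_1,\,t)$ together with a factorization $\E\cong\E_1\bt\Rep(B)$, where $B:=A_2\times\cdots\times A_n$ has odd order and $\Rep(B)=\E_2\bt\cdots\bt\E_n$ is Tannakian. (If $\E$ is already Tannakian this step is vacuous and one proceeds directly to the last paragraph with $A$ in place of $B$.)

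Next I would apply Theorem~\ref{Kunneth general} with $G=B$ and symmetric category $\E_1$, giving
\[
\Mext(\E)\cong\Mext(\E_1)\times\TwoFun(B,\,\uuPic(\E_1)).
\]
The key step is to show $\TwoFun(B,\,\uuPic(\E_1))\cong H^3(B,\,\kk^\times)$. By Carnovale's formula \eqref{Carnovale}, $\Pic(\E_1)$ is built out of $H^2(A_1,\,\kk^\times)$ (or its twisted form $H^2(A_1,\,t,\,\kk^\times)$, which is abstractly isomorphic to it) together with at most a $\mathbb{Z}_2$ summand; since $A_1$ is a $2$-group these are all finite $2$-groups, and likewise $\Inv(\E_1)=\widehat{A_1}$ is a $2$-group. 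As $|B|$ is odd, it follows that $\Hom(B,\,\Pic(\E_1))=0$ and $H^2(B,\,\Inv(\E_1))=0$, so the exact sequence \eqref{sequence 1} forces $\Fun(B,\,\uPic(\E_1))=0$; then the exact sequence \eqref{exact sequence Funsym for Picard} collapses to a group isomorphism $H^3(B,\,\kk^\times)\xrightarrow{\sim}\TwoFun(B,\,\uuPic(\E_1))$. By Example~\ref{RepG example} the left-hand side is $\Mext(\Rep(B))$, so $\Mext(\E)\cong\Mext(\E_1)\times\Mext(\Rep(B))$.

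Finally I would split $\Mext(\Rep(B))=H^3(B,\,\kk^\times)$ using the ordinary K\"unneth formula \eqref{Kunneth formula}: the groups $A_2,\dots,A_n$ have pairwise coprime orders, so every cross term in \eqref{Kunneth formula} vanishes, and iterating it gives $H^3(B,\,\kk^\times)\cong\prod_{i=2}^{n}H^3(A_i,\,\kk^\times)=\prod_{i=2}^{n}\Mext(\E_i)$ (using \eqref{Mext RepG} together with the fact that $\E_i=\Rep(A_i)$ is Tannakian for $i\geq2$). Combining with the previous paragraph yields \eqref{nilpotent Mext}. The one genuinely delicate ingredient in this argument is the coprimality input that kills $\Fun(B,\,\uPic(\E_1))$: it requires that the Picard group of the super $2$-Sylow component be a $2$-group, which is exactly what Carnovale's computation \eqref{Carnovale} supplies; everything else is bookkeeping with the exact sequences of Section~\ref{Section Kunneth}.
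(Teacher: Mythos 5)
Your argument is correct, but it takes a genuinely different route from the paper for the isomorphism \eqref{nilpotent Mext}. The paper's proof is essentially two lines: by Proposition~\ref{min ext grading} any minimal extension is a faithfully graded extension of the pointed category $\E$, hence nilpotent (this part you do identically), and then \cite[Theorem 6.12]{DGNO1} gives a Sylow decomposition $\C=\C_1\bt\cdots\bt\C_n$ of any nilpotent braided fusion category, so each minimal extension of $\E$ decomposes into minimal extensions $\E_i\hookrightarrow\C_i$; this produces the factorization directly at the level of extensions and exhibits the isomorphism as (the inverse of) the natural external product map. You instead split off the $2$-primary super component $\E_1$, apply Theorem~\ref{Kunneth general} once with $G=B$ of odd order, and kill $\Fun(B,\,\uPic(\E_1))$ by the coprimality argument using Carnovale's computation \eqref{Carnovale} that $\Pic(\E_1)$ and $\Inv(\E_1)$ are $2$-groups, so that \eqref{sequence 1} and \eqref{exact sequence Funsym for Picard} collapse to $\TwoFun(B,\,\uuPic(\E_1))\cong H^3(B,\,\kk^\times)=\Mext(\Rep(B))$, finishing with the classical K\"unneth formula \eqref{Kunneth formula} for the odd Tannakian part. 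The steps all check out (the vanishing $\Hom(B,\,\Pic(\E_1))=0$, $H^2(B,\,\Inv(\E_1))=0$ is exactly right, and there is no circularity since Theorem~\ref{Kunneth general} precedes the corollary), so your proof is valid; what it buys is independence from \cite[Theorem 6.12]{DGNO1}, staying entirely within the Section~\ref{Section Kunneth} machinery. What it costs is length, reliance on \eqref{Carnovale}, and the fact that it yields a priori only an abstract group isomorphism: if one wants \eqref{nilpotent Mext} to be the map induced by the external product of minimal extensions (which is how the paper's proof realizes it, and how it is implicitly used), you would need a short additional check that the splitting in Theorem~\ref{Kunneth general} and the identification of $\TwoFun(B,\,\uuPic(\E_1))$ with centers of central $B$-extensions match that product.
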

\begin{proof}
By Proposition~\ref{min ext grading}, a minimal extension of $\E$ is a graded extension of a pointed fusion category, so
it is nilpotent of nilpotency class at most $2$. It is shown in \cite[Theorem 6.12]{DGNO1} that
any nilpotent braided fusion category  $\C$ admits a Sylow decomposition $\C = \C_{1} \times \cdots \times \C_{n}$. So if  $\E \hookrightarrow \C$
is a minimal extension then $\E_{i} \hookrightarrow \C_{i}$ is a  minimal extension for all $i=1,\dots n$.
This implies the statement.
\end{proof}

\begin{remark}
\label{only p=2}
Minimal extensions of Tannakian fusion categories were classified in  \cite{LKW1} where it was shown  that $\Mext(\Rep(G))=H^3(G,\,\kk^\times)$ 
for any finite group $G$. In view of Corollary~\ref{nilpotent},  to classify extension of pointed symmetric fusion categories it remains
to determine $\Mext(\Rep(A,\, t))$, where $A$ is an Abelian $2$-group.
\end{remark}


\begin{remark}
\label{special extensions}
It follows from the description of the homogeneous components of the grading \eqref{the grading} that
the groupoid $\uMext(\Rep(A,\,t))$ is equivalent  to the groupoid of braided monoidal $2$-functors $F:A\to \uuPicbr(\Rep(A,\,t))$ such that 
the composition of group homomorphisms
\begin{equation}
\label{composition idA}
A \xrightarrow{\pi_0(F)} \Pic_{br}(\Rep(A,\,t)) =\Pic(\Rep(A,\,t)) \times A  \xrightarrow{p_A} A,
\end{equation}
equals $\id_A$. Here $p_A$ denotes the projection on $A$.
%
\end{remark}

\subsection{A canonical filtration of $\Mext(\E)$}


We say that a minimal extension $\E \hookrightarrow \C$ is {\em integral} if $\C$ is an integral
fusion category, i.e., $\FPdim(X)$ is an integer for all objects $X\in \C$.  Such  extensions are characterized
by the following property: the image of the corresponding homomorphism composition
\begin{equation}
\label{composition A to pi0} 
A  \to \Picbr(\E)\to \Pic(\E)
\end{equation}
lies in $\Pic_{int}(\E)$, see \eqref{Picint = H2 Gtk}.

An integral minimal extension $\E \hookrightarrow \C$  is {\em pointed} if  $\C$ is a pointed category. 
Equivalently,  this extension  is {\em quasi-trivial}
in the sense of \cite[Section 8.7]{DN2}, i.e., the homomorphism \eqref{composition A to pi0}  is trivial.
In this case, the  homomorphism $A\to  \Picbr(\E)$ is identified with the
identity map  $A \to \Aut_\ot(\id_\E) =A$.  The braided monoidal functor 
\begin{equation}
\label{A to Picbr}
A \to \uPicbr(\E) 
\end{equation}
is determined by an element $L\in H^2_{ab}(A,\, \widehat{A}) = \Ext(A,\, \widehat{A})$.  

Here and below
$H^*_{ab}(A,M) = H^{*+1}(K(A,2),M)$ denotes the abelian Eilenberg-Mac Lane cohomology group of $A$
with coefficients in $M$  \cite{EM1}.  A description of low dimensional abelian cohomology groups $H^n_{ab}(A,\, M),\, n \leq 4$ 
can be found, e.g.,  in \cite[Section 2.1]{DN2}, where the term ``braided cohomology" was used.

Finally, a pointed minimal extension $\E \hookrightarrow \C$ is {\em trivial}  if the monoidal 
functor \eqref{A to Picbr} is trivial. Such extensions are easy to describe explicitly as follows.
We have $\E=\Rep(A,\,t) =\C(\widehat{A},\, t)$, where $t$ is viewed as a quadratic character on $\widehat{A}$.
For any quadratic form $q:A \to \kk^\times$ define a quadratic form $h_q: A \times \widehat{A} \to \kk^\times$ by
\begin{equation}
h_q(a,\, \phi) = \langle at,\,  \phi \rangle \, q(a),\qquad a\in A,\, \phi \in \widehat{A}.
\end{equation}
It is easy to see that this form is non-degenerate and 
\begin{equation}
\label{typical trivial}
\E =\C(\widehat{A},\, t)   \hookrightarrow \M_q:=\C(A \times \widehat{A} ,\, h_q).
\end{equation}
is a typical trivial minimal extension of $\E$.

\begin{lemma}
\label{closed}
The set of trivial (respectively, pointed,  integral)  minimal extensions of $\E$ is  closed under the tensor product.
\end{lemma}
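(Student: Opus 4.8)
The plan is to verify closedness for each of the three classes using the characterizations established just above in terms of the homomorphism $A \to \Picbr(\E) = \Pic(\E) \times A$ (or equivalently the braided monoidal $2$-functor $A \to \uuPicbr(\E)$) associated to a minimal extension via Remark~\ref{special extensions}. The key observation is that, by Proposition~\ref{min ext grading} and Remark~\ref{special extensions}, the tensor product $\boxdot$ of minimal extensions of $\E$ corresponds to the tensor product in the categorical group $\uFun(A, \uPicbr(\E))$, and hence on $\pi_0$ it induces the \emph{product} of the associated homomorphisms $A \to \Picbr(\E)$. Since each class is defined by a condition on this homomorphism, it suffices to check that each condition is preserved under pointwise multiplication of homomorphisms (valued in the abelian group $\Picbr(\E)$).

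First I would treat the integral case: by \eqref{composition A to pi0}, $\E \hookrightarrow \C$ is integral iff the composite $A \to \Picbr(\E) \to \Pic(\E)$ lands in the subgroup $\Pic_{int}(\E) = H^2(A,t,\kk^\times)$. If $\C_1, \C_2$ both have this property, their product corresponds to the product homomorphism, whose image in $\Pic(\E)$ is contained in the product of the two images, hence in $\Pic_{int}(\E)$ since the latter is a subgroup. Thus the product extension is integral. Next, the pointed case: $\C$ is pointed iff the composite $A \to \Pic(\E)$ is \emph{trivial}; the trivial homomorphism is absorbing under multiplication, so if both composites are trivial so is their product, and the product extension is pointed. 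Finally, the trivial case: a pointed minimal extension is trivial iff the braided monoidal functor $A \to \uPicbr(\E)$ of \eqref{A to Picbr} is trivial, equivalently the classifying element $L \in \Ext(A, \widehat A) = H^2_{ab}(A, \widehat A)$ vanishes; since this class is additive under $\boxdot$ (the product of extensions corresponds to the sum of classifying cocycles, as in Theorem~\ref{2Fun =Ex thm} / the additivity of \eqref{tildeM}), the product of two trivial extensions has $L_1 + L_2 = 0$, hence is trivial.

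The main point to be careful about — and the step I expect to require the most attention — is justifying that the $\boxdot$-product of minimal extensions of $\E$ really does correspond, on the level of associated homomorphisms and classifying data, to the product in $\uFun(A, \uPicbr(\E))$ (respectively the sum of abelian cocycles in the trivial case). This is essentially the compatibility of the identification in Remark~\ref{special extensions} with the tensor structures, which follows by combining the definition of $\boxdot$ via local modules over the canonical étale algebra $A \in \E \bt \E$ with the computation of the tensor product in $\uuExctr$ (Theorem~\ref{2Fun =Ex thm}) and the fact that the grading components $\C_x$ are detected by the squared braiding with $\E$, which is multiplicative under $\bt$ and hence descends compatibly to $\boxdot$. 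Once this dictionary is in place, all three closedness claims are immediate from the elementary observations above that subgroups, the trivial element, and the zero class are each stable under the relevant algebraic operation.
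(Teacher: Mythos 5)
Your pivotal claim---that $\boxdot$ on minimal extensions corresponds to the tensor product in $\uFun(A,\,\uPicbr(\E))$, so that on $\pi_0$ the associated homomorphisms $A\to\Picbr(\E)$ simply multiply---cannot be literally correct. By Remark~\ref{special extensions}, a minimal extension corresponds to a braided monoidal $2$-functor $F$ with $p_A\circ\pi_0(F)=\id_A$; under the pointwise product of two such functors the second component becomes $x\mapsto x^2$, so the product functor violates this constraint, whereas the product of two minimal extensions is again a minimal extension satisfying it (for $\E=\sVect$, the square of an Ising extension still has monodromy $-1$ between its odd part and the fermion, not $+1$). Thus the group structure on $\Mext(\E)$ is \emph{not} the restriction of the product of $\uTwoFun(A,\,\uuPicbr(\E))$, and the step you flagged as the crux---``the squared braiding is multiplicative under $\bt$ and descends to $\boxdot$''---is exactly where this breaks: $\E$ is embedded into $\C^1\boxdot\C^2$ through the \'etale algebra, which identifies $V\bt\mathbf{1}$ with $\mathbf{1}\bt V$, so the monodromy of the image of $V\in\E$ with the degree-$x$ component is $x|_V$, not $x^2|_V$.

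What your argument actually needs is the weaker, component-wise statement, which is true and is what you should prove instead: the local-module description gives $(\C^1\boxdot\C^2)_x\cong \C^1_x\bt_\E\C^2_x$ (the same computation as in the proof of Proposition~\ref{Ext to Mext prop}), hence $\C\mapsto\bigl(x\mapsto[\C_x]\bigr)\in\Hom(A,\,\Pic(\E))$ is multiplicative while the grading datum stays $\id_A$, and for pointed extensions $\lambda(\C)$, the class of $0\to\widehat{A}\to\Inv(\C)\to A\to 0$, is additive because $\Inv(\C^1\boxdot\C^2)$ is the Baer sum of $\Inv(\C^1)$ and $\Inv(\C^2)$. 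With this substitute your three closure arguments (image in the subgroup $\Pic_{int}(\E)$, trivial image in $\Pic(\E)$, and $L_1+L_2=0$) do go through, so the route is salvageable, but as written the key compatibility is false. Note also that the paper's own proof bypasses classifying data entirely: since $\E$ is pointed, $\boxdot$ is a de-equivariantization by the diagonal Tannakian subcategory of $\E\bt\E$, which manifestly preserves pointedness and integrality, and the trivial case is handled by the explicit description \eqref{typical trivial} together with the verification that \eqref{H3 to Mext} is a group homomorphism---a considerably shorter argument.
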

\begin{proof}
The statement about trivial extensions follows from their explicit description \eqref{typical trivial}. Indeed, 
one can directly check that the assignment
\begin{equation}
\label{H3 to Mext}
H^3_{ab}(A,\, \kk^\times) \to \Mext(\Rep(A,\,t)) : q \mapsto \M_q
\end{equation}
is a group homomorphism.

Since $\E$ is pointed, the tensor product of its minimal extensions is obtained by taking a de-equivariantization  with respect
to the diagonal Tannakian subcategory in $\E \bt \E$.  Clearly, a de-equivariantization of a pointed (respectively, integral)
fusion category is pointed (respectively, integral). This proves the statement about pointed and integral extensions. 
\end{proof}

\begin{remark}
Homomorphism  \eqref{H3 to Mext} is, in general, not injective.
\end{remark}

Thus, we have a filtration
\begin{equation}
\label{cat filtration}
\uMext_{triv}(\E) \subset \uMext_{pt}(\E) \subset  \uMext_{int}(\E) \subset \uMext(\E),
\end{equation}
where $\uMext_{triv}(\E),\,\uMext_{pt}(\E),\,  \uMext_{int}(\E)$  denote the  categorical groups
of trivial, pointed, and integral minimal extensions of $\E$. There is a corresponding filtration
of Abelian groups
\begin{equation}
\label{filtration}
\Mext_{triv}(\E) \subset \Mext_{pt}(\E) \subset  \Mext_{int}(\E) \subset \Mext(\E),
\end{equation}
which we are going to study next. Our goal is to determine  the factors of this filtration.

\subsection{Trivial minimal extensions}

Recall that the third abelian cohomology group $H^3_{ab}(A,\,\kk^\times)$  is isomorphic to the group 
$\Quad(A,\,\kk^\times)$ of quadratic forms on $A$ with values in $\kk^\times$.

\begin{proposition}
\label{Mext triv}
$\Mext_{triv}(\E) \cong \Coker \left( H^1(A,\, \widehat{A}) \xrightarrow{\kappa^t} H^3_{ab}(A,\,\kk^\times) \right)$, where
\begin{equation}
\label{kappa}
\kappa^t: H^1(A,\, \widehat{A}) \to \Quad(A,\,\kk^\times) : Z \mapsto q_Z,\qquad   q_Z(x)= \langle xt,\, Z(x) \rangle,\, x\in A.
\end{equation}
\end{proposition}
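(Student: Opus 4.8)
The plan is to present $\Mext_{triv}(\E)$ as a quotient of $\Quad(A,\,\kk^\times)=H^3_{ab}(A,\,\kk^\times)$ and then compute the relevant kernel by hand. By the explicit description \eqref{typical trivial}, the trivial minimal extensions of $\E$ are precisely the categories $\M_q=\C(A\times\widehat{A},\,h_q)$, and, by the proof of Lemma~\ref{closed}, the map \eqref{H3 to Mext}, $q\mapsto[\M_q]$, is a group homomorphism; its image is by definition $\Mext_{triv}(\E)$, so it corestricts to a surjective homomorphism $H^3_{ab}(A,\,\kk^\times)\to\Mext_{triv}(\E)$. Thus it suffices to show that its kernel is $\Image(\kappa^t)$. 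First I would record that $\kappa^t$ is a well-defined homomorphism $H^1(A,\,\widehat{A})=\Hom(A,\,\widehat{A})\to\Quad(A,\,\kk^\times)$: for $Z\in\Hom(A,\,\widehat{A})$ the function $q_Z(x)=\langle xt,\,Z(x)\rangle$ factors as $\langle x,\,Z(x)\rangle\,\langle t,\,Z(x)\rangle$, a product of the quadratic form $x\mapsto\langle x,\,Z(x)\rangle$ (with associated bicharacter $(x,y)\mapsto\langle x,\,Z(y)\rangle\,\langle y,\,Z(x)\rangle$) and the linear character $x\mapsto\langle t,\,Z(x)\rangle$, hence quadratic; and $Z\mapsto q_Z$ is clearly additive.

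Next I would determine, more generally, when two trivial extensions $\M_q$ and $\M_{q'}$ are equivalent as minimal extensions of $\E$. Here I would use the standard dictionary between pointed braided fusion categories and pre-metric groups: a braided equivalence $\C(B,\,\beta)\xrightarrow{\sim}\C(B',\,\beta')$ amounts to an isometry $(B,\,\beta)\cong(B',\,\beta')$, and it restricts to the identity on a pointed braided subcategory exactly when the corresponding isometry restricts to the identity map on the corresponding subgroup. Since $h_q(0,\phi)=\langle t,\,\phi\rangle$ is the quadratic form $t$ on $\widehat{A}$, the category $\E=\C(\widehat{A},\,t)$ sits inside $\M_q$ as the subgroup $\{0\}\times\widehat{A}$; hence $\M_q\cong\M_{q'}$ as minimal extensions of $\E$ precisely when there is $\psi\in\Aut(A\times\widehat{A})$ with $\psi|_{\{0\}\times\widehat{A}}=\id$ and $h_{q'}\circ\psi=h_q$.

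Then I would run the computation. Fixing $\{0\}\times\widehat{A}$ pointwise forces $\psi(a,\phi)=(f(a),\,g(a)+\phi)$ with $f\in\Aut(A)$ and $g\in\Hom(A,\,\widehat{A})$; substituting into $h_{q'}\circ\psi=h_q$ gives $\langle f(a)t,\,g(a)\rangle\,\langle f(a)t,\,\phi\rangle\,q'(f(a))=\langle at,\,\phi\rangle\,q(a)$ for all $a,\phi$. Reading off the $\phi$-dependence yields $\langle f(a)t,\,\phi\rangle=\langle at,\,\phi\rangle$ for all $\phi$, so $f=\id_A$ by nondegeneracy of the evaluation pairing, and the remaining identity is $q(a)=q'(a)\,\langle at,\,g(a)\rangle=q'(a)\,\kappa^t(g)(a)$. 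Conversely, any $g\in\Hom(A,\,\widehat{A})$ produces such a $\psi$. Hence $\M_q\cong\M_{q'}$ iff $q(q')^{-1}\in\Image(\kappa^t)$; specializing to $q'=1$ (so that $\M_1\cong\Z(\E)$ is the unit of $\Mext$, since $q\mapsto[\M_q]$ is a homomorphism) identifies the kernel of $q\mapsto[\M_q]$ with $\Image(\kappa^t)$, giving $\Mext_{triv}(\E)\cong\Coker(\kappa^t)$.

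The hard part is not any single estimate but getting the bookkeeping exactly right: pinning down that the canonical copy of $\E$ in $\M_q$ is the subgroup $\{0\}\times\widehat{A}$ carrying the correct restricted form, and translating ``the braided equivalence restricts to the identity on $\E$'' into the condition that the isometry $\psi$ fixes that subgroup \emph{pointwise} rather than merely setwise. Once this is in place, the rest is a short manipulation of bicharacters on $A\times\widehat{A}$.
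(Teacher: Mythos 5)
Your argument is correct, but it takes a genuinely different route from the paper. The paper's proof stays inside the graded-extension formalism: it realizes trivial extensions as zestings of the identity extension $\E\hookrightarrow\Z(\E)$ by abelian $3$-cocycles (citing \cite{DGPRZ}) and then quotes the classification of isomorphisms between trivial braided extensions from \cite[Section 8.7]{DN2}, obtaining \eqref{kappa} from formula (8.52) there together with the observation that the self-braiding of a simple $X\in\Rep(A,\,t)$ is $\langle X,\,t\rangle$. You instead argue entirely with pre-metric groups: present $\Mext_{triv}(\E)$ as the image of $q\mapsto \M_q$ (using Lemma~\ref{closed}) and determine by hand which isometries of $(A\times\widehat{A},\,h_q)$ fix $\{0\}\times\widehat{A}$ pointwise; your computation forcing $f=\id_A$ from the $\phi$-dependence and reading off $q=q'\cdot\kappa^t(g)$ is exactly right and gives the cokernel description without invoking \cite{DN2}. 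What your route buys is a self-contained, explicit argument; what it costs is two facts you assert rather than prove. First, that every trivial minimal extension is equivalent over $\E$ to some $\M_q$: this is not quite ``by definition'' (triviality means the class $L_\C$ vanishes), but it follows in a line — the extension $0\to\widehat{A}\to\Inv(\C)\to A\to 0$ splits, and choosing a splitting compatible with the canonical $A$-grading identifies the quadratic form of $\C$ with $h_q$, where $q$ is its restriction to the image of the splitting. Second, the dictionary that an isometry fixing $\widehat{A}$ pointwise can be promoted to a braided equivalence restricting to the identity on $\E$: this is true, but the non-obvious half deserves a sentence — any braided equivalence realizing such an isometry restricts on $\E$ to a functor that is the identity on objects with symmetric tensor structure, and since symmetric $2$-cocycles valued in $\kk^\times$ are coboundaries (as $\Ext(\widehat{A},\,\kk^\times)=0$) this discrepancy can be absorbed by modifying the equivalence by a natural isomorphism, making the restriction to $\E$ the identity on the nose. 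With those two points spelled out, your proof is complete and recovers the paper's statement.
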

\begin{proof}
A trivial extension of $\E=\Rep(A,\,t)$ is obtained by  deforming the structure constraints of the identity extension
\[
\E \hookrightarrow \Z(\E) = \bigoplus_{a\in A}\,  \Z(\E)_a
\]
by means of an abelian $3$-cocycle $(\omega,\, c) \in Z^3_{ab}(A,\,\kk^\times)$. This is a special case of a 
{\em zesting} procedure studied in \cite{DGPRZ}. Namely, let $a_{W,X,Y}$ and $c_{X,Y}$ denote 
the associativity and braiding isomorphismss in $\Z(\E)$. The deformed extension $\Z(\E)^{ (\omega,\, c) }$ coincides with
$\Z(\E)$ as an abelian category and has the same tensor product, while its associativity and braiding isomorphisms are given by
\begin{eqnarray*}
\tilde{a}_{W,X,Y}  &=&  \omega( \deg(X), \deg(Y),\deg(Z))\, a_{W,X,Y}, \\
\tilde{c}_{X,Y}  &=&  \omega( \deg(X),\deg(Y))\, c_{X,Y}, 
\end{eqnarray*}
for all homogeneous objects $W,X,Y$.

Isomorphisms between trivial braided extensions were classified in \cite[Section 8.7]{DN2}. In particular, formula \eqref{kappa}
follows from \cite[formula (8.52)]{DN2}, since the self braiding $c_{X,X}$ of a simple object $X \in \Rep(A,\,t)$ is given by the evaluation 
$\langle X,\, t \rangle$.
\end{proof}

\begin{proposition}
\label{explicit tri}
Let $A(2) = \mathbb{Z}_{2^{n_1}} \times \cdots  \times \mathbb{Z}_{2^{n_r}}$ be the Sylow $2$-subgroup of $A$.  Then 
\begin{equation}
\label{mext triv formula}
\Mext_{triv}(\E) \cong 
\begin{cases}
\mathbb{Z}_4 \times \mathbb{Z}_2^{r-1} & \text{if $\langle t \rangle$ is a direct summand of $A$}, \\
\mathbb{Z}_2^{r} & \text{otherwise}.
\end{cases}
\end{equation}
\end{proposition}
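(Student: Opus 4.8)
The plan is to compute the cokernel in Proposition~\ref{Mext triv} explicitly, using the Sylow decomposition to reduce to a $2$-group. First I would note that $\Mext_{triv}(\E)$ only sees the $2$-part of $A$: for odd primes $p$, the map $\kappa^t$ restricted to the $p$-Sylow is an isomorphism onto $\Quad(A(p),\kk^\times)$, because $t$ has order dividing $2$ so $t|_{A(p)}$ is trivial, $xt = x$, and $q_Z(x) = \langle x, Z(x)\rangle$ ranges over all quadratic forms as $Z$ ranges over $\Hom(A(p), \widehat{A(p)})$ when $|A(p)|$ is odd (every quadratic form is the diagonal of a unique symmetric bilinear form in odd order). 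Hence the cokernel is concentrated at $p=2$, consistent with the fact that $\Mext_{triv}$ is a quotient of $H^3_{ab}(A,\kk^\times)=\Quad(A,\kk^\times)$ and the latter's odd part is killed.

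So I reduce to $A = A(2) = \mathbb{Z}_{2^{n_1}} \times \cdots \times \mathbb{Z}_{2^{n_r}}$. Here I would use the standard description of $\Quad(A,\kk^\times) = H^3_{ab}(A,\kk^\times)$ in terms of generators: a quadratic form $q$ is determined by the values $q(e_i) \in \mathbb{Z}_{2^{n_i+1}}$ (a root of unity of order dividing $2^{n_i+1}$) and the associated bilinear pairings $b(e_i,e_j) \in \mathbb{Z}_{2^{\min(n_i,n_j)}}$ for $i<j$, subject to $q(e_i)^2 = b(e_i,e_i)$ — i.e. $\Quad(A,\kk^\times) \cong \prod_i \mathbb{Z}_{2^{n_i+1}} \times \prod_{i<j}\mathbb{Z}_{2^{\min(n_i,n_j)}}$. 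The image of $\kappa^t$ consists of those $q_Z(x) = \langle xt, Z(x)\rangle$; since $Z \in \Hom(A,\widehat{A})$ corresponds to an \emph{arbitrary} bilinear form $\beta$ on $A$ (not necessarily symmetric) via $Z(x)(y) = \beta(x,y)$, one has $q_Z(x) = \langle t,x\rangle\,\beta(x,x)$. The quadratic form $x \mapsto \beta(x,x)$ ranges exactly over all quadratic forms whose associated bilinear form $\beta(x,y)+\beta(y,x)$ is ``even'', equivalently over the subgroup generated by $q(e_i)$'s of the form $\zeta$ with $\zeta$ having order dividing $2^{n_i}$ together with all off-diagonal freedom; concretely the image is $\prod_i 2\mathbb{Z}_{2^{n_i+1}} \times \prod_{i<j}\mathbb{Z}_{2^{\min(n_i,n_j)}}$ when $t=0$, so the cokernel is $\prod_i \mathbb{Z}_2 = \mathbb{Z}_2^r$. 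Then I multiply by the character $\langle t, -\rangle$: twisting by $t$ shifts the diagonal entry $q(e_i)$ by the order-$2$ character, which changes the coset representatives but the cokernel group is unchanged unless $t$ interacts with the constraint $q(e_i)^2 = b(e_i,e_i)$ in the top component. The case split is exactly whether $\langle t\rangle$ splits off $A$: if $t = e_k$ is a generator of a $\mathbb{Z}_2$ direct summand (so some $n_k=1$ and $t$ is that generator, after adjusting basis), the twist $q_Z(e_k) = -\beta(e_k,e_k)$ now hits all of $\mathbb{Z}_4$ in that coordinate rather than just $2\mathbb{Z}_4 = \{\pm 1\}$, because $\langle t, e_k\rangle = -1$ and $\beta(e_k,e_k)$ ranges over $\{\pm1\}$ giving $q_Z(e_k) \in \{\mp 1\}$ — wait, that is still order $\le 2$; the gain of a $\mathbb{Z}_4$ comes rather from the fact that when $\langle t\rangle$ is a summand the relevant covering group structure changes, so I would track carefully via the explicit cocycle formula in the proof of Proposition~\ref{Mext triv} that the cokernel becomes $\mathbb{Z}_4 \times \mathbb{Z}_2^{r-1}$: one $\mathbb{Z}_2$ factor merges with the order-$2$ twist to produce a $\mathbb{Z}_4$.

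The main obstacle will be this last bookkeeping step — correctly identifying the extension class of the cokernel, i.e. why a $\mathbb{Z}_2$ gets promoted to $\mathbb{Z}_4$ precisely when $\langle t\rangle$ is a direct summand, rather than merely getting $\mathbb{Z}_2^r$ always. The cleanest route is probably to fix an explicit basis $e_1,\dots,e_r$ of $A(2)$, write $t = \sum_i \eps_i e_i \cdot 2^{n_i-1}$ with $\eps_i\in\{0,1\}$, note that $\langle t\rangle$ is a direct summand iff some $\eps_i=1$ with $n_i=1$, and then diagonalize: the quadratic form $q_Z(e_i) = (-1)^{\eps_i\delta_{n_i,1}}\cdot(\text{stuff})\cdot\beta_{ii}$ and compare orders in $\mathbb{Z}_{2^{n_i+1}}$. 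In the summand case one basis vector can be chosen with $n_i=1$ and $t$ equal to it; then the $i$-th factor of $\Quad$ is $\mathbb{Z}_4$ with $q(e_i)$ a primitive $4$th root allowed, the constraint $q(e_i)^2=b(e_i,e_i)=1$ forcing $q(e_i)=\pm 1$ in the untwisted case but the twist by $\langle t,e_i\rangle$ is itself the relation that changes $b(e_i,e_i)$, freeing $q(e_i)$ to be a primitive $4$th root — yielding the extra $\mathbb{Z}_2$ that makes $\mathbb{Z}_2\times\mathbb{Z}_2 \rightsquigarrow \mathbb{Z}_4\times\mathbb{Z}_2^{r-1}$ after accounting for how the image subgroup sits. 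I expect the formula $\eqref{mext triv formula}$ to follow once this one coordinate is analyzed correctly; the rest is a routine product computation over the remaining basis vectors, where each contributes exactly one $\mathbb{Z}_2$ to the cokernel independently of $t$.
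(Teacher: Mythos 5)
Your reduction to the $2$-primary part and your computation of the untwisted image (diagonal values of an arbitrary bilinear form realize exactly the $2^{n_i}$-th roots of unity in each diagonal coordinate, all off-diagonal values of the associated symmetric form being realizable) agree with the paper's argument. But the decisive step --- where the $\mathbb{Z}_4$ comes from when $\langle t\rangle$ is a direct summand --- contains a genuine error and is never repaired. You expand $\kappa^t$ as $q_Z(x)=\langle t,x\rangle\,\beta(x,x)$, i.e.\ as the diagonal of $\beta$ twisted by a \emph{fixed} character; but by \eqref{kappa}, $q_Z(x)=\langle xt,\,Z(x)\rangle=Z(x)(x)\,Z(x)(t)=\beta(x,x)\,\beta(x,t)$, and the factor $\beta(x,t)$ varies with $Z$ (there is no canonical pairing $\langle t,x\rangle$ on $A$). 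With the correct formula the dichotomy is immediate and goes in the \emph{opposite} direction from your sketch: if $\langle t\rangle$ is a summand, choose the basis so that $t=x_1$ has order $2$; then $q_Z(x_1)=Z(x_1)(x_1t)=Z(x_1)(e)=1$, so the image of $\kappa^t$ has trivial component in that coordinate and the cokernel acquires the full $\mathbb{Z}_4=\mu_4$ of possible values $q(x_1)$; if instead $t=x_1^{2^{n_1-1}}$ with $n_1>1$, then $q_Z(x_1)=Z(x_1)(x_1)^{1+2^{n_1-1}}$ with odd exponent, so the image still sweeps all $2^{n_1}$-th roots of unity and every coordinate contributes exactly one $\mathbb{Z}_2$. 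Your narrative that the twist ``frees $q(e_i)$ to be a primitive $4$th root'' has it backwards: primitive $4$th roots are always attainable values of quadratic forms on a $\mathbb{Z}_2$ factor; what changes in the summand case is that the \emph{image} collapses there, not that the target grows.

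Two secondary points feed this confusion. First, the constraint ``$q(e_i)^2=b(e_i,e_i)$'' you impose on $\Quad(A,\kk^\times)$ does not exist: in the standard parametrization (the one the paper uses) the diagonal values $q(x_i)\in\mu_{2^{n_i+1}}$ and the off-diagonal values $b(x_j,x_l)$, $j<l$, are independent free parameters, and $b(x,x)=q(x^2)/q(x)^2$ is determined, not prescribed; your later claim that in the untwisted case $q(e_i)$ is forced to be $\pm1$ contradicts this (the semion form on $\mathbb{Z}_2$ takes value $\pm i$). Second, with your fixed-character formula the assignment $Z\mapsto q_Z$ would not even be a group homomorphism, which is why you end up speaking of shifted ``coset representatives''; the actual $\kappa^t$ is a homomorphism, its image is a product subgroup coordinate-by-coordinate (one checks independence of the diagonal and off-diagonal choices of $Z$), and the cokernel is the product of the coordinate cokernels, giving \eqref{mext triv formula}. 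You explicitly flag your own uncertainty at exactly this point and defer to ``tracking the cocycle formula,'' so the case distinction that is the content of the proposition is not established in your proposal.
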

\begin{proof}
We write $A = A(2)\times A(\text{odd})$ and note that the map $\kappa^t$ from \eqref{kappa} respects this decomposition and is
an isomorphism on $A(\text{odd})$. So we may assume that $A$ is a $2$-group. 
Isomorphism \eqref{mext triv formula} is easy to check when $A= \mathbb{Z}_{2^{n}} $ is cyclic. In this case, we have
\[
H^1(\mathbb{Z}_{2^{n}},\,\mathbb{Z}_{2^{n}})= \mathbb{Z}_{2^{n}}, \quad \Quad(\mathbb{Z}_{2^{n}}, \, \kk^\times) \cong \mathbb{Z}_{2^{n+1}},
\]
and the homomorphism $\kappa^t:\mathbb{Z}_{2^{n}} \to \mathbb{Z}_{2^{n+1}}$ is injective if $n>1$ and is zero for $n=1$.

Let $A = \langle x_1 \rangle \times \cdots \times \langle x_r \rangle$, where $|x_i|=2^{n_i}$.  We may assume that $t\in  \langle x_1 \rangle$.
A quadratic form $q:A \to \kk^\times$ is uniquely determined by the values 
\[
q(x_i),\, i=1,\dots, r\quad \text{and} \quad  b(x_j,\, x_l):= \frac{q(x_j x_l)}{q(x_j) q(x_l)},\, 1\leq j <l\leq r.
\]
Here $q(x_i)$ is a  $2^{n_i+1}$th root of unity  and $b(x_j,\, x_l)$ is a  $\min\{ 2^{n_j},\, 2^{n_l}\}$th root of unity.
Any choice of such roots of unity will give a quadratic form.
Let us identify $Z\in H^1(A,\, \widehat{A})$  with a bilinear form on $A$.
The symmetric bilinear form associated to $q_Z$ is 
\[
b_Z(x,y) : = Z(x,\,y)\, Z(y,\,x),\qquad x,y \in A,
\]
which can realize all possible values of  $b(x_j,\, x_l)$.  On the other hand, $q_Z(x_i) = Z(x_i, t x_i )$ can be any  $2^{n_i}$th root of unity
if $i>1$ and $q_Z(x_1) = Z(x_1, t ) Z(x_1, x_1)$. The latter can be any $2^{n_1}$th  root of unity if $t\neq x_1$ (i.e., $n_1>1$)  and 
iequals $1$ otherwise. From this, the cokernel of \eqref{kappa} is easily determined.
\end{proof}

\subsection{Pointed minimal extensions}

The group $Mext_{pt}(\Rep(A,\,t))$ can be computed using the classification of quasi-trivial
graded extensions from \cite[Section 8.7]{DN2}.  For an abelian group $A$ 
we identify $ \Ext(A,\, \widehat{A})$ with $H^2_{ab}(A,\, \widehat{A})$
There is a natural involution
\[
\eps: H^2_{ab}(A,\, \widehat{A})  \to H^2_{ab}(A,\, \widehat{A}) 
\]
that sends the class of an abelian extension 
\begin{equation}
\label{0ACA0}
0\to \widehat{A}\to C \to  A \to 0
\end{equation}
to the class of the dual extension obtained by applying the functor $\Hom(-,\, \kk^\times)$ to \eqref{0ACA0}:
\begin{equation}
\label{dual 0ACA0}
0\to \widehat{A}\to \widehat{C} \to  \widehat{\widehat{A\,}} =A \to  0.
\end{equation}
 This $\eps$ was explicitly described in \cite[Section 8]{MN}.
Namely, let $L=\{L_{x,y}\}_{x,y\in A}$  be a normalized $2$-cocycle in $H^2_{ab}(A,\, \widehat{A})$ corresponding
to the extension \eqref{0ACA0}.  For any $z\in A$ there is a normalized $1$-cochain $a_z \in C^1(A,\, \kk^\times)$ such that
\begin{equation}
\label{coboundary L}
L_{x,y}(z) = \frac{a_z(x) a_z(y)} {a_z(xy)},\qquad x,y,z\in A,
\end{equation}
and  $\eps(L)$ is determined by
\begin{equation}
\label{coboundary epsL}
\eps(L)_{x,y}(z) = \frac{a_x(z) a_y(z)} {a_{xy}(z)},\qquad x,y,z\in A.
\end{equation}
Let us denote $H^2_{ab}(A,\, \widehat{A})^\eps =\{ L \in H^2_{ab}(A,\, \widehat{A}) \mid \eps(L)=L \}$. 

By Proposition~\ref{min ext grading}, an extension $\C\in \Mext_{pt}(\E)$ defines a quasi-trivial $A$-graded braided extension of $\E$.
The  corresponding  braided  monoidal functor $F_\C: A \to \uPicbr(\E)$ is completely determined by 
an abelian $2$-cocycle $L_\C \in H^2_{ab}(A,\, \widehat{A}) $ defining the monoidal structure of $F_\C$.  This $L_\C$
is precisely the $2$-cocycle corresponding to the central extension  $0\to \widehat{A}\to \Inv(\C)  \to  A \to 0$.

Thus, there is a homomorphism 
\begin{equation}
\label{Mext to H2ab}
\lambda: \Mext_{pt}(\E)\to   H^2_{ab}(A,\, \widehat{A})  : \C \mapsto L_\C
\end{equation}
whose kernel is $\Mext_{triv}(\E)$.  The image of $\lambda$ consists of all $L \in H^2_{ab}(A,\, \widehat{A})$ such that
the corresponding braided monoidal functor $A \to \uPicbr(\E)$  admits an extension to a  braided monoidal $2$-functor.
By \cite[Section 8.7]{DN2} this image is the kernel of the Pontryagin-Whitehead homomorphism 
\[
PW^2: H^2_{ab}(A,\, \widehat{A}) \to H^4_{ab}(A,\, \kk^\times)
\]
whose components are given by formulas  \cite[(8.53)-(8.55)]{DN2}:
\begin{equation}
\label{PW2}
PW^2(L)(x,y,z,w)= c_{L_{x,y}, L_{z,w}},\qquad PW^2(L)(x,y|,z)=1,\qquad PW^2(L)(x,|y,z)= L_{y,z}(x),
\end{equation}
for all $x,y,z,w\in A$.  Thus, there is an exact sequence of group homomorphisms
\begin{equation}
\label{SE for Mextpt}
0 \to \Mext_{triv}(\E) \to \Mext_{pt}(\E)\xrightarrow{\lambda} H^2_{ab}(A,\, \widehat{A}) \xrightarrow{PW^2} H^4_{ab}(A,\, \kk^\times).
\end{equation}

For an Abelian group $A$ let us denote  $A_2=\{ x\in A \mid x^2 = e\}$.

\begin{proposition}
\label{Mext pt}
There is a group isomorphism
\[
\Mext_{pt}(\E)/ \Mext_{triv}(\E) \cong \Ker \left(  H^2_{ab}(A,\, \widehat{A})^\eps \xrightarrow{\theta^t}  \Hom(A_2/ \langle t \rangle,\,\kk^\times) \right),
\]
where
\begin{equation}
\label{theta}
\theta^t:  H^2_{ab}(A,\, \widehat{A})^\eps \to \Hom(A_2/ \langle t \rangle,\,\kk^\times) :  L \mapsto  \theta^t_L,\qquad \theta^t_L(x) =L_{x,x}(xt),\, x\in A.
\end{equation}
\end{proposition}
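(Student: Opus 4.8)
The plan is to combine the exact sequence \eqref{SE for Mextpt} with the description of the image of the Pontryagin--Whitehead homomorphism $PW^2$, identifying when a class $L\in H^2_{ab}(A,\widehat A)$ lies in $\Ker PW^2$ with a condition that is visibly governed by the $\eps$-symmetry and the character $\theta^t_L$. First I would recall from \eqref{SE for Mextpt} that $\lambda$ induces an injection $\Mext_{pt}(\E)/\Mext_{triv}(\E)\hookrightarrow H^2_{ab}(A,\widehat A)$ whose image is exactly $\Ker PW^2$, so it suffices to prove that, inside $H^2_{ab}(A,\widehat A)$,
\[
\Ker PW^2 \;=\; \{\,L\in H^2_{ab}(A,\widehat A)^\eps \mid \theta^t_L = 1 \text{ on } A_2/\langle t\rangle\,\}.
\]
I would analyze the three components of $PW^2$ in \eqref{PW2} separately. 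The middle component $PW^2(L)(x,y|,z)=1$ is automatic. The component $PW^2(L)(x,|y,z)=L_{y,z}(x)$ must vanish in $H^4_{ab}(A,\kk^\times)$; using the coboundary description \eqref{coboundary L}--\eqref{coboundary epsL} of $\eps$, I expect that $L_{y,z}(x)$, viewed as a $3$-variable cochain, is a coboundary precisely when $L$ is $\eps$-symmetric — this should essentially be the computation of $\eps$ carried out in \cite[Section 8]{MN}, so the $\eps$-invariance condition falls out of the vanishing of this component. The first component $PW^2(L)(x,y,z,w)=c_{L_{x,y},L_{z,w}}$, where $c$ is the braiding in $\uPicbr(\E)$, requires knowing that braiding explicitly: by \eqref{Quadratic from Q}--\eqref{Qint} and the fact that the self-braiding of a simple object $X\in\Rep(A,t)$ is $\langle X,t\rangle$, the braiding of two elements of $\widehat A\subset\Picbr(\E)$ is controlled by the quadratic form $Q_\E$ restricted to $\widehat A$, which for pointed $\E$ amounts to the pairing $\phi\mapsto\langle t,\phi\rangle$ together with the bilinear pairing between $A$ and $\widehat A$. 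I would then show that the cohomology class of $c_{L_{x,y},L_{z,w}}$ in $H^4_{ab}(A,\kk^\times)$ vanishes iff a single "diagonal" invariant vanishes, and that this invariant is exactly $\theta^t_L(x)=L_{x,x}(xt)$ evaluated on $A_2/\langle t\rangle$ — the restriction to $A_2$ arising because only $2$-torsion survives the quadratic/cup-square obstruction (mirroring the cup-square description of the obstruction in Example~\ref{obstruction for Pic(E)}), and the quotient by $\langle t\rangle$ arising because $t$ itself is already built into the braiding of $\E$ and so contributes trivially.

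Concretely, the key steps in order are: (1) reduce, via \eqref{SE for Mextpt}, to computing $\Ker PW^2$ modulo $\Mext_{triv}(\E)$; (2) show the component $PW^2(L)(x,|y,z)=L_{y,z}(x)$ vanishes in abelian cohomology iff $\eps(L)=L$, by matching it with the coboundary formulas \eqref{coboundary L}--\eqref{coboundary epsL}; (3) compute the braiding $c_{-,-}$ on the relevant part of $\uPicbr(\E)$ from \eqref{Qint} and the self-braiding formula $c_{X,X}=\langle X,t\rangle$; (4) under the assumption $\eps(L)=L$, show the remaining obstruction $c_{L_{x,y},L_{z,w}}$ is cohomologically trivial iff $\theta^t_L$ vanishes on $A_2/\langle t\rangle$, reading off the description of $\theta^t$ from \eqref{theta}; (5) assemble these into the stated kernel. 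The well-definedness of $\theta^t$ — that $L_{x,x}(xt)$ depends only on the class of $x$ in $A_2/\langle t\rangle$ when $\eps(L)=L$ — should itself be checked as part of step (4), using that for $x\in A_2$ the $\eps$-symmetry forces $L_{x,x}(xt)$ to be a sign, and that replacing $x$ by $xt$ changes it by $L_{t,t}(\cdot)$-type factors that are trivial on the identity component.

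The main obstacle I anticipate is step (4): extracting from the $4$-cochain $c_{L_{x,y},L_{z,w}}$ — an element of $Z^4_{ab}(A,\kk^\times)$ whose cohomology class is what matters — the precise single invariant $\theta^t_L$, and proving that its vanishing is both necessary and sufficient for cohomological triviality. This is exactly the kind of place where one must be careful about the difference between a cocycle and its class, and where the passage to $2$-torsion and the quotient by $\langle t\rangle$ must be justified rather than assumed; I would expect to lean heavily on the explicit low-degree abelian cohomology computations of \cite[Section 2.1]{DN2} and the Pontryagin--Whitehead formalism of \cite[Section 8.7]{DN2}, together with the $\eps$-symmetry from \cite[Section 8]{MN}, to pin down $H^4_{ab}(A,\kk^\times)$ well enough to evaluate the obstruction. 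The $\eps$-invariance half (steps (2)) I expect to be comparatively routine once the MN formulas are invoked.
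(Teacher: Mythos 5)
Your reduction via \eqref{SE for Mextpt} to computing $\Ker(PW^2)$ inside $H^2_{ab}(A,\widehat A)$ is exactly the paper's first step, but the way you propose to decide when the class $PW^2(L)$ vanishes in $H^4_{ab}(A,\kk^\times)$ has a genuine gap. The component-by-component analysis is not valid: the three components of an abelian $4$-cocycle are not separately cohomology classes, and your key claim in step (2) --- that the component $PW^2(L)(x|y,z)=L_{y,z}(x)$ is a coboundary precisely when $\eps(L)=L$ --- is false as stated, since for each fixed $x$ the symmetric $2$-cocycle $(y,z)\mapsto L_{y,z}(x)$ with values in the divisible group $\kk^\times$ is \emph{always} a coboundary (this is precisely \eqref{coboundary L}); trivializing it merely shifts the other components, and the residual obstruction has to be computed for the total class. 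The decisive input that does this is missing from your plan: the paper invokes the Eilenberg--Mac Lane exact sequence $0\to\Hom(A_2,\kk^\times)\to H^4_{ab}(A,\kk^\times)\xrightarrow{h_A}H^2_{ab}(A,\widehat A)$ together with the explicit isomorphism $\iota_A:\Ker(h_A)\xrightarrow{\sim}\Hom(A_2,\kk^\times)$ of \eqref{xxxx formula}, and then computes the two invariants: $h_A(PW^2(L))=L\,\eps(L)^{-1}$ (giving the $\eps$-symmetry condition) and, on $\Ker(h_A)$,
\[
\iota_A(PW^2(L))(x)=PW^2(L)(x,x,x,x)\,PW^2(L)(x|x,x)=c_{L_{x,x},L_{x,x}}\,L_{x,x}(x)=L_{x,x}(t)\,L_{x,x}(x)=L_{x,x}(xt).
\]
Without this structure of $H^4_{ab}(A,\kk^\times)$, your step (4) --- which you yourself flag as the main obstacle --- cannot be carried out: you have no mechanism guaranteeing that a single diagonal invariant detects vanishing, nor a way to identify which invariant it is.

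Moreover, the attribution of the two conditions to the two components is wrong. The invariant $\theta^t_L(x)=L_{x,x}(xt)$ receives contributions from \emph{both} nontrivial components of $PW^2(L)$: the braiding component contributes $L_{x,x}(t)$ and the $(x|y,z)$-component contributes $L_{x,x}(x)$. Following your plan (braiding component $\Rightarrow\theta^t$, the other component $\Rightarrow\eps$), for instance via the cup-square picture of Example~\ref{obstruction for Pic(E)}, would produce the wrong character $x\mapsto L_{x,x}(t)$, missing exactly the $t$-shift that distinguishes the super-Tannakian case from the Tannakian one. Finally, the passage to $A_2/\langle t\rangle$ is not because ``$t$ is built into the braiding'': it is simply that $\theta^t_L(t)=L_{t,t}(t^2)=L_{t,t}(e)=1$ for a normalized cocycle, so the character on $A_2$ produced by $\iota_A$ automatically kills $t$ and $PW^2$ descends to the map $\theta^t$ of \eqref{theta}.
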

\begin{proof}
Given the exact sequence \eqref{SE for Mextpt}, all we need to show is that the kernels of $\theta^t$ and $PW^2$ coincide. 
It was shown in \cite{EM2}  (see also \cite[(2.19)]{DN2})  that  there is an exact sequence
\[
0 \to \Hom(A_2,\, \kk^\times)\to H_{ab}^4(A,\,\kk^\times)  \xrightarrow{h_A} H^2_{ab}(A,\, \widehat{A}).
\]
It  follows from the construction described at the end of  \cite[Section 2.1]{DN2} and  formulas \eqref{coboundary L}, \eqref{coboundary epsL}
that 
\begin{equation}
\label{h PW2}
h_A(PW^2(L))= L\,\eps(L)^{-1},\qquad  L \in H^2_{ab}(A,\, \widehat{A}).
\end{equation}
%
%
There is a canonical isomorphism
\begin{equation}
\label{xxxx}
\iota_A: \Ker(h_A) \xrightarrow{\sim} \Hom(A_2,\, \kk^\times):  \alpha \mapsto \iota_A(\alpha),
\end{equation}
defined by 
\begin{equation}
\label{xxxx formula}
\iota_A(\alpha)(x) = \frac{\alpha(x,x,x,x) \alpha(x|x,x)}{\alpha(x,x|x)}, \qquad x\in A_2.
\end{equation}
Combining formulas  \eqref{PW2}  and \eqref{xxxx formula} we obtain
\begin{equation}
\label{Iota PW2}
\iota_A(PW^2(L))(x)=  L_{x,x}(xt),\quad x\in A.
\end{equation}
Thus, $PW^2(L)=0$ if and only if  $L\in H^2_{ab}(A,\, \widehat{A})^\eps$ and $L_{x,x}(xt) =1$ for all $x\in A$,
i.e., $L \in \Ker(\theta^t)$.
%
Since the right hand side of \eqref{Iota PW2}  vanishes on $t$, we conclude that $PW^2$ descends to a homomorphism 
\[
\theta^t: H^2_{ab}(A,\, \widehat{A})^\eps \to  \Hom(A_2/ \langle t \rangle,\,\kk^\times)
\]
defined in \eqref{theta}.  
\end{proof}

\begin{remark}
The group  $H^2_{ab}(A,\, \widehat{A})$ is (non-canonically) isomorphic to $\Hom(\widehat{A}\ot \widehat{A},\,\kk^\times)$.
It was explained in \cite[Lemma 8.2]{MN} that, upon this isomorphism, $\eps$ is identified with the transposition map, so
\[
H^2_{ab}(A,\, \widehat{A})^\eps \cong \Sym^2(\widehat{A}).
\]
In particular, if $A$ is cyclic, then $\eps$ is the identity map.
\end{remark}

\begin{proposition}
\label{theta surj}
Homomorphism \eqref{theta} is surjective.
\end{proposition}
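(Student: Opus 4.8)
The plan is to exhibit, for each character $\chi \in \Hom(A_2/\langle t\rangle,\,\kk^\times)$, an explicit $\eps$-invariant abelian $2$-cocycle $L \in H^2_{ab}(A,\,\widehat{A})$ with $\theta^t_L = \chi$. First I would reduce to the case where $A$ is a $2$-group and use the structure theorem to write $A = \langle x_1\rangle \times \cdots \times \langle x_r\rangle$ with $|x_i| = 2^{n_i}$, arranged so that $t \in \langle x_1\rangle$ (possibly $t=x_1^{2^{n_1-1}}$, or $t=x_1$ when $n_1=1$). The group $A_2/\langle t\rangle$ has a basis coming from the order-$2$ elements $x_i^{2^{n_i-1}}$ for $i$ in the appropriate range, and a character $\chi$ is determined by its values $\chi(x_i^{2^{n_i-1}}) \in \{\pm 1\}$. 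I would then build $L$ diagonally as a product of cocycles supported on each cyclic factor: on $\langle x_i\rangle$ take the symmetric bilinear-type cocycle $L^{(i)}_{x_i^a,\,x_i^b}(x_i^c) = \zeta_i^{\,abc}$ (extended by the group operation and trivial on the other factors) for a suitable root of unity $\zeta_i$, which is manifestly $\eps$-invariant because it is ``symmetric in all three slots'' in the sense that formulas \eqref{coboundary L}, \eqref{coboundary epsL} give back the same cochain.

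The key computation is then to evaluate $\theta^t_L(x) = L_{x,x}(xt)$ on the generating order-$2$ elements. For $x = x_i^{2^{n_i-1}}$ with $i \geq 2$ (so $xt = x$ has the $x_1$-component $t$, which the $i$-th factor ignores), one gets $\theta^t_L(x_i^{2^{n_i-1}}) = L^{(i)}_{x_i^{2^{n_i-1}},\,x_i^{2^{n_i-1}}}(x_i^{2^{n_i-1}}) = \zeta_i^{\,2^{3n_i-3}}$, and one checks that choosing $\zeta_i$ to be a primitive $2^{n_i}$-th root of unity or its square realizes either value $\pm 1$; when $n_i = 1$ the relevant exponent forces the value to lie in $\{\pm 1\}$ and the primitive $2$nd root $\zeta_i = -1$ gives $-1$. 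The factor $i=1$ requires care: here $xt$ mixes the $t$-shift into the self-braiding, so one computes $\theta^t_L(x_1^{2^{n_1-1}}) = L^{(1)}_{x_1^{2^{n_1-1}},\,x_1^{2^{n_1-1}}}(x_1^{2^{n_1-1}+1})$ (or, when $t = x_1$, the element $x_1^{2^{n_1-1}}$ is trivial in $A_2/\langle t\rangle$ and there is nothing to check), and again a choice of $\zeta_1$ realizes the prescribed sign. Multiplying the chosen $L^{(i)}$ together and using that $\theta^t$ is a homomorphism (so it splits over the factors) shows the composite $\theta^t_L$ equals $\chi$.

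The main obstacle I anticipate is bookkeeping around the factor $\langle x_1\rangle$ containing $t$: the shift $x \mapsto xt$ in the definition of $\theta^t_L$ interacts with the diagonal decomposition only through that one factor, and one must make sure that (a) the cocycle one writes there is genuinely $\eps$-invariant after the shift is taken into account, and (b) the realizable values of $L^{(1)}_{x_1^{2^{n_1-1}},x_1^{2^{n_1-1}}}(x_1^{2^{n_1-1}+1})$ still cover $\{\pm 1\}$ — which parallels exactly the case analysis already carried out in the proof of Proposition~\ref{explicit tri} for the cokernel of $\kappa^t$. A cleaner alternative, which I would use to sidestep explicit cocycles, is to observe that $H^2_{ab}(A,\,\widehat{A})^\eps \cong \Sym^2(\widehat{A})$ by the Remark above, and that under this identification $\theta^t$ becomes, up to the $t$-twist, the natural ``restriction to the diagonal then to $A_2$'' map $\Sym^2(\widehat{A}) \to \Hom(A_2,\,\kk^\times)$, $\phi\psi \mapsto (x \mapsto \phi(x)\psi(x))$; surjectivity of this restriction is elementary since every character of $A_2$ lifts to a character of $A$ and $\phi \mapsto \phi^2|_{A_2}$ is onto because squaring is surjective onto $A_2$-characters when we allow all of $\widehat{A}$. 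Either route finishes the proof; I would present the conceptual one and relegate the sign-chasing on the $t$-factor to a one-line reference to Proposition~\ref{explicit tri}.
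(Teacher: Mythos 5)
Your overall strategy --- reduce to a $2$-group, split $A$ into cyclic factors with $t$ in the first one, and realize an arbitrary sign on the order-$2$ element of each factor $i\geq 2$ by a cocycle supported on that factor --- is the same as the paper's, but both of your concrete realizations fail. In the main route, for $L^{(i)}_{x_i^a,\,x_i^b}(x_i^c)=\zeta_i^{abc}$ to be a cochain valued in $\widehat{\langle x_i\rangle}$ you need $\zeta_i^{2^{n_i}}=1$ (already $L^{(i)}_{x_i,x_i}$ must be a character), and then your own evaluation $\theta^t_L(x_i^{2^{n_i-1}})=\zeta_i^{2^{3(n_i-1)}}$ equals $1$ whenever $n_i\geq 2$, since $3(n_i-1)\geq n_i$. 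So diagonal bimultiplicative (``trilinear'') cocycles realize $-1$ only on factors of order $2$, i.e.\ only when $A$ is elementary abelian. This is precisely why the paper instead uses the non-bimultiplicative ``carry'' cocycle generating $\Ext(C_i,\widehat{C_i})$, namely $L_{x^k,x^l}=1$ for $k+l<2N_i$ and $L_{x^k,x^l}(x^m)=\xi^m$ for $k+l\geq 2N_i$ with $\xi$ primitive of order $2N_i$; its value at $(x^{N_i},x^{N_i})$ on $x^{N_i}$ is $\xi^{N_i}=-1$.

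The ``cleaner alternative'' does not repair this. The isomorphism $H^2_{ab}(A,\widehat{A})^\eps\cong\Sym^2(\widehat{A})$ is non-canonical, and under it $\theta^t$ is not the naive map ``evaluate on the diagonal, then restrict to $A_2$'': that map kills every square, because $\phi^2(x)=\phi(x^2)=1$ for all $x\in A_2$ and $\phi\in\widehat{A}$, so your closing claim that $\phi\mapsto\phi^2|_{A_2}$ is surjective is false; on a single cyclic factor the naive map is identically trivial, while $\theta^t$ is onto $\{\pm1\}$ there by the carry-cocycle computation. The evaluation $L_{x,x}(xt)$ is an invariant of the extension class of quadratic-refinement type and is not computed from the associated symmetric bilinear form. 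Finally, the care you devote to the factor containing $t$ is unnecessary: $t$ is the unique order-$2$ element of $\langle x_1\rangle$, so that generator dies in $A_2/\langle t\rangle$, and $\theta^t_L(t)=L_{t,t}(e)=1$ automatically (this is exactly why $\theta^t$ descends to the quotient) --- nothing needs to, or can, be realized there.
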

\begin{proof}
We may assume that $A= C_1\times \cdots \times C_r$, where each $C_i$ is a cyclic $2$-group  and $t\in C_1$.  It suffices to check that 
for $C_i =\langle x \mid x^{2N_i} =e \rangle,\, i=2,\dots, r$,  the homomorphism
\begin{equation}
\label{H2 to pm1}
H^2_{ab}(C_i,\, \widehat{C_i}) \to \Hom(\mathbb{Z}_2,\, \kk^\times) \cong \{\pm 1 \}  : L \mapsto  L_{x^{N_i},\, x^{N_i}}(x^{N_i}) 
\end{equation}
is surjective. To see that, 
let $\xi$ be a primitive $2N_i$-th root of $1$ in $k$ and let  $L$ be a generator of $H^2_{ab}(C_i,\, \widehat{C_i}) \cong \mathbb{Z}_{2N_i}$ 
explicitly defined by
\[
L_{x^k, x^l}(x^m)  =
\begin{cases}
1 & \text{if  $k+l < 2N_i$ }\\
\xi^m &  \text{if  $k+l \geq  2N_i$},\qquad  m=1,\dots, 2N_i.
\end{cases}
\]
Then  \eqref{H2 to pm1} sends $L$ to $-1$, as required.
\end{proof}

\subsection{Integral minimal extensions}

We continue to denote $\E=\Rep(A ,\,t)$. 

\begin{lemma}
\label{index=lambda3}
We have 
\begin{equation}
\label{int pt index equality} 
[\Mext_{int}(\E) : \Mext_{pt}(\E)] = |{\wedge}^3 A|.
\end{equation}
\end{lemma}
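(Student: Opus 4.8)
The plan is to analyze the index $[\Mext_{int}(\E):\Mext_{pt}(\E)]$ via the homomorphism composition $A \to \Picbr(\E) \to \Pic(\E)$ whose image, by \eqref{composition A to pi0}, distinguishes integral from pointed extensions. An integral minimal extension $\E \hookrightarrow \C$ corresponds (via Remark~\ref{special extensions} and the integrality characterization) to a braided monoidal $2$-functor $F: A \to \uuPicbr(\E)$ whose associated $\pi_0$-map lands in $\Pic_{int}(\E) \times A = H^2(A,t,\kk^\times) \times A$ and projects to $\id_A$ on the $A$-factor; it is pointed precisely when the $\Pic$-component vanishes. So I would first set up the homomorphism
\[
\rho: \Mext_{int}(\E) \to \Hom\bigl(A,\, \Pic_{int}(\E)\bigr) = \Hom\bigl(A,\, H^2(A,t,\kk^\times)\bigr)
\]
sending $\C$ to the $\Pic$-component of $\pi_0(F_\C)$, with kernel exactly $\Mext_{pt}(\E)$. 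The index in question is then the order of the image of $\rho$.

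**Next I would identify this image.** Since $\Pic_{int}(\E) = H^2(A,t,\kk^\times)$ and $H^2(A,t,\kk^\times)$ is non-canonically isomorphic to $H^2(A,\kk^\times) \cong \wedge^2 \widehat{A}$ (for $A$ abelian), we get $\Hom(A, H^2(A,t,\kk^\times)) \cong \Hom(A,\wedge^2\widehat{A})$. The key point is that not every homomorphism $A \to H^2(A,t,\kk^\times)$ arises: a monoidal $2$-functor $F$ with prescribed $\pi_0$ exists only if the relevant cohomological obstruction (from the exact sequence governing $\TwoFun$, cf.\ \eqref{exact sequence Funsym}, or the Pontryagin--Whitehead-type obstruction of \cite{DN2}) vanishes, and moreover one must check surjectivity onto the obstruction-free part. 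I expect the image of $\rho$ to be precisely the subgroup of $\Hom(A, \wedge^2 \widehat{A})$ cut out by the compatibility forcing the total "triple data" to be a genuine $3$-cocycle-type class; concretely, combining the $A$-factor map $\id_A$ with the $\Pic$-component, the admissible homomorphisms correspond to trilinear alternating forms on $A$, i.e.\ to $\wedge^3 A \cong \Hom(\wedge^3\widehat A,\kk^\times)$, by the usual identification $\Hom(A,\wedge^2\widehat A) \supset \{\text{image}\} \cong \wedge^3\widehat A$. Counting gives $|{\wedge}^3 A|$ since $|\wedge^3 A| = |\wedge^3 \widehat A|$.

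**The main obstacle** will be pinning down exactly which homomorphisms $A \to H^2(A,t,\kk^\times)$ are realized by an actual integral minimal extension — i.e.\ proving both that the obstruction to lifting a given $\pi_0$-datum to a monoidal $2$-functor $A \to \uuPicbr(\E)$ with the $\id_A$ constraint is governed by an alternation condition, and that every obstruction-free datum is hit. I would handle this by using the explicit cocycle description of $\uPicbr(\E)$ from \cite[Section 6]{DN2}, the quadratic form \eqref{Quadratic from Q}, and the structure of $\uuPicbr(\Rep(A,t))$; the symmetrization/alternation coming from the braiding in $\Picbr(\E)$ and the requirement that the composite $A \to \Picbr(\E) \to A$ be the identity should force the $H^2$-valued homomorphism to "antisymmetrize" into a class in $\wedge^3\widehat A$. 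Reducing to $A$ a $2$-group (odd parts contribute nothing new, as in the proof of Proposition~\ref{explicit tri}) and then to a product of cyclic groups, I would verify surjectivity onto $\wedge^3\widehat A$ by an explicit construction analogous to the generator exhibited in the proof of Proposition~\ref{theta surj}. Assembling: $\Mext_{int}(\E)/\Mext_{pt}(\E) \cong \Image(\rho) \cong \wedge^3 A$, giving \eqref{int pt index equality}.
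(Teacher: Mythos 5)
Your overall strategy — study the homomorphism $\rho:\Mext_{int}(\E)\to\Hom(A,\Pic_{int}(\E))$ recording the $\Pic$-component of $\pi_0(F_\C)$, note that its kernel is exactly $\Mext_{pt}(\E)$, and then identify the image — is a legitimate alternative starting point, and the kernel identification is correct (an integral extension is pointed iff the homomorphism \eqref{mu} is trivial). The first half of your plan, bounding the image, is essentially what the paper does later in Propositions~\ref{trilinear form} and~\ref{Mext int}: one attaches to $x\mapsto\mu_x$ the trilinear form $\tau_\C$, checks trilinearity using the twisted product $*$, and uses the constraint $Q_\E(\mu_x,x)=1$ to get the relations \eqref{t-alternating}. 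Note, however, that these relations are a $t$-twisted alternation ($\tau(x,tx,y)=\tau(x,ty,y)=1$), not plain antisymmetry, and passing from such forms to $\Hom({\wedge}^3A,\kk^\times)$ requires the specific trick of choosing generators $e_1,\dots,e_r$ with $A=\prod\langle e_i\rangle=\prod\langle te_i\rangle$; your sketch glosses over this. Even granting it, this only yields the upper bound $[\Mext_{int}(\E):\Mext_{pt}(\E)]\le|{\wedge}^3A|$.

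The genuine gap is the lower bound, i.e.\ surjectivity: you must show that every admissible datum is actually realized by an integral minimal extension, which means lifting a braided monoidal functor $A\to\uPicbr(\E)$ with prescribed $\pi_0$ to a braided monoidal $2$-functor, i.e.\ proving vanishing of the relevant $H^4_{ab}(A,\kk^\times)$-valued obstruction, or exhibiting the extensions explicitly. Your proposal defers exactly this to "an explicit construction analogous to Proposition~\ref{theta surj}", but in the super-Tannakian case the obvious explicit candidates fail: by Remark~\ref{S remark}, twisted doubles $\Z(\Vec_A^\omega)$ only realize forms pulled back from ${\wedge}^3(A/\langle t\rangle)$, and realizing the remaining forms requires centers of non-pointed fusion categories (the Ising-type construction), for which no recipe is given in your sketch. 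The paper sidesteps this difficulty entirely: it proves the lemma by induction on the rank of $A$, writing $\E\cong\Rep(\mathbb{Z}_N)\bt\E_1$ and counting orders via the K\"unneth Theorem~\ref{Kunneth general} and Example~\ref{recursive pointed}, combined with the already computed sizes of $\Mext_{triv}$ and $\Mext_{pt}/\Mext_{triv}$ (Propositions~\ref{explicit tri}, \ref{Mext pt}, \ref{theta surj}); surjectivity of the trilinear-form map is then deduced afterwards in Proposition~\ref{Mext int} as a consequence of the count, not used as an input. As written, your argument does not establish the equality claimed in \eqref{int pt index equality}.
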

\begin{proof}
We will prove this by induction on $r(\E)$, the finite rank of the Abelian group $\Inv(\E)$. When $r(\E) =1$, i.e., when $A$ is cyclic,
we have $\Pic_{int}(\E)=0$ and, hence,  all  minimal extensions of $\E$ are pointed, so both sides of \eqref{int pt index equality} are equal to $0$.

As in Example~\ref{recursive pointed}, any $\E=\Rep(A,\,t)$ with $r(\E)=r+1,\, r>1,$ can be written as $\E =\Rep(\mathbb{Z}_N) \bt \E_1$, where $r(\E_1)=r$
and $\E=\Rep(A_1,\,t)$.  We have
\begin{equation}
\label{int pt index =}
[\Mext_{int}(\E) : \Mext_{pt}(\E)]  =
[\Mext_{int}(\E_1) : \Mext_{pt}(\E_1)]  \times   \frac{|\Mext_{int}(\E)|}{|\Mext_{int}(\E_1)|}  \times \frac{|\Mext_{pt}(\E_1)|}{|\Mext_{pt}(\E)|}.
\end {equation}
The first factor in \eqref{int pt index =} is equal to $|{\wedge}^3 A_1|$ by the inductive assumption.  
By Example~\ref{recursive pointed}, the second factor 
is equal to
\begin{equation*}
 |H^3(\mathbb{Z}_N,\, \kk^\times)|  \times |\Ext(\mathbb{Z}_N,\, \widehat{A}_1)| \times |\Hom(\mathbb{Z}_N,\, {\wedge}^2 A_1)| = 
 |\mathbb{Z}_N| \times |\mathbb{Z}_N \otimes A_1| \times |\mathbb{Z}_N \otimes {\wedge}^2 A_1| .
\end{equation*}
Using Propositions~\ref{explicit tri}, \ref{Mext pt}, and \ref{theta surj}, we compute the last factor in \eqref{int pt index =}  as
\begin{gather*}
\frac{|\Mext_{triv}(\E_1)|}{|\Mext_{triv}(\E)|} \times \frac{[\Mext_{pt}(\E_1):\Mext_{triv}(\E_1) ] }{[Mext_{pt}(\E): \Mext_{triv}(\E)]}
= 2 \times \frac{| H^2_{ab}(A,\,\widehat{A})^\eps|}{2|H^2_{ab}(A_1,\,\widehat{A}_1)^\eps|}\\
 =  \frac{ |\Sym^2(A_1)|} {|\Sym^2(A)|} = \frac{1}{|\mathbb{Z}_N| \times |\mathbb{Z}_N \otimes A_1|}.
\end{gather*}
Substituting these quantities into  \eqref{int pt index =} we obtain
\begin{equation}
\label{int pt index = substituted}
[\Mext_{int}(\E) : \Mext_{pt}(\E)]  = |\mathbb{Z}_N| \times |\mathbb{Z}_N \otimes A_1| \times |{\wedge}^3 A_1| = |{\wedge}^3 A|,
\end {equation}
as required. 
\end{proof}

Recall from \eqref{the grading} that a minimal extension  $\E \hookrightarrow \C$ admits a canonical 
faithful $A$-grading
\begin{equation}
\label{grading recalled}
\C =\bigoplus_{x\in A} \C_x,\qquad \C_e=\E.
\end{equation}
When this extension is integral, its components are of the form $\C_x =\Rep(\kk_{\mu_x}[G])$, where
\begin{equation}
\label{mu}
A \to H^2(A,\,t,\,\kk^\times) =\Pic_{int}(\E) : x\mapsto \mu_x
\end{equation}
is a group homomorphism.  It follows from the description of the group $\Picbr(\E)$ in Section~\ref{sect Picard groups} and
Remark~\ref{special extensions} that the corresponding homomorphism $A\to \Picbr(\E)$ is
\begin{equation}
\label{mu pair}
A \to H^2(A,\,t,\,\kk^\times) \times A : x\mapsto (\mu_x,\, x).
\end{equation}

Since \eqref{mu pair}  comes from a braided monoidal $2$-functor
$A \to \uuPicbr(\E)$,  it  must  satisfy
\begin{equation}
\label{Q mux x}
Q_\E(\mu_x,\,x) =1,\quad \text{for all $x\in A$},
\end{equation}
where $Q_\E: H^2(A,\,t,\,\kk^\times) \times A \to \widehat{A}$ is the quadratic form \eqref{Qint}.

Define a map $\tau_\C: A^3 \to \kk^\times$  by
\begin{equation}
\label{trilinear}
\tau_\C(x,\,y,\,z) =   \left(-1 \right)^{\xi_{\mu_x}(y)\xi_{\mu_x}(z)}\,  \frac{\mu_x(y,z)}{\mu_x(z,y)},
\end{equation}
where the bilinear map $\xi_\mu: H^2(A,\,t,\,\kk^\times)  \times A\to \mathbb{Z}/2\mathbb{Z}$, was introduced in \eqref{form xi}.

\begin{proposition}
\label{trilinear form}
The map \eqref{trilinear} is a trilinear form on $A$ satisfying
\begin{equation}
\label{t-alternating}
\text{ $\tau_\C(x,\,tx,\,y) = 1$ and   $\tau_\C(x,\, ty,\, y) = 1$.} 
\end{equation}
for all $x,y \in A$.
\end{proposition}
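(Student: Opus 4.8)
The plan is to unwind the definition of $\tau_\C$ in terms of the homomorphism $x\mapsto \mu_x$ of \eqref{mu} and reduce each assertion to a property of the bilinear pairing $\xi$ from \eqref{form xi}, the twisted multiplication $*$ of \eqref{product *}, and the quadratic constraint \eqref{Q mux x}. The key structural fact I would establish first is that, because $x \mapsto \mu_x$ is a group homomorphism into $H^2(A,t,\kk^\times)$ — whose group law is $*$, not pointwise multiplication — one has $\mu_{xx'} = \mu_x * \mu_{x'}$ on representatives, so that for $y,z\in A$,
\[
\mu_{xx'}(y,z) = \mu_x(y,z)\,\mu_{x'}(y,z)\,(-1)^{\xi_{\mu_x}(y)\,\xi_{\mu_{x'}}(z)}.
\]
Likewise $\xi_{\mu_{xx'}} = \xi_{\mu_x} + \xi_{\mu_{x'}}$ because $\xi$ is bilinear in its first argument with respect to the ordinary product of cocycles, and the correction factor in $*$ is a coboundary that does not affect $\xi$. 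Feeding these two identities into the definition $\tau_\C(x,y,z)= (-1)^{\xi_{\mu_x}(y)\xi_{\mu_x}(z)}\,\mu_x(y,z)/\mu_x(z,y)$ and tracking the signs carefully will give additivity in the first slot; the cross-term signs $(-1)^{\xi_{\mu_x}(y)\xi_{\mu_{x'}}(z)}$ from the numerator and $(-1)^{\xi_{\mu_x}(z)\xi_{\mu_{x'}}(y)}$ from the denominator combine with the change in the prefactor to cancel, leaving exactly $\tau_\C(x,y,z)\tau_\C(x',y,z)$.

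**Linearity in the second and third slots.** For additivity in $y$ and $z$ I would use that $\mu_x\in Z^2(A,\kk^\times)$ is a $2$-cocycle, so $\mu_x(y,z)\mu_x(yz,w) = \mu_x(y,zw)\mu_x(z,w)$; the ratio $\mu_x(y,z)/\mu_x(z,y)$ is then the commutator pairing of $\mu_x$, which is bimultiplicative in $(y,z)$. Combined with the bilinearity of $\xi_{\mu_x}$ in its second argument (fixing the first), the prefactor $(-1)^{\xi_{\mu_x}(y)\xi_{\mu_x}(z)}$ is also biadditive, so $\tau_\C$ is trilinear. This part is essentially the standard fact that the "alternation" of a $2$-cocycle is bilinear, plus bookkeeping.

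**The $t$-alternating relations.** The two identities $\tau_\C(x,tx,y)=1$ and $\tau_\C(x,ty,y)=1$ are where I expect to actually use the constraint \eqref{Q mux x}, namely $Q_\E(\mu_x,x)=1$, together with the explicit formula \eqref{Qint}:
\[
Q_\E(\mu,z)= \frac{\mu(zt^{\xi_\mu(z)+1},-)}{\mu(-,zt^{\xi_\mu(z)+1})}.
\]
Setting $\mu=\mu_x$, $z=x$ and evaluating $Q_\E(\mu_x,x)=1$ at an argument $y\in A$ gives
\[
\frac{\mu_x(xt^{\xi_{\mu_x}(x)+1},\,y)}{\mu_x(y,\,xt^{\xi_{\mu_x}(x)+1})}=1 .
\]
I would then show that $\tau_\C(x,ty,y)$ and $\tau_\C(x,tx,y)$ reduce — using trilinearity already proved, the symmetry $\xi_{\mu_x}(t)\equiv\xi_{\mu_x}(x)\pmod 2$-type manipulations, and the identity $\xi_{\mu_x}(t)=0$ (which follows directly from the defining formula $(-1)^{\xi_\mu(x)}=\mu(x,t)/\mu(t,x)$ applied with $x=t$, since $t$ is central of order $2$) — to exactly the left side of this displayed equation, hence to $1$. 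Concretely, $\tau_\C(x,\cdot,\cdot)$ restricted to the "diagonal-plus-$t$" directions is governed precisely by the obstruction that $Q_\E(\mu_x,x)$ measures, so its vanishing is equivalent to these relations.

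**Main obstacle.** The routine linearity computations are unavoidable but mechanical; the genuine difficulty is matching up the two bookkeeping conventions — the sign corrections built into the twisted group law $*$ on $H^2(A,t,\kk^\times)$ on the one hand, and the explicit shape of $Q_\E$ in \eqref{Qint} with its exponent $\xi_\mu(z)+1$ on the other — so that the $t$-alternating relations fall out cleanly. In particular one must be careful that $\tau_\C$ is well-defined independently of the choice of cocycle representatives $\mu_x$ (changing $\mu_x$ by a coboundary changes neither the commutator pairing $\mu_x(y,z)/\mu_x(z,y)$ nor $\xi_{\mu_x}$), and that the homomorphism property \eqref{mu} is used with the $*$-product rather than the naive one. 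I would isolate these two bookkeeping lemmas first and then the proposition becomes a short deduction.
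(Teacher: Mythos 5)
Your proposal follows essentially the same route as the paper: trilinearity in the last two slots from the alternating bilinearity of the commutator pairing $\mu_x(\cdot,\cdot)/\mu_x(\cdot,\cdot)$ and of $\xi_{\mu_x}$, multiplicativity in the first slot from $\mu_{wx}=\mu_w*\mu_x$ together with $\xi_{\mu*\nu}=\xi_\mu+\xi_\nu$ and the cancellation of the cross-term signs, and the relation $\tau_\C(x,\,tx,\,y)=1$ obtained by evaluating the constraint $Q_\E(\mu_x,\,x)=1$ via \eqref{Qint} (which also forces $\xi_{\mu_x}(x)=0$). One small correction: the second relation $\tau_\C(x,\,ty,\,y)=1$ does not require \eqref{Q mux x} at all --- it follows directly from $\xi_{\mu_x}(t)=0$ and $\mu_x(t,\,y)/\mu_x(y,\,t)=(-1)^{\xi_{\mu_x}(y)}$, which give $\tau_\C(x,\,ty,\,y)=(-1)^{\xi_{\mu_x}(y)+\xi_{\mu_x}(y)^2}=1$, exactly as in the paper's proof.
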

\begin{proof}
The linearity of $\tau_\C$ in the second and third arguments is clear since for each $\mu\in H^2(A,\, \kk^\times)$
the map 
\[
A^2 \to \kk^\times: (x,y) \mapsto \frac{\mu(x,\,y)}{\mu(y,\,x)}
\] 
is an alternating bilinear form. To check the linearity in the first argument, we compute, using the definition 
 of the product $*$ from \eqref{product *}: 
\begin{eqnarray*}
\tau_\C(wx,\, y, z)
&=&  \left(-1 \right)^{\xi_{\mu_{wx}}(y)\, \xi_{\mu_{wx}(z)}}\,  \frac{\mu_{wx}(y,z)}{\mu_{wx}(z,y)} \\
&=&   \left(-1 \right)^{\xi_{\mu_{w} * \mu_x}(y) \,\xi_{\mu_{w} *\mu_x(z)}}\,  \frac{(\mu_{w} *\mu_x)(y,z)}{(\mu_{w} *\mu_x)(z,y)}  \\
&=& \left(-1 \right)^{\xi_{\mu_{w} * \mu_x}(y) \,\xi_{\mu_{w} *\mu_x(z)}}\,   
\left(-1 \right)^{  \xi_{\mu_{x}}(y)   \xi_{\mu_{w}}(z) +   \xi_{\mu_{w}}(y)   \xi_{\mu_{x}}(z) }
        \frac{\mu_{w}(y,z)}{\mu_{w}(z,y)}  \, \frac{\mu_{x}(y,z)}{\mu_{x}(z,y)}  \\
&=&   \left(-1 \right)^{\xi_{\mu_x}(y)\xi_{\mu_x}(z) + \xi_{\mu_w}(y)\xi_{\mu_w}(z)}  \,
           \frac{\mu_{w}(y,z)}{\mu_{w}(z,y)}  \, \frac{\mu_{x}(y,z)}{\mu_{x}(z,y)}  \\
&=&  \tau_\C(w,\, y, z) \tau_\C(x,\, y, z),
\end{eqnarray*}
for all  $x,w,y,z\in A$, where we used that $\xi_{\mu *\nu} = \xi_{\mu}  +\xi_{\nu}$ for all $\mu,\nu \in H^2(A,\,t,\,\kk^\times)$.

The condition \eqref{Q mux x} along with the formula \eqref{Qint}
imply that $\xi_{\mu_x}(x) =0$, i.e., $\frac{\mu_x(x,\,t)}{\mu_x(t,\,x)}=1$, and 
\[
1 = Q_\E(\mu_x,\,x)(y) = \frac{\mu_x(xt,\,y)}{\mu_x(y,\,xt)} =   \left(-1 \right)^{\xi_{\mu_x}(xt)\xi_{\mu_x}(y) } \, \frac{\mu_x(xt,\,y)}{\mu_x(y,\,xt)}  =\tau_\C(x,\,xt,\,y),
\]
 for all $x,y\in A$, which is the first identity in \eqref{t-alternating}. Finally,
 \[
 \tau_\C(x,\,yt,\,y) = \frac{\mu_x(yt,\,y)}{\mu_x(y,\,yt)} \, \left(-1 \right)^{\xi_{\mu_x}(yt)\xi_{\mu_x}(y) } 
 = \frac{\mu_x(t,\,y)}{\mu_x(y,\,t)}\,  \left(-1 \right)^{\xi_{\mu_x}(y)^2 }  =  \left(-1 \right)^{\xi_{\mu_x}(y)+ \xi_{\mu_x}(y)^2 }  =1,
 \]
which is the second identity in \eqref{t-alternating}.
\end{proof}


%

\begin{proposition} 
\label{Mext int}
There is a (non-canonical) group isomorphism
\begin{equation}
\label{embedding into wedge3}
\Mext_{int}(\E) /  \Mext_{pt}(\E) \cong \Hom({\wedge}^3 A ,\, \kk^\times)
\end{equation}
\end{proposition}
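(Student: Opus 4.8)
The plan is to realize the isomorphism via the trilinear form $\tau_\C$ of Proposition~\ref{trilinear form}. First I would show that the assignment $\Phi\colon \C \mapsto \tau_\C$ is a group homomorphism from $\Mext_{int}(\E)$ to the group of trilinear forms $A^3 \to \kk^\times$. By Remark~\ref{special extensions} the class of an integral minimal extension $\C$ is carried by a braided monoidal $2$-functor $A \to \uuPicbr(\E)$ with underlying homomorphism $x \mapsto (\mu_x,\,x)$, and the product $\boxdot$ corresponds to the tensor product of such $2$-functors; hence the $x$-graded component of $\C^1 \boxdot \C^2$ has invariant $\mu^1_x * \mu^2_x$ in $\Pic_{int}(\E) = H^2(A,\,t,\,\kk^\times)$. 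Running the linearity computation from the proof of Proposition~\ref{trilinear form} with $\mu_{wx}$ replaced by $\mu^1_x * \mu^2_x$, the cross-terms produced by the identity $\xi_{\mu*\nu}=\xi_\mu+\xi_\nu$ cancel modulo $2$ against those coming from the definition \eqref{product *} of $*$, giving $\tau_{\C^1 \boxdot \C^2} = \tau_{\C^1}\,\tau_{\C^2}$. This $\Phi$ is well defined on equivalence classes of minimal extensions by Proposition~\ref{min ext grading}, and it is independent of the chosen cocycle representatives, since both $\mu_x(y,z)/\mu_x(z,y)$ and $\xi_{\mu_x}$ depend only on the cohomology class of $\mu_x$.

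Next I would identify $\Ker\Phi$ with $\Mext_{pt}(\E)$. If $\tau_\C = 1$, then setting the middle argument equal to $t$ and using that $\xi_\mu(t) = 0$ for all $\mu$ gives $\tau_\C(x,\,t,\,z) = (-1)^{\xi_{\mu_x}(z)}$, whence $\xi_{\mu_x}\equiv 0$; feeding this back into $\tau_\C=1$ forces each $\mu_x$ to be symmetric, hence cohomologically trivial in $H^2(A,\,\kk^\times) = H^2(A,\,t,\,\kk^\times)$. Thus the classifying homomorphism \eqref{composition A to pi0} is trivial, i.e.\ $\C$ is pointed; conversely every pointed extension has all $\mu_x$ trivial, so $\tau_\C = 1$. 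Therefore $\Phi$ descends to an injective homomorphism
\[
\bar\Phi\colon \Mext_{int}(\E)/\Mext_{pt}(\E) \hookrightarrow \{\text{trilinear forms } A^3 \to \kk^\times\},
\]
whose image lies in the subgroup of forms satisfying \eqref{t-alternating} and antisymmetric in the last two arguments (the latter since $\mu_x(y,z)/\mu_x(z,y)$ is alternating).

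It then remains to identify $\Image(\bar\Phi)$ with $\Hom({\wedge}^3 A,\,\kk^\times)$. By Lemma~\ref{index=lambda3} these two groups have the same order $|{\wedge}^3 A|$, so it suffices to exhibit an isomorphism between them. On the odd part of $A$ this is immediate: there $t$ acts trivially, $\xi$ vanishes, $\tau_\C$ is a genuine alternating form, and the image exhausts $\Hom({\wedge}^3 A(\mathrm{odd}),\,\kk^\times)$ by the order count. For the Sylow $2$-subgroup the cleanest route is the recursion $A = \mathbb{Z}_N \times A_1$ of Example~\ref{recursive pointed}: the new integral contribution coming from the $\mathbb{Z}_N$-factor is $\Hom(\mathbb{Z}_N,\,\Pic_{int}(\E_1)) \cong \Hom(\mathbb{Z}_N \otimes {\wedge}^2 A_1,\,\kk^\times)$, which matches the summand $\mathbb{Z}_N \otimes {\wedge}^2 A_1$ of ${\wedge}^3(\mathbb{Z}_N \times A_1) \cong {\wedge}^3 A_1 \oplus (\mathbb{Z}_N \otimes {\wedge}^2 A_1)$; combined with the inductive hypothesis $\Mext_{int}(\E_1)/\Mext_{pt}(\E_1) \cong \Hom({\wedge}^3 A_1,\,\kk^\times)$ this yields the claim. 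The main obstacle is exactly this last step: although Lemma~\ref{index=lambda3} pins down the order of $\Image(\bar\Phi)$, extracting the clean group $\Hom({\wedge}^3 A,\,\kk^\times)$ forces one to track the $t$-twisted relations \eqref{t-alternating} on generators of the Sylow $2$-subgroup and to verify that the inductive extension splits in the right way — the single place in the argument where a nontrivial, if routine, computation cannot be avoided.
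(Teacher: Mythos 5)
Your first two steps track the paper's proof closely: the homomorphism $\tau:\Mext_{int}(\E)\to \Hom(A^{\ot 3},\,\kk^\times)$, $\C\mapsto \tau_\C$ (your elaboration via $\mu_x^1 * \mu_x^2$ and $\xi_{\mu*\nu}=\xi_\mu+\xi_\nu$ is what the paper leaves implicit), and the identification $\Ker(\tau)=\Mext_{pt}(\E)$; your specialization $\tau_\C(x,\,t,\,z)=(-1)^{\xi_{\mu_x}(z)}$ is an acceptable variant of the paper's use of $\tau_\C(x,\,y,\,y)$. The gap is in the final step. Lemma~\ref{index=lambda3} only gives $|\Mext_{int}(\E)/\Mext_{pt}(\E)|=|{\wedge}^3 A|$, and equality of orders does not determine the group. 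Your proposed remedy --- induction on $A=\mathbb{Z}_N\times A_1$ via Example~\ref{recursive pointed} --- does not obviously close it: the K\"unneth decomposition of $\Mext(\E)$ is not known a priori to be compatible with the filtration \eqref{filtration}, and the paper explicitly warns that $\TwoFun(\mathbb{Z}_N,\,\uPic(\E_1))$ need \emph{not} split into its filtration factors (see the end of Example~\ref{recursive pointed} and Section~\ref{Z2Z2 example}), so matching the orders of the ``new contributions'' does not yield the splitting you need. You yourself flag ``verifying that the inductive extension splits in the right way'' as the unavoidable computation, but that verification is exactly the content of the proposition, and your proposal gives no mechanism for carrying it out.

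The paper closes this gap by a direct argument on the forms themselves, not by induction: choose a presentation $A=\langle e_1\rangle\times\cdots\times\langle e_r\rangle$ which also satisfies $A=\langle te_1\rangle\times\cdots\times\langle te_r\rangle$, and use trilinearity together with the relations \eqref{t-alternating} of Proposition~\ref{trilinear form} to show that $\tau_\C$ is completely determined by its values $\tau_\C(e_i,\,te_j,\,e_k)$ with $i,j,k$ distinct. This produces an embedding of $\Image(\tau)\cong \Mext_{int}(\E)/\Mext_{pt}(\E)$ into $\Hom({\wedge}^3 A,\,\kk^\times)$, and only at that point is the order count of Lemma~\ref{index=lambda3} invoked, to force the embedding to be onto. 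If you replace your inductive sketch by this determination-of-values argument (which is where the $t$-twisted relations actually get used), your proof becomes complete and essentially coincides with the paper's.
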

\begin{proof}
The map \eqref{trilinear} defines  a group homomorphism
\begin{equation}
\label{p to wedge}
\tau: \Mext_{int}(\E) \to \Hom(A^{\ot 3},\, \kk^\times) : \C \mapsto \tau_\C.
\end{equation}
A minimal extension $\C$ is pointed if and only if the corresponding homomorphism \eqref{mu} is trivial, hence
pointed extensions belong  to the kernel of $\tau$.  Conversely,  if $\tau_\C=1$ then 
\[
1= \tau_\C(x,\,y,\,y) = \left(-1 \right)^{\xi_{\mu_x}(y)},
\]
so that $\xi_{\mu_x}(y)=0$ for all $x,y\in A$.
This implies that $\tau_\C(x,\,y,\,z) =\frac{\mu_x(y,\,z)}{\mu_x(z,\,y)}=1$ for all $x,y,z\in A$, so that $\mu_x=0$
in $H^2(A,\,t,\,\kk^\times)$
and $\C$ is pointed.

Thus, $\Mext_{int}(\E) / \Mext_{pt}(\E)$ is isomorphic to the image of $\tau$ in $\Hom(A^{\ot 3},\, \kk^\times)$.
We can choose a presentation $A =\langle e_1 \rangle \times \cdots  \times \langle e_r \rangle$ such that we also have
$A =\langle te_1  \rangle \times \cdots  \times \langle te_r \rangle$.  By Proposition~\ref{trilinear form}, the trilinear form
$\tau_\C$ is completely determined by its values $\tau_\C(e_i,\, te_j,\, e_k)$ when $i,j,k$ are distinct.  Hence, 
the group of such forms is embedded into  $\Hom({\wedge}^3 A ,\, \kk^\times)$. By Lemma~\ref{index=lambda3},
this embedding must be an isomorphism.
\end{proof}

\begin{remark}
\label{T remark}
For a Tannakian category $\E=\Rep(A)$, Proposition~\ref{Mext int} implies  a well-known fact that the
alternator homomorphism $\alt: H^3(A,\, \kk^\times) \to \Hom({\wedge}^3 A,\, \kk^\times)$ defined by
\begin{equation}
\label{alt}
\alt(\omega)(x,\,y,\,z)   =\prod_{\sigma\in S_3}\, \omega(\sigma(x),\,\sigma(y),\, \sigma(z))^{\text{sign}(\sigma)}
\end{equation}
for  all  $x,y,z\in A$, is  surjective.
Indeed,  a typical minimal extension of $\Rep(A)$ is  $\Rep(A) \hookrightarrow \Z(\Vec_A^\omega)$ for
some $\omega \in H^3(A,\, \kk^\times)$. In this case,  the $2$-cocycles $\mu_x,\,x\in A,$ 
in \eqref{mu} are given by
\begin{equation}
\label{Schur multiplier gamma}
\mu_x(y,z) =\frac{ \omega(x,\, y,\, z) \omega(y,\, z,\, x)}{\omega(y,\, x,\, z)},\qquad y,z\in G,
\end{equation}
and 
\[
\tau_{\Z(\Vec_A^\omega)}(x,\,y,\,z) = \frac{\mu_x(y,\,z) }{\mu_x(z,\,y)} = \alt(\omega)(x,\,y,\,z).
\]
\end{remark}

\begin{remark}
\label{S remark}
An integral minimal extension of a pointed super-Tannakian category  $\E=\Rep(A,\,t)$ of central charge $1$ must be
equivalent to $\E \hookrightarrow  \Z(\Vec_G^\omega)$ for some  $\omega$ in $H^3(G,\, \kk^\times)$ \cite{DGNO1}.
But it is not always possible to choose a group $G$ to be Abelian. 

To see that twisted Drinfeld doubles of $A$ are not sufficient, note 
that $ \Z(\Vec_{A}^{\omega})$, where  $\omega \in H^3(A,\, \kk^\times),$ 
contains $\E$ as a fusion category only if \eqref{alt} factors through 
\[
\Hom({\wedge}^3 (A/\langle t \rangle),\, \kk^\times) \to \Hom({\wedge}^3 A,\, \kk^\times). 
\]
Indeed, suppose there is an embedding $\E \hookrightarrow \Z(\Vec_{A}^{\omega})$. Let $\mathcal{T}=  \Rep(A/\langle t \rangle) \subset \E$.
Then $\mathcal{T}'$ contains pointed fusion subcategories $\E$ and $\Rep(A)$.  So $\mathcal{T}'$ must be pointed. But this means that the $t$-component
of the canonical braiding \eqref{the grading} is trivial, so $\mu_t=1$ and $\alt(\omega)(t,\,-,\,-)=1$. 

An example of a minimal extension of $\E$ involving the twisted Drinfeld double of a non-Abelian group  can be constructed as follows.
Let $\mathcal{I}$ be an Ising category.  Consider $\Z(\Z(\mathcal{I})) = \mathcal{I} \bt \mathcal{I} \bt \mathcal{I} ^\rev \bt \mathcal{I}^\rev$.
Let $\C$ be the de-equivariantization of the maximal integral subcategory $\Z(\Z(\mathcal{I}))_{int}$ by its symmetric center (the latter is equivalent
to $\Rep(\mathbb{Z}_2)$). Then $\C$ is a minimal extension of $\C_{pt} \cong \Rep(\mathbb{Z}_2 \times \mathbb{Z}_2 \times \mathbb{Z}_2^f)$.
So $\C$ does not contain any pointed Lagrangian Tannakian subcategories and so is not equivalent to the center of a pointed fusion category.
\end{remark}

\subsection{General minimal extensions}


\begin{proposition}
\label{Mext gen}
\begin{enumerate}
\item[(a)] Let $\E$ be a Tannakian or non-split super-Tannakian category. Then all minimal extensions
of $\E$ are integral, i.e.,  $\Mext(\E) =  \Mext_{int}(\E) $.
\item[(b)] Let $\E=\Rep(A,\,t)$ be a split super-Tannakian category. Then 
\[
\Mext(\E) / \Mext_{int}(\E) \cong \Hom(A, \, \mathbb{Z}_2).
\]
\end{enumerate}
\end{proposition}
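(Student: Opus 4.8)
The plan is to control the ratio $\Mext(\E)/\Mext_{int}(\E)$ by the non-integral part of the Picard data carried by a minimal extension. By Proposition~\ref{min ext grading} a minimal extension $\E\hookrightarrow\C$ is determined by a braided monoidal $2$-functor $F_\C\colon A\to\uuPicbr(\E)$ whose underlying homomorphism $A\to\Picbr(\E)=\Pic(\E)\times A$ has second component $\id_A$. Composing with the projection $\Pic(\E)\to\Pic(\E)/\Pic_{int}(\E)$ gives a homomorphism $A\to\Pic(\E)/\Pic_{int}(\E)$, and by the characterization of integral extensions via \eqref{composition A to pi0} this homomorphism is trivial precisely when $\C$ is integral. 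So the first step is to identify $\Pic(\Rep(A,t))/\Pic_{int}(\Rep(A,t))$: by Carnovale's formula \eqref{Carnovale} together with \eqref{Picint = H2 Gtk}, this quotient is $0$ when $\langle t\rangle$ is not a direct summand of $A$ (giving part~(a) for non-split super-Tannakian $\E$; part~(a) for Tannakian $\E$ being immediate since $\Pic(\Rep(A))=H^2(A,\kk^\times)=\Pic_{int}$ for $A$ abelian), and is $\mathbb{Z}_2$ generated by $\Rep(A_0)$ with $A=A_0\times\langle t\rangle$ when $\E$ is split.

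For part~(b), fix the splitting $A=A_0\times\langle t\rangle$. The composite $A\to\Pic(\E)/\Pic_{int}(\E)\cong\mathbb{Z}_2$ is a homomorphism, i.e.\ an element of $\Hom(A,\mathbb{Z}_2)$, and the assignment $\C\mapsto$ this homomorphism is multiplicative because the tensor product $\boxdot$ of minimal extensions corresponds to the product of the associated $2$-functors $A\to\uuPicbr(\E)$ (Remark~\ref{special extensions}), hence to the pointwise product of the underlying homomorphisms to $\Pic(\E)$. This gives a group homomorphism $\Mext(\E)\to\Hom(A,\mathbb{Z}_2)$ with kernel exactly $\Mext_{int}(\E)$. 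It remains to show surjectivity onto $\Hom(A,\mathbb{Z}_2)$. Here I would exhibit, for each homomorphism $\phi\colon A\to\mathbb{Z}_2$, an explicit non-integral minimal extension realizing $\phi$. The building block is the case $\E=\sVect$: by Example~\ref{sVect example} an Ising category $\mathcal{I}$ is a minimal extension of $\sVect$ with all non-invertible objects of dimension $\sqrt2$, so the nontrivial element of $\Hom(\mathbb{Z}_2^f,\mathbb{Z}_2)$ is realized. For general split $\E=\Rep(A_0\times\langle t\rangle^{})=\Rep(A_0)\bt\sVect$, given $\phi$ one factors $\phi$ through the projection to $\langle t\rangle$-relevant data: writing $\phi$ via its restrictions, build the extension $\Rep(A_0)\bt\sVect\hookrightarrow \D\bt\mathcal{I}$ where $\D$ is a suitable (integral) minimal extension of $\Rep(A_0)$ twisted so that the grading picks up the prescribed $\mathbb{Z}_2$-data on each cyclic factor; more directly, using the Künneth decomposition and the surjectivity in Example~\ref{recursive pointed}, every nonintegral class in the quotient arises from tensoring an Ising-type generator into one cyclic factor at a time. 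Chaining these over a basis of $A$ compatible with the $\langle t\rangle$-splitting realizes an arbitrary $\phi\in\Hom(A,\mathbb{Z}_2)$.

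The main obstacle is the surjectivity argument: one must be careful that the candidate $\phi$ actually \emph{can} be realized by a \emph{minimal} extension of $\E$ itself, not merely of some larger or smaller symmetric category, and that the $t$-component constraints \eqref{Q mux x}--type conditions (which in the integral case cut down $\Mext_{int}$) do not obstruct the nonintegral classes. The cleanest route is probably to appeal to the surjectivity of $w_{(G,t)}$ in the split case (\cite{GVR}, quoted after \eqref{wGt}): since $\Mext(\sVect)\cong\mathbb{Z}_{16}$ splits off $\Mext(\Rep(A,t))$ when $\langle t\rangle$ is a direct summand, and Ising categories (the generators of odd order $16$-torsion) are manifestly nonintegral, one gets nonintegral classes for free; then combine this with Theorem~\ref{Kunneth general} applied to $\E=\Rep(A_0)\bt\sVect$ to see that the full $\Hom(A,\mathbb{Z}_2)$ is hit, since $\Hom(A,\mathbb{Z}_2)=\Hom(A_0,\mathbb{Z}_2)\times\mathbb{Z}_2$ and the $\Hom(A_0,\mathbb{Z}_2)$-part comes from the $\TwoFun(A_0,\uuPic(\sVect))$ factor whose nonintegral piece is governed by $H^1(A_0,\mathbb{Z}_2)$ in \eqref{factors of 2Fun for sVect}. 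Matching indices against Lemma~\ref{index=lambda3} and Propositions~\ref{Mext triv}--\ref{Mext int} then forces the homomorphism $\Mext(\E)\to\Hom(A,\mathbb{Z}_2)$ to be onto, completing the proof.
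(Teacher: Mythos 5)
Your proposal is correct and follows essentially the paper's own route: part (a) via $\Pic(\E)=\Pic_{int}(\E)$ in the Tannakian and non-split cases, and part (b) by mapping $\Mext(\E)$ to $\Hom(A,\mathbb{Z}_2)$ through $\Pic(\E)/\Pic_{int}(\E)$, identifying the kernel with $\Mext_{int}(\E)$, and proving surjectivity exactly as in the paper's ``cleanest route'': the K\"unneth splitting $\Mext(\E)\cong\Mext(\sVect)\times\TwoFun(A_0,\uuPic(\sVect))$ together with the Ising category, viewed as a central $\mathbb{Z}_2$-extension of $\sVect$ and composed with an arbitrary $\phi\colon A_0\to\mathbb{Z}_2$, to realize every class. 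The only difference is presentational: the paper skips your index-matching fallback and constructs the non-integral extensions directly as centers of the resulting central $A_0$-extensions of $\sVect$.
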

\begin{proof}
Part (a) is clear since in this case $\Pic(\E) = \Pic_{int}(\E)$ and so all components of the grading \eqref{the grading}
are integral. 

For the part (b),  let $A=A_0\times \langle t \rangle$ so that $\E =\Rep(A_0) \bt \sVect$.  By Theorem~\ref{Kunneth general},
\[
\Mext(\E) = \Mext(\sVect) \times \Exctr(A_0,\, \sVect).
\]
Combining this with homomorphisms $\Mext(\sVect)  \to \Pic(\sVect)$ and $\Exctr(A_0,\, \sVect) \to
\Hom(A_0, \Pic(\sVect))$ and using that $\Pic(\sVect) \cong \mathbb{Z}_2$, 
we obtain a group homomorphism $A\to \mathbb{Z}_2$. So we have a homomorphism
\begin{equation}
\label{phi}
\Mext_{int}(\E) \to \Hom(A, \, \mathbb{Z}_2).
\end{equation}
Equivalently, this can also be described as follows. Let $F_\C:A\to \uuPicbr(\E)$ be a braided monoidal $2$-functor corresponding
to a minimal extension $\E\hookrightarrow \C$.  The image of this extension under  \eqref{phi} is
\begin{equation}
\label{map to Z2}
A \xrightarrow{\pi_0(F_\C)} \Pic_{br}(\E) =\Pic(\E) \times A  \xrightarrow{p}  \Pic(\E) \to \Pic(\E)/\Pic_{int}(\E) \cong \mathbb{Z}_2,
\end{equation}
where $p$ is the projection on $\Pic(\E)$. 
Its kernel consists of integral extensions, since $\Pic_{int}(\sVect)\cong \{ 1 \}$.  It remains to check that \eqref{phi}
is surjective. For this, it suffices to check that any homomorphism $\phi: A_0 \to \Pic(\sVect)\cong \mathbb{Z}_2$  gives rise to
a central $A_0$-extension of $\sVect$. Note that any Ising category is a central $\mathbb{Z}_2$-extension of $\sVect$
and so gives a monoidal $2$-functor $\mathbb{Z}_2 \to \uuPic(\sVect)$. Composing this with $\phi$, we get
a monoidal $2$-functor $A_0 \to \uuPic(\sVect)$ and, hence, a central $A_0$-extension $\sVect \hookrightarrow {\D}$.
As explained in Section~\ref{subs Kunneth}, this gives rise to a  minimal extension of $\E$
by taking the center of $\D$.
\end{proof}

\begin{theorem}
\label{factors}
Let $\E=\Rep(A,\,t)$ be a 
super-Tannakian category. The filtration \eqref{filtration} of $\Mext(\E)$ has factors
\begin{eqnarray*}
 \Mext_{triv}(\E) &\cong& \Coker \left( H^1(A,\, \widehat{A}) \xrightarrow{\kappa^t} H^3_{ab}(A,\,\kk^\times) \right),\\
 \Mext_{pt}(\E)/ \Mext_{triv}(\E) &\cong&  \Ker \left(  H^2_{ab}(A,\, \widehat{A})^\eps \xrightarrow{\theta^t}  \Hom(A_2/ \langle t \rangle,\,\kk^\times) \right), \\
 \Mext_{int}(\E)/ \Mext_{pt}(\E)&\cong&  \Hom({\wedge}^3 A ,\, \kk^\times), \\
  \Mext(\E) /  \Mext_{int}(\E) 
  &\cong&
   \begin{cases}
    \Hom(A, \, \mathbb{Z}_2) & \text{if $\E$ is split,} \\
    0 & \text{otherwise.}
   \end{cases}
\end{eqnarray*}
\end{theorem}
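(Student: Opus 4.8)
The plan is to assemble the four composition factors from the structural results already in place, since Theorem~\ref{factors} is essentially a consolidation of Propositions~\ref{Mext triv}, \ref{Mext pt}, \ref{Mext int}, and \ref{Mext gen}. First I would recall that the chain \eqref{filtration} is a genuine filtration of the abelian group $\Mext(\E)$ by subgroups: Lemma~\ref{closed} shows that trivial, pointed, and integral minimal extensions are each closed under the tensor product $\boxdot$, and the identity $\E\hookrightarrow\Z(\E)$ is itself trivial (being pointed, since $\E$ is pointed), while inverses are given by reversing the braiding and hence preserve integrality, pointedness, and triviality. Thus all the quotients appearing in the statement are well defined, and it suffices to identify each one.

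The first factor, $\Mext_{triv}(\E)\cong\Coker(\kappa^t)$ with $\kappa^t$ as in \eqref{kappa}, is exactly Proposition~\ref{Mext triv}. For the second, Proposition~\ref{Mext pt} gives $\Mext_{pt}(\E)/\Mext_{triv}(\E)\cong\Ker(\theta^t)$ with $\theta^t$ as in \eqref{theta}; here one only needs to observe that the homomorphism $\theta^t$ named in the theorem is verbatim the one produced in that proposition from the Pontryagin--Whitehead map $PW^2$. For the third, Proposition~\ref{Mext int} supplies the (non-canonical) isomorphism $\Mext_{int}(\E)/\Mext_{pt}(\E)\cong\Hom({\wedge}^3A,\,\kk^\times)$, via the trilinear form $\tau_\C$ of \eqref{trilinear}, Proposition~\ref{trilinear form}, and the index count of Lemma~\ref{index=lambda3}. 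Finally, for the top factor I would invoke Proposition~\ref{Mext gen}: part (a) says that if $\E$ is non-split (or Tannakian) every minimal extension is integral, so $\Mext(\E)/\Mext_{int}(\E)=0$; part (b) says that if $\E=\Rep(A_0)\bt\sVect$ is split then, applying the K\"unneth isomorphism of Theorem~\ref{Kunneth general} and composing with the projection onto $\Pic(\sVect)\cong\mathbb{Z}_2$, one gets $\Mext(\E)/\Mext_{int}(\E)\cong\Hom(A,\,\mathbb{Z}_2)$. Since a super-Tannakian $\Rep(A,t)$ is either split or non-split, this case distinction is exhaustive.

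Because the substantive work has been carried out in the preceding propositions, I do not expect a genuine obstacle here; the only points requiring care are bookkeeping ones — verifying that the groups and homomorphisms $\kappa^t$, $\theta^t$, and the trilinear-form quotient named in the theorem coincide with those constructed earlier, and confirming that the four subquotients fit together over a common presentation $A=\langle e_1\rangle\times\cdots\times\langle e_r\rangle$ (which one may also choose, as in the proof of Proposition~\ref{Mext int}, so that $A=\langle te_1\rangle\times\cdots\times\langle te_r\rangle$) without inconsistency. In short, the proof is: combine Lemma~\ref{closed}, Propositions~\ref{Mext triv}, \ref{Mext pt}, \ref{Mext int}, and \ref{Mext gen}.
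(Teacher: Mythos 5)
Your proposal is correct and matches the paper's argument: the proof of Theorem~\ref{factors} is precisely the assembly of Propositions~\ref{Mext triv}, \ref{Mext pt}, \ref{Mext int}, and \ref{Mext gen} (with Lemma~\ref{closed} guaranteeing the filtration \eqref{filtration} is by subgroups). Your extra bookkeeping remarks are harmless elaborations of what the paper leaves implicit.
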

\begin{proof}
This follows from Propositions~\ref{Mext triv}, \ref{Mext pt}, \ref{Mext int}, and \ref{Mext gen}.
\end{proof}

\begin{remark}
For $t=1$, i.e., when $\E$ is Tannakian, the factors in Theorem~\ref{factors} were computed in \cite{MN} and \cite{DS}.
\end{remark}

\section{Examples}
\label{examples section}

Recall that when $t$ is a unique up to an automorphism central element  of order $2$  of a group $G$,
we use notation $\Rep(G^f)$ instead of  $\Rep(G,\,t)$.

\subsection{$\Mext(\Rep(\mathbb{Z}_{2^n}^f))$}
\label{cyclic 2-group}

\begin{proposition}
\label{two factors}
Let $A=\mathbb{Z}_m \times \mathbb{Z}_n$ and let $t\in A$ be  such that $\langle t \rangle$
is not a direct factor. Then any minimal non-degenerate extension of $\Rep(A,\,t)$ is pointed.
\end{proposition}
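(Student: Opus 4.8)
The plan is to read off the statement from the description of the filtration of $\Mext(\E)$ given in Theorem~\ref{factors}, using that the hypothesis on $t$ puts us in the non-split super-Tannakian case and that a rank-two abelian group has vanishing third exterior power.

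First I would record that the hypothesis forces $\E=\Rep(A,\,t)$ to be non-split super-Tannakian: if $\langle t\rangle$ were trivial, or if $A$ admitted a decomposition $A=A_0\times\langle t\rangle$, then $\langle t\rangle$ would be a direct factor of $A$, contrary to assumption. Hence the bottom factor $\Mext(\E)/\Mext_{int}(\E)$ in Theorem~\ref{factors} is $0$, so $\Mext(\E)=\Mext_{int}(\E)$, and it remains only to check that $\Mext_{int}(\E)=\Mext_{pt}(\E)$.

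By Theorem~\ref{factors}, $\Mext_{int}(\E)/\Mext_{pt}(\E)\cong\Hom({\wedge}^3 A,\,\kk^\times)$, so it suffices to show ${\wedge}^3 A=0$. Choosing generators $e_1,e_2$ of $A=\mathbb{Z}_m\times\mathbb{Z}_n$ and expanding by multilinearity, ${\wedge}^3 A$ is generated by the elements $e_i\wedge e_j\wedge e_k$ with $i,j,k\in\{1,2\}$; by the pigeonhole principle two of the indices coincide, and any such generator then vanishes since $a\wedge a=0$. Thus ${\wedge}^3 A=0$, whence $\Mext_{int}(\E)=\Mext_{pt}(\E)$ (the non-canonicity of the isomorphism in Theorem~\ref{factors} being harmless, since its target is the trivial group).

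Combining the two steps gives $\Mext(\E)=\Mext_{int}(\E)=\Mext_{pt}(\E)$, so every minimal non-degenerate extension of $\Rep(A,\,t)$ is pointed. Given Theorem~\ref{factors}, there is essentially no obstacle beyond the structural results of Section~\ref{Section pointed}; the only point demanding a little attention is the bookkeeping around the hypothesis, i.e., verifying that ``$\langle t\rangle$ is not a direct factor of $A$'' genuinely rules out both the Tannakian case ($t=1$) and the split super-Tannakian case, so that it is the ``$0$'' alternative in the bottom factor of Theorem~\ref{factors} that is in force.
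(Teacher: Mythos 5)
Your proof is correct and follows essentially the same route as the paper: the paper's own (one-line) argument observes that ${\wedge}^3 A=0$ and invokes Proposition~\ref{Mext int}, leaving implicit the step you spell out, namely that the hypothesis on $t$ excludes the Tannakian and split cases so that $\Mext(\E)=\Mext_{int}(\E)$ by Proposition~\ref{Mext gen}(a). Making that bookkeeping explicit is a harmless (indeed helpful) elaboration, not a different method.
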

\begin{proof}
In this case, ${\wedge}^3 A =0$,  so the result follows from Proposition~\ref{Mext int}.
\end{proof}

For $n\geq 2$ let  $\E_n = \Rep(\mathbb{Z}_{2^n}^f)$.
By Proposition~\ref{two factors}, $\Mext(\E_n) = \Mext_{pt}(\E_n)$. It follows from Propositions~\ref{Mext triv} and \ref{Mext pt}
that there is an exact sequence
\begin{equation}
\label{sequence for cyclic}
0 \to  H^1(\mathbb{Z}_{2^n},\, \mathbb{Z}_{2^n}) \to H^3_{ab}(\mathbb{Z}_{2^n},\,\kk^\times) \to
\Mext(\E_n)  \xrightarrow{\lambda}  H^2_{ab}(\mathbb{Z}_{2^n},\, \mathbb{Z}_{2^n}) \to 0,
\end{equation}
where $\lambda$ assigns to the minimal extension $\Rep(\E_n) \hookrightarrow \C$
the cohomology class of the extension $0\to \mathbb{Z}_{2^n}  \to \Inv(\C) \to \mathbb{Z}_{2^n}  \to 0$.
Since $ H^3_{ab}(\mathbb{Z}_{2^n},\,\kk^\times) \cong \mathbb{Z}_{2^{n+1}}$ and 
$H^2_{ab}(\mathbb{Z}_{2^n},\, \mathbb{Z}_{2^n})= \mathbb{Z}_{2^n}$, the above sequence becomes
\begin{equation}
\label{sequence for cyclic precise}
0 \to  \mathbb{Z}_2 \xrightarrow{\alpha}
\Mext( \E_n) \xrightarrow{\lambda} \mathbb{Z}_{2^n} \to 0.
\end{equation}

Minimal non-degenerate extensions of $\E_n$ can be explicitly described as follows.
Recall \cite{EGNO, JS} that a pointed braided fusion category $\C$ with the group $\Inv(\C)=A$ of isomorphism 
classes of invertible objects  is determined up to an equivalence by the quadratic form $q:A \to \kk^\times$, where  $q(X) =c_{X,X}$.
In this case, we denote $\C=\C(A,\,q)$.

For each $m\geq 0$  and a primitive $2^{m+1}$th root of unity  $\xi$  we define a non-degenerate quadratic form
\begin{equation}
\label{qform qxi}
q_\xi: \mathbb{Z}_{2^m} \to \kk^\times,  \qquad q_\xi(j) =\xi^{{j^2}}\quad \text{for  all} \quad j \in \mathbb{Z}_{2^m}.
\end{equation} 
The non-degenerate pointed braided fusion category $\C(\mathbb{Z}_{2^m},\, q_\xi)$ is a minimal extension of~$\E_n$.

For each $k=0,1,\dots, n$ and a $2^{2n-k+1}$th root of unity $\zeta$  let 
\begin{equation}
\label{Mkzeta}
\M_{k,\zeta} = \C(\mathbb{Z}_{2^k} , q_{-\zeta^{-2^{2(n-k)}}}  )\bt \C(\mathbb{Z}_{2^{2n-k}},\, q_\zeta)
\end{equation}
Again, this is a pointed non-degenerate braided fusion category.

\begin{proposition}
\label{Mkzeta is minext}
For all $k$ and  $\zeta$ as above,
there is a non-degenerate minimal extension $\E_n \hookrightarrow \M_{k,\zeta}$ given by the  group homomorphism
\begin{equation}
\label{iotak}
\iota_k : \mathbb{Z}_{2^n} \to \mathbb{Z}_{2^{k}}  \times \mathbb{Z}_{2^{2n-k}}   : j \mapsto (j,\, j^{2^{n-k}} ).
\end{equation}
\end{proposition}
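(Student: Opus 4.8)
The plan is to prove this by a direct computation with pointed braided fusion categories and their quadratic forms, bypassing the heavier machinery used elsewhere in this section. First I would recall the standard fact \cite{EGNO, JS} that a fully faithful braided functor between pointed braided fusion categories $\C(G_1,\,q_1)\hookrightarrow\C(G_2,\,q_2)$ is the same datum as an injective group homomorphism $\iota\colon G_1\hookrightarrow G_2$ with $\iota^{*}q_2=q_1$, the associated functor realizing $\C(G_1,\,q_1)$ as a braided fusion subcategory of $\C(G_2,\,q_2)$. Next I would identify $\E_n=\Rep(\mathbb{Z}_{2^n}^f)$ explicitly as $\C(\mathbb{Z}_{2^n},\,q)$: the simple objects of $\Rep(\mathbb{Z}_{2^n},\,t)$, with $t=2^{n-1}$, are the characters of $\mathbb{Z}_{2^n}$, and the self-braiding of a character $\phi$ is $\phi(t)$, so under the standard identification $\widehat{\mathbb{Z}_{2^n}}\cong\mathbb{Z}_{2^n}$ we have $q(j)=(-1)^{j}=(-1)^{j^2}$ for $j\in\mathbb{Z}_{2^n}$. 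With these reductions, the statement reduces to three elementary verifications.

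Injectivity of $\iota_k$ from \eqref{iotak} is immediate: in additive notation $\iota_k(j)=(j\bmod 2^{k},\,2^{n-k}j\bmod 2^{2n-k})$, and its vanishing forces $2^{k}\mid j$ together with $2^{2n-k}\mid 2^{n-k}j$, i.e.\ $2^{n}\mid j$, so $j=0$ in $\mathbb{Z}_{2^n}$. For the quadratic forms, write $\xi_1=-\zeta^{-2^{2(n-k)}}$ and $\xi_2=\zeta$; by \eqref{qform qxi} the product quadratic form on $\mathbb{Z}_{2^{k}}\times\mathbb{Z}_{2^{2n-k}}$ underlying $\M_{k,\zeta}$ in \eqref{Mkzeta} is $Q(a,b)=q_{\xi_1}(a)\,q_{\xi_2}(b)=\xi_1^{\,a^2}\xi_2^{\,b^2}$. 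Pulling back along $\iota_k$ and using $(2^{n-k}j)^2=2^{2(n-k)}j^2$, I get $\iota_k^{*}Q(j)=\bigl(\xi_1\,\xi_2^{\,2^{2(n-k)}}\bigr)^{j^2}$, and the crux is the purely formal cancellation $\xi_1\,\xi_2^{\,2^{2(n-k)}}=-\zeta^{-2^{2(n-k)}}\zeta^{2^{2(n-k)}}=-1$; hence $\iota_k^{*}Q(j)=(-1)^{j^2}=(-1)^{j}=q(j)$, so $\iota_k$ induces a braided embedding $\E_n\hookrightarrow\M_{k,\zeta}$.

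Finally I would check non-degeneracy and count dimensions. Since $\zeta$ is a primitive $2^{2n-k+1}$th root of unity, $\zeta^{-2^{2(n-k)}}$ has order exactly $2^{k+1}$, and for $k\ge 1$ multiplication by $-1$ (the unique element of order $2$ in this cyclic $2$-group of order $\ge 4$) preserves that order, so $\xi_1$ is a primitive $2^{k+1}$th root of unity and $q_{\xi_1},\,q_{\xi_2}$ are non-degenerate; for $k=0$ the first tensor factor is simply $\Vect$. Hence $\M_{k,\zeta}$, a tensor product of non-degenerate pointed braided fusion categories, is non-degenerate with $\FPdim(\M_{k,\zeta})=2^{k}\cdot 2^{2n-k}=2^{2n}=\FPdim(\E_n)^2$. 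Combined with the embedding $\E_n\hookrightarrow\M_{k,\zeta}$, the characterization of minimal non-degenerate extensions recalled in the preliminaries ($\E=\E'$ inside a non-degenerate $\C$ iff $\FPdim(\C)=\FPdim(\E)^2$) shows this embedding is a minimal non-degenerate extension. The only point requiring any care is the $2$-power bookkeeping for the roots of unity together with the degenerate case $k=0$; everything else is formal, and I do not anticipate a genuine obstacle.
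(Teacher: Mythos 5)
Your proposal is correct and follows essentially the same route as the paper: identify $\E_n$ with the metric group $(\mathbb{Z}_{2^n},\,(-1)^j)$, verify that $\iota_k$ is an injective homomorphism pulling the quadratic form of $\M_{k,\zeta}$ back to $(-1)^j$ (the paper checks this only at the generator, which suffices since the forms are of the shape $\xi^{j^2}$), and conclude a braided embedding which is minimal by the dimension count $2^k\cdot 2^{2n-k}=\FPdim(\E_n)^2$. Your extra bookkeeping on the order of $\xi_1=-\zeta^{-2^{2(n-k)}}$ and the $k=0$ case just makes explicit the non-degeneracy the paper asserts before the proposition.
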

\begin{proof}
Clearly, \eqref{iotak} is an injective group homomorphism.  Let $q_{k, \zeta}: \mathbb{Z}_{2^k} \times \mathbb{Z}_{2^{2n-k}}  \to \kk^\times$
denote the quadratic form  corresponding to  $\M_{k,\zeta}$. We have
\[
q_{k, \zeta} (\iota_k(1)) =  -\zeta^{-2^{2(n-k)}} \cdot \zeta^{(2^{n-k})^2} =-1.
\]
Viewing \eqref{iotak}  as a homomorphism of metric groups,  where $\mathbb{Z}_{2^n}$ is equipped with a quadratic character $q(l)=(-1)^l$,
we obtain a braided tensor embedding $\E_n\hookrightarrow \M_{k,\zeta}$,
i.e., a minimal extension of $\E_n$.
\end{proof}

\begin{remark}
\label{properties of Mkzetas}
\begin{enumerate}
\item[(1)]  By definition, $\M_{0, \zeta} = \C(\mathbb{Z}_{2^{2n}},\,q_\zeta)$ is a cyclic minimal extension of~$\E_n$.
\item[(2)]  For all $k$ and $\zeta$, the largest order of the root of unity that occurs as a value of $q_{k, \zeta}$ is $2^{2n-k+1}$.
\item[(3)] For all $k=0,\dots, n-1,$ the square of $\M_{k,\zeta}$ in $\Mext(\E_n)$ is $\M_{k+1,\zeta^2}$ (this is a straightforward
computation using the definition of the product of minimal extensions). 
\end{enumerate}
\end{remark}

\begin{proposition}
\label{Z2n Mext}
$\Mext(\Rep(\mathbb{Z}_{2^n}^f) \cong \mathbb{Z}_{2^{n+1}}$  with any  $\C(\mathbb{Z}_{2^{2n}},\,q_\zeta)$ as a generator. 
\end{proposition}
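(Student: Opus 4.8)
The plan is to read off $|\Mext(\E_n)|=2^{n+1}$ from the exact sequence \eqref{sequence for cyclic precise} and then produce a single class of order $2^{n+1}$. Since a group of order $2^{n+1}$ is a $2$-group, all element orders divide $2^{n+1}$, and such a group is cyclic as soon as it contains an element of order $2^{n+1}$; as $2^{n+1}$ is the only divisor of $2^{n+1}$ that does not divide $2^n$, it suffices to exhibit a minimal extension $\C$ with $\C^{2^n}\neq\mathbf 1$, where $\mathbf 1=\Z(\E_n)$ is the unit of $\Mext(\E_n)$. Such a $\C$ then has order exactly $2^{n+1}$, so it generates $\Mext(\E_n)\cong\mathbb{Z}_{2^{n+1}}$. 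I would take $\C=\C(\mathbb{Z}_{2^{2n}},q_\zeta)=\M_{0,\zeta}$ for a primitive $2^{2n+1}$th root of unity $\zeta$, which is a minimal extension of $\E_n$ by Proposition~\ref{Mkzeta is minext} (with $k=0$).

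First I would identify the power $\M_{0,\zeta}^{2^n}$. Applying Remark~\ref{properties of Mkzetas}(3) repeatedly — each step valid because the exponents $0,1,\dots,n-1$ are all in range (here $n\ge 2$) — gives $\M_{0,\zeta}^{2^k}=\M_{k,\zeta^{2^k}}$ for $0\le k\le n$, so in particular $\M_{0,\zeta}^{2^n}=\M_{n,\zeta^{2^n}}$, where $\zeta^{2^n}$ is a primitive $2^{n+1}$th root of unity.

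Next I would separate $\M_{n,\zeta^{2^n}}$ from $\mathbf 1=\Z(\E_n)$ by a braided-equivalence invariant of a pointed braided fusion category $\C(B,Q)$: the maximal order of a root of unity arising as a value of the quadratic form $Q$. By Remark~\ref{properties of Mkzetas}(2) this invariant equals $2^{n+1}$ for $\M_{n,\zeta^{2^n}}$. On the other hand, the Drinfeld center does not depend on the braiding of the input category, so $\Z(\E_n)=\Z(\Rep(\mathbb{Z}_{2^n},t))$ coincides, as a braided category, with $\Z(\Rep(\mathbb{Z}_{2^n}))\cong\C(\mathbb{Z}_{2^n}\times\mathbb{Z}_{2^n},Q_0)$, whose quadratic form $Q_0((a,\phi))=\langle a,\phi\rangle$ takes exactly the $2^n$th roots of unity as values; hence its invariant equals $2^n<2^{n+1}$. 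Therefore $\M_{n,\zeta^{2^n}}\not\cong\Z(\E_n)$, i.e. $\M_{0,\zeta}^{2^n}\neq\mathbf 1$, which completes the proof. The claim that every $\C(\mathbb{Z}_{2^{2n}},q_\zeta)$ is a generator is then immediate, since the argument used no special property of the primitive root $\zeta$.

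The main obstacle is locating the right invariant in the last step. The most obvious candidate, the central charge, does detect the difference when $n=2$ (there $\M_{n,\zeta^{2^n}}$ has central charge $-1$), but it fails for $n\ge 3$: a short Gauss-sum computation shows $\M_{n,\zeta^{2^n}}$ then has central charge $1$, the same as $\Z(\E_n)$ — which is unsurprising, as the central-charge homomorphism $w_{(\mathbb{Z}_{2^n},t)}$ is far from injective once $\langle t\rangle$ is not a direct summand. Replacing the single number $\xi(\C)$ by the whole set of values of $Q$ — i.e.\ by how deep a root of unity the double braiding can reach — yields an invariant that works uniformly in $n$, and this is precisely the content of Remark~\ref{properties of Mkzetas}(2).
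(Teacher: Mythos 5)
Your proof is correct and follows essentially the paper's route: both get $|\Mext(\E_n)|=2^{n+1}$ from the exact sequence \eqref{sequence for cyclic precise} and both use the squaring chain of Remark~\ref{properties of Mkzetas}(3) to see that $\M_{0,\zeta}$ reaches $\M_{n,\zeta^{2^n}}$ after $n$ squarings. The only (minor, and perfectly valid) divergence is how non-triviality at the top is certified: you distinguish $\M_{n,\zeta^{2^n}}$ from the unit $\Z(\E_n)$ directly via the maximal order of the values of the quadratic form (Remark~\ref{properties of Mkzetas}(2) versus the hyperbolic form of $\Z(\Rep(\mathbb{Z}_{2^n}))$), whereas the paper identifies $\M_{n,\zeta}$ with the generator of $\Ker(\lambda)\cong\mathbb{Z}_2$ and concludes that the sequence does not split.
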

\begin{proof}
We need to show that the exact sequence \eqref{sequence for cyclic precise} does not split. Observe that, 
for any primitive $2^{n+1}$th root of unity $\zeta$, the minimal extension
\[
\E_n \hookrightarrow  \M_{n,\zeta} = \C(\mathbb{Z}_{2^n},\,q_{-\zeta^{-1}}) \bt \C(\mathbb{Z}_{2^n},\,q_\zeta),
\]
where $\E_n$ is embedded diagonally, is the generator of $\Ker(\lambda)\cong \mathbb{Z}_2$ in \eqref{sequence for cyclic precise}
(in particular, its class in $\Mext(\E_n)$ does not depend on the choice of $\zeta$).  Thus,
it suffices to check that this minimal extension has a square root.  But this follows from Remark~\ref{properties of Mkzetas}(3).
\end{proof}

\begin{corollary} 
The kernel of the homomorphism 
$\Mext(\Rep(\mathbb{Z}_{2^n}^f)) 
\to \Mext(\sVect)$
is isomorphic to $\mathbb{Z}_{2^{n-2}}$.
\end{corollary}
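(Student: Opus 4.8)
The plan is to compute the homomorphism $w_{(\mathbb{Z}_{2^n},t)}\colon \Mext(\Rep(\mathbb{Z}_{2^n}^f))\to\Mext(\sVect)$ of \eqref{wGt} on a generator. Both groups are cyclic: by Proposition~\ref{Z2n Mext} the source is $\mathbb{Z}_{2^{n+1}}$, generated by the class of $\C(\mathbb{Z}_{2^{2n}},q_\zeta)$ for any primitive $2^{2n+1}$th root of unity $\zeta$, and by Example~\ref{sVect example} the target is $\mathbb{Z}_{16}$, identified via $\C\mapsto\xi(\C)$ with the group $\mu_{16}$ of $16$th roots of unity. Under this identification $w_{(\mathbb{Z}_{2^n},t)}$ is ``taking the central charge''; since the multiplicative central charge is unchanged when one passes from $\C$ to $\C^0$ (a condensation of a connected \'etale Tannakian algebra in $\C$), it sends a minimal extension $\E_n\hookrightarrow\C$ to $\xi(\C)$ itself. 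So the corollary reduces to showing that $\xi\bigl(\C(\mathbb{Z}_{2^{2n}},q_\zeta)\bigr)$ has order exactly $8$ in $\mu_{16}$.

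To this end I would use the standard fact that the central charge of a pointed non-degenerate braided fusion category is its normalized Gauss sum, so that $\xi\bigl(\C(\mathbb{Z}_{2^{2n}},q_\zeta)\bigr)=2^{-n}\sum_{j\in\mathbb{Z}_{2^{2n}}}q_\zeta(j)$. Fixing $\zeta=e^{2\pi i/2^{2n+1}}$ and writing $m=2^{2n}$, one has $\sum_{j\in\mathbb{Z}_{2^{2n}}}q_\zeta(j)=\sum_{j=0}^{m-1}e^{2\pi i j^2/(2m)}$. Because $m$ is even, the shift $j\mapsto j+m$ fixes every summand, so this equals $\tfrac12\sum_{j=0}^{2m-1}e^{2\pi i j^2/(2m)}$; and since $2m=2^{2n+1}$ is divisible by $4$ (as $n\ge 2$), the classical quadratic Gauss sum gives $\sum_{j=0}^{2m-1}e^{2\pi i j^2/(2m)}=(1+i)\sqrt{2m}$. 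Hence $\sum_{j\in\mathbb{Z}_{2^{2n}}}q_\zeta(j)=\tfrac12(1+i)\sqrt{2m}=2^{n}e^{\pi i/4}$, and dividing by $\sqrt{|\mathbb{Z}_{2^{2n}}|}=2^{n}$ yields $\xi\bigl(\C(\mathbb{Z}_{2^{2n}},q_\zeta)\bigr)=e^{\pi i/4}$, a primitive $8$th root of unity.

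It follows that $w_{(\mathbb{Z}_{2^n},t)}$ carries a generator of $\Mext(\Rep(\mathbb{Z}_{2^n}^f))\cong\mathbb{Z}_{2^{n+1}}$ to an element of order $8$, so its image is the subgroup $\mu_8\subset\mu_{16}$ and $|\Ker w_{(\mathbb{Z}_{2^n},t)}|=2^{n+1}/8=2^{n-2}$. A subgroup of a cyclic group is cyclic, so $\Ker w_{(\mathbb{Z}_{2^n},t)}\cong\mathbb{Z}_{2^{n-2}}$, as claimed. This is consistent with \cite[Corollary 4.9]{GVR}: since $\langle t\rangle$ is not a direct summand of $\mathbb{Z}_{2^n}$ for $n\ge 2$, that result already predicts $w_{(\mathbb{Z}_{2^n},t)}$ is not surjective, matching $\mathrm{Im}=\mu_8$.

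The substantive ingredients are the two facts used as black boxes: the invariance of the multiplicative central charge under the de-equivariantization defining $\C^0$ (so that $w$ can be read off from $\C$ directly), and the exact value of the quadratic Gauss sum modulo a power of $2$. In the latter one must watch the parity hypotheses --- $m=2^{2n}$ even for the doubling step, $2m=2^{2n+1}\equiv0\pmod4$ for the closed form --- which is exactly where the assumption $n\ge2$ enters; and it is precisely the primitivity of $\zeta$ that forces the resulting root of unity to have order $8$ rather than a proper divisor of $8$. Apart from that the argument is a short computation, so I do not anticipate a genuine obstacle.
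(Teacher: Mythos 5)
Your argument is correct and follows exactly the route the paper sets up: it combines Proposition~\ref{Z2n Mext} ($\Mext(\E_n)\cong\mathbb{Z}_{2^{n+1}}$ with generator $\C(\mathbb{Z}_{2^{2n}},q_\zeta)$) with the identification of $w_{(G,t)}$ with the central charge from \eqref{wGt} and \eqref{central charge 16}, and the Gauss-sum computation showing $\xi(\C(\mathbb{Z}_{2^{2n}},q_\zeta))$ is a primitive $8$th root of unity is carried out correctly. The only cosmetic point is that divisibility of $2^{2n+1}$ by $4$ needs only $n\geq 1$, not $n\geq 2$; the hypothesis $n\geq 2$ is just the standing assumption of the section.
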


\begin{remark}
The minimal extensions of $\Rep(\mathbb{Z}_{4}^f)$ and $\Rep(\mathbb{Z}_{8}^f)$ were listed in
in \cite[Tables XIV and XV]{LKW2}. Our  description of their groups of minimal extensions is consistent
with these tables and with the results of \cite{ABK} and \cite{W}.

For $n=2$ Proposition~\ref{Z2n Mext} says that $\Mext(\Rep(\mathbb{Z}_{4}^f)) \cong \mathbb{Z}_{8}$. This disagrees
with  \cite[Example 7.17]{VR}, where it is claimed that $|\Mext(\Rep(\mathbb{Z}_{4}^f))|=32$.  Our explanation 
of this discrepancy is that
\cite{VR} counts equivalence classes of  $\mathbb{Z}_2$-crossed braided extensions of  $\Z(\Rep(\sVect))$ whose
equivariantization is a minimal non-degenerate extension of $\Rep(\mathbb{Z}_{4}^f)$. However, all such 
$\mathbb{Z}_2$-crossed braided  extensions  lead to the same element of $\Mext(\Rep(\mathbb{Z}_{4}^f)$, namely, 
to the identity extension
$\Rep(\mathbb{Z}_{4}^f) \hookrightarrow \Z(\Rep(\mathbb{Z}_{4}))$.
\end{remark} 

\subsection{$\Mext({\Rep(\mathbb{Z}_{2} \times \mathbb{Z}_2^f)})$}
\label{Z2Z2 example}

Let $\E=\Rep(\mathbb{Z}_2\times \mathbb{Z}_2^f)$.  Using Theorem~\ref{factors}, we see that
the factors of the canonical filtration \eqref{filtration} of $\Mext(\E)$ are
\begin{eqnarray*}
\Mext_{triv}(\E)  &\cong& \mathbb{Z}_4 \times \mathbb{Z}_2, \\
\Mext_{pt}(\E)/ \Mext_{triv}(\E)  &=& \mathbb{Z}_2^2, \\
\Mext_{int}(\E) /  \Mext_{pt}(\E) &=&  0, \\
\Mext_{}(\E)/ \Mext_{int}(\E)  &=& \mathbb{Z}_2^2.
\end{eqnarray*}
Therefore, $|\Mext_{}(\E)|=128$. The canonical homomorphism 
\[
w: \Mext(\E) \to \Mext(\sVect) \cong \mathbb{Z}_{16},
\]
defined in \eqref{wGt}, is split surjective and, hence, 
\begin{equation}
\label{Z16 times Kerw}
\Mext(\E)  \cong \mathbb{Z}_{16} \times \Ker(w),
\end{equation}
where $|\Ker(w)|=8$.

\begin{example}
\label{ptd in Kerw}
The following categories are non-degenerate minimal extensions of $\E=\sVect\bt \sVect$ lying in $\Ker(w)$:
\begin{equation}
\label{M1M2M3}
\M_1(i) =\Z(\C(\mathbb{Z}_2,\, q_i) \bt \C(\mathbb{Z}_2,\, q_i)), \qquad \M_2(\xi) = \Z(\C(\mathbb{Z}_4,\, q_\xi)),\qquad \M_3(\I) =\Z(\I),
\end{equation}
where $i$ and $\xi$ are primitive $4$th and $8$th roots of unity, respectively, and $\I$ is an Ising braided fusion category.
We use the notation introduced in \eqref{qform qxi}.
In each of these three cases, there is a unique embedding of $\E$ (note that $\Z(\C)\cong \C \bt \C^\rev$ for any non-degenerate 
braided fusion category $\C$).

We have the following equalities in $\Mext(\E)$ :
\begin{equation}
\label{pm xi}
\M_1(i) = \M_1(-i)  \quad \text{and} \quad   \M_2(\xi) = \M_2(\xi') \quad \text{if and only if $\xi'=\pm \xi$}.
\end{equation}
\end{example}

\begin{lemma}
\label{Z4}
$\Ker(w) \cap \Mext_{pt}(\E)\cong \mathbb{Z}_{4}$ with a generator $\M_2(\xi)$.
\end{lemma}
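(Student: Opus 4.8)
The plan is to combine the general structural results of the paper with the explicit examples already set up. By Theorem~\ref{factors} for $\E=\Rep(\mathbb{Z}_2\times\mathbb{Z}_2^f)$ we have $\Mext_{int}(\E)=\Mext_{pt}(\E)$ (the $\wedge^3$-factor vanishes since $A=\mathbb{Z}_2^2$ has no alternating trilinear forms), and $\Mext_{pt}(\E)$ is an abelian $2$-group with $|\Mext_{pt}(\E)| = |\Mext_{triv}(\E)|\cdot |\Mext_{pt}(\E)/\Mext_{triv}(\E)| = 8\cdot 4 = 32$. On the other hand, the homomorphism $w:\Mext(\E)\to\Mext(\sVect)\cong\mathbb{Z}_{16}$ restricted to $\Mext_{pt}(\E)$ has image contained in $\Mext_{pt}(\sVect)$, which is the pointed part of $\mathbb{Z}_{16}$; since the central charge of a pointed minimal extension of $\sVect$ is an $8$th root of unity (the Ising generators have central charge a primitive $16$th root), $w(\Mext_{pt}(\E))\subseteq\mathbb{Z}_8\subset\mathbb{Z}_{16}$. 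Thus $|\Ker(w)\cap\Mext_{pt}(\E)|\ge 32/8 = 4$.

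Next I would produce elements realizing this. The categories $\M_2(\xi)=\Z(\C(\mathbb{Z}_4,q_\xi))$ from Example~\ref{ptd in Kerw} are pointed minimal extensions of $\E$ lying in $\Ker(w)$ (centers always have central charge $1$). I would show $\M_2(\xi)$ has order $4$ in $\Mext(\E)$: squaring a center, $\Z(\C)\boxdot\Z(\C)\cong\Z(\C\boxdot_{?}\C)$-type identities let one compute $\M_2(\xi)^2$ in terms of $\C(\mathbb{Z}_4,q_\xi)\boxtimes\C(\mathbb{Z}_4,q_\xi)$ de-equivariantized along the diagonal $\sVect\bt\sVect$; this reduces to a pointed computation with metric groups, and one checks the resulting quadratic form is nontrivial but of order $2$ in the relevant $\Mext$ group, while $\M_2(\xi)^4$ is trivial. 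Alternatively, and more cleanly, one uses that $\M_2(\xi)$ maps under $\lambda$ (the homomorphism to $H^2_{ab}$ recording the extension $0\to\widehat A\to\Inv(\C)\to A\to 0$) to an element whose order can be read off from $\Inv(\Z(\C(\mathbb{Z}_4,q_\xi))) = \mathbb{Z}_4\times\mathbb{Z}_4$ as an extension of $\mathbb{Z}_2^2$ by $\mathbb{Z}_2^2$, and this class has order $2$; combined with the $\mathbb{Z}_2$ ambiguity in $\Mext_{triv}$ this forces order dividing $4$, while the central charge / spin statistics of $\M_2(\xi)$ versus $\M_2(\xi)^2$ shows the order is exactly $4$.

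It then remains to see that the cyclic group $\langle\M_2(\xi)\rangle\cong\mathbb{Z}_4$ is all of $\Ker(w)\cap\Mext_{pt}(\E)$, i.e.\ that the latter has order exactly $4$. This follows from the counting above once we know $w(\Mext_{pt}(\E))$ has order exactly $8$: indeed $w$ restricted to the trivial extensions $\M_q$ of $\E$ hits the central charge $\langle$ quadratic form $q$ restricted appropriately $\rangle$, and by Proposition~\ref{explicit tri} $\Mext_{triv}(\E)\cong\mathbb{Z}_4\times\mathbb{Z}_2$ maps onto the order-$8$ pointed part of $\mathbb{Z}_{16}$ with kernel $\mathbb{Z}_2$ — more precisely, the trivial extension $\M_{q_\xi}$ of $\sVect\bt\sVect$ built from a quadratic form of order $8$ has central charge a primitive $8$th root of unity, while extra pointed extensions only add $2$-torsion to the central charge. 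Hence $|\Ker(w)\cap\Mext_{pt}(\E)| = 32/8 = 4$, and since it contains the order-$4$ element $\M_2(\xi)$ it is cyclic of order $4$ generated by $\M_2(\xi)$.

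The main obstacle I anticipate is the order-$4$ claim for $\M_2(\xi)$: establishing that $\M_2(\xi)^2\ne 0$ but $\M_2(\xi)^4 = 0$ requires either a careful metric-group computation of the $\boxdot$-product of centers, or a clean invariant (such as the isomorphism class of $\Inv$ as an abelian extension, or the Gauss sum / central charge refinement) that distinguishes the first three powers. I would lean on the explicit description in Example~\ref{ptd in Kerw}, in particular the relation $\M_2(\xi)=\M_2(\xi')\iff\xi'=\pm\xi$, which already signals that only $\xi^2$ (hence the squared form) is an invariant, suggesting order $4$; pinning this down rigorously via Remark~\ref{properties of Mkzetas}-style product computations is the real work.
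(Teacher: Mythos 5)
Your overall plan (count $|\Ker(w)\cap\Mext_{pt}(\E)|$ and exhibit $\M_2(\xi)$ as an element of order $4$) has the same shape as the paper's argument, and parts of it are sound: $\Mext_{int}(\E)=\Mext_{pt}(\E)$ has order $32$, and $w(\Mext_{pt}(\E))\subseteq\mathbb{Z}_8$ because pointed extensions de-equivariantize to pointed minimal extensions of $\sVect$, whose central charges are $8$th roots of unity; this gives the lower bound $|\Ker(w)\cap\Mext_{pt}(\E)|\geq 4$. Your first route to $\M_2(\xi)^2\neq 0$ (a de-equivariantization computation detecting the values of the quadratic form of the square) is essentially the paper's: the square has values that are primitive $4$th roots of unity, hence is $\M_1(i)$ and not the identity extension $\Z(\E)$. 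But note that your alternative distinguisher, the central charge of $\M_2(\xi)$ versus $\M_2(\xi)^2$, cannot work: both lie in $\Ker(w)$, being Drinfeld centers, so both have central charge $1$; it is the twist (quadratic form) values, not the central charge, that separate them.

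The genuine gap is the upper bound, i.e.\ your claim that $w(\Mext_{pt}(\E))$ has order exactly $8$. You justify it by asserting that $\Mext_{triv}(\E)\cong\mathbb{Z}_4\times\mathbb{Z}_2$ surjects onto the order-$8$ subgroup of $\mathbb{Z}_{16}$ via a trivial extension ``built from a quadratic form of order $8$''. This cannot be right: a group of exponent $4$ admits no surjection onto $\mathbb{Z}_8$, and quadratic forms on $A=\mathbb{Z}_2\times\mathbb{Z}_2$ take values in $4$th roots of unity. In fact the central charge of the trivial extension $\M_q=\C(A\times\widehat{A},\,h_q)$ equals $q(t)$, a $4$th root of unity, so $w(\Mext_{triv}(\E))\subseteq\mathbb{Z}_4$. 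Consequently your argument only yields $|\Ker(w)\cap\Mext_{pt}(\E)|\in\{4,8\}$ and does not pin down the order, nor therefore the isomorphism type. The paper closes this exactly where you are silent: $\Ker(w)$ contains the non-integral (hence non-pointed) element $\M_3(\I)=\Z(\I)$, so $\Ker(w)\cap\Mext_{pt}(\E)$ is a proper subgroup of the order-$8$ group $\Ker(w)$ and thus has order $4$; together with $\M_2(\xi)^2\neq 0$ this forces the group to be cyclic of order $4$ generated by $\M_2(\xi)$. Alternatively you could repair your route by producing an honest pointed minimal extension of $\E$ whose central charge is a primitive $8$th root of unity, but the trivial extensions cannot supply one.
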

\begin{proof}
Since $\Ker(w)$ contains a non-integral extension $\M_3(\I)$, we see that $|\Ker(w) \cap \Mext_{pt}(\E)|=4$.

The identity minimal extension $\Z(\E)$ (respectively, $\M_1(i)$ and  $\M_2(\xi)$)  is a pointed minimal extension of $\E$
characterized by the property that the largest order of the root of unity that occurs as a value of the corresponding quadratic form
is $2$ (respectively, $4$ and $8$). Since for the square of $\M_2(\xi)$ in $\Mext(\E)$ the quadratic form has values that are $4$th roots of unity,
it follows that $\M_1(i)$  is a square in $\Ker(w)$ and  the statement follows.
\end{proof}

\begin{lemma}
\label{Z8}
$\Ker(w)  \cong \mathbb{Z}_{8}$ with a generator $\M_3(\I)$ for any Ising category $\I$.
\end{lemma}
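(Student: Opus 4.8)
The plan is to compute the square of $\M_3(\I)=\Z(\I)$ in $\Mext(\E)$ and deduce that it generates $\Ker(w)$. By Example~\ref{ptd in Kerw} we already know $\M_3(\I)$ lies in $\Ker(w)$, and from the discussion preceding that example $\Ker(w)$ is an abelian group of order $8$; by Lemma~\ref{Z4} it contains $\Ker(w)\cap\Mext_{pt}(\E)\cong\mathbb{Z}_4$, so $\Ker(w)$ is either $\mathbb{Z}_8$ or $\mathbb{Z}_4\times\mathbb{Z}_2$. It therefore suffices to show that $\M_3(\I)$ has order $8$, and for this it is enough to show that $\M_3(\I)^2$ has order $4$, i.e. that it generates $\Ker(w)\cap\Mext_{pt}(\E)$. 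Since the computation will only use that $\I$ generates $\Mext(\sVect)$, it will apply uniformly to every Ising category, as the statement requires.

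The key step is the identification $\M_3(\I)^2\cong\M_2(\xi(\I)^2)$. First, writing $\E=\sVect\bt\sVect$, the canonical \'etale algebra governing $\boxdot_\E$ in $\E\bt\E$ is, after reordering the four $\sVect$-tensorands, the external product of the two canonical \'etale algebras for the individual $\sVect$ factors; hence $\boxdot_\E$ distributes over external products of minimal extensions of $\sVect$ (this is the same manipulation of $\bt$, $\boxdot$ and local modules $(-)^0_A$ used in the proof of Proposition~\ref{Ext to Mext prop}). Since $\M_3(\I)=\Z(\I)=\I\bt\I^\rev$ with $\E$ embedded via the two fermions, this yields
\[
\M_3(\I)^2=(\I\boxdot_{\sVect}\I)\bt(\I^\rev\boxdot_{\sVect}\I^\rev)=\I^2\bt(\I^2)^\rev,
\]
where I used that reversal is the inversion in $\Mext(\sVect)$, the embedding of $\E$ being again via the fermions. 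Now by Example~\ref{sVect example} the Ising category $\I$ generates $\Mext(\sVect)\cong\mathbb{Z}_{16}$, so $\I^2$ has order $8$; since all non-generators of $\Mext(\sVect)$ are pointed and the cyclic pointed extensions $\C(\mathbb{Z}_4,\,q_\zeta)$ (with $\zeta$ a primitive $8$th root of unity, central charge $\zeta$) are exactly the four elements of order $8$, we get $\I^2\cong\C(\mathbb{Z}_4,\,q_\zeta)$ with $\zeta=\xi(\I)^2$. Consequently $\M_3(\I)^2\cong\C(\mathbb{Z}_4,\,q_\zeta)\bt\C(\mathbb{Z}_4,\,q_{\zeta^{-1}})$ with $\E$ embedded via the order-$2$ elements, and by Example~\ref{ptd in Kerw} this is precisely $\Z(\C(\mathbb{Z}_4,\,q_\zeta))=\M_2(\zeta)$.

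To finish, one notes that the quadratic form of $\M_2(\zeta)=\C(\mathbb{Z}_4,\,q_\zeta)\bt\C(\mathbb{Z}_4,\,q_{\zeta^{-1}})$ takes the value $q_\zeta(1)=\zeta$ of order $8$; by the characterization of $\Z(\E)$, $\M_1(i)$ and $\M_2(\xi)$ recalled in the proof of Lemma~\ref{Z4}, this forces $\M_2(\zeta)$ to be one of the two order-$4$ elements of $\Ker(w)\cap\Mext_{pt}(\E)\cong\mathbb{Z}_4$. Hence $\M_3(\I)^2=\M_2(\xi(\I)^2)$ has order $4$, so $\M_3(\I)$ has order $8$ in the $8$-element group $\Ker(w)$, and therefore $\Ker(w)=\langle\M_3(\I)\rangle\cong\mathbb{Z}_8$.

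I expect the main obstacle to be the two category-theoretic inputs rather than the group theory: the distributivity of $\boxdot_\E$ over external products along the $\sVect$-tensorands of $\E=\sVect\bt\sVect$, and the structure of the order-$8$ part of $\Mext(\sVect)$ (that every such class is $\C(\mathbb{Z}_4,\,q_\zeta)$, pinned down by its central charge). Both amount to a careful unwinding of local modules and associated quadratic forms, but neither is deep, and once they are in hand the conclusion is immediate.
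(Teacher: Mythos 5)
Your argument is correct, and it reaches the same pivotal identity as the paper --- $\M_3(\I)^{\boxdot 2}=\M_2(\zeta)$ for a primitive $8$th root of unity $\zeta$, after which Lemma~\ref{Z4} and $|\Ker(w)|=8$ force $\Ker(w)=\langle \M_3(\I)\rangle\cong\mathbb{Z}_8$ --- but you compute the square by a different method. The paper works directly with the definition of $\boxdot$: it notes the twist values $1,-1,\zeta$ on the simple objects of $\I$ (with $\zeta$ a primitive $16$th root of unity), tracks them through the pointed part $(\I\bt\I)_{pt}\bt(\I^\rev\bt\I^\rev)_{pt}$, de-equivariantizes by the diagonal Tannakian subcategory of $\E\bt\E$, and reads off that the result is $\Z(\C(\mathbb{Z}_4,q_{\zeta^2}))=\M_2(\zeta^2)$. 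You instead exploit $\E=\sVect\bt\sVect$ and the multiplicativity of the external product under $\boxdot$ (a Künneth-type compatibility of the canonical \'etale algebras and local modules, in the spirit of the manipulations in Proposition~\ref{Ext to Mext prop}), reducing everything to the square of $\I$ in $\Mext(\sVect)\cong\mathbb{Z}_{16}$, where the central-charge isomorphism \eqref{central charge 16} pins down $\I^{\boxdot 2}\cong\C(\mathbb{Z}_4,q_{\xi(\I)^2})$ and hence $\M_3(\I)^2\cong\C(\mathbb{Z}_4,q_\zeta)\bt\C(\mathbb{Z}_4,q_{\zeta^{-1}})\cong\M_2(\zeta)$, using the uniqueness of the embedding of $\E$ from Example~\ref{ptd in Kerw}. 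Your route is conceptually cleaner and makes manifest that the answer is uniform in $\I$, but it leans on the distributivity of $\boxdot_\E$ over Deligne products of the $\sVect$-factors, which you assert by analogy rather than verify; that statement is true and provable by exactly the local-module rearrangements you cite, so this is an extra (unproved but unproblematic) input rather than a gap, whereas the paper's twist computation avoids it entirely. You could also shorten your final step by citing Lemma~\ref{Z4} and \eqref{pm xi} directly instead of re-deriving from quadratic-form values that $\M_2(\zeta)$ has order $4$.
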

\begin{proof}
For a braided Ising fusion category $\I$ the values of the canonical twist on its simple objects are $1,\, -1$, and a primitive $16$th root of unity $\zeta$,
see \cite[Appendix B]{DGNO2}.   Therefore, the values of the twist on the simple objects of the integral  part $(\I \bt \I)_{pt}$ of $\I \bt \I$ are $1,\,1,\,-1,\,-1$, 
and $\zeta^2$. By definition of the tensor product of minimal extensions, the tensor square of $\Z(\I)$ in $\Mext(\E)$ is the de-equivariantization of 
$(\I \bt \I)_{pt} \bt (\I^\rev \bt \I^\rev)_{pt}$ (viewed as a subcategory of $\Z(\I)^{\bt 2}$ ) by the diagonal Tannakian subcategory of $\E \bt \E$. 
The result contains simple  objects with the twist (quadratic form) values being primitive $8$th roots of unity. This means that 
\[
\Z(\I)^{\boxdot 2} = \Z(\C(\mathbb{Z}_4,\, q_{\zeta^2})) = \M(\zeta^2),
\]
so that $\M_3(\I)$ has order $8$ in $\Mext(\E)$ by Lemma~\ref{Z4}.
\end{proof}

\begin{corollary}
\label{128 group}
$\Mext(\Rep(\mathbb{Z}_2\times \mathbb{Z}_2^f)) \cong  \mathbb{Z}_{16} \times \mathbb{Z}_8$. The extensions  $\Z(\Rep(\mathbb{Z}_2 )) \bt \I_1$
and $\Z(\I_2)$, where $\I_1,\, \I_2$ are any Ising braided fusion categories, can be taken 
as generators of the cyclic factors.

The central charge homomorphism \eqref{wGt} is identified with the projection on the first factor.
\end{corollary}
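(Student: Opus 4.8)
The plan is to assemble the statement from three ingredients already in hand: the abstract splitting $\Mext(\E) \cong \mathbb{Z}_{16} \times \Ker(w)$ of \eqref{Z16 times Kerw}, the identification $\Ker(w) \cong \mathbb{Z}_8$ of Lemma~\ref{Z8}, and an explicit element that generates a complement to $\Ker(w)$. First I would record that, combining \eqref{Z16 times Kerw} with Lemma~\ref{Z8}, one has $\Mext(\E) \cong \mathbb{Z}_{16} \times \mathbb{Z}_8$ already as an abstract group; in particular $\Mext(\E)$ has exponent $16$, and $|\Mext(\E)| = 128$.

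Next I would analyze $a := \Z(\Rep(\mathbb{Z}_2)) \bt \I_1$. Writing $\E = \Rep(\mathbb{Z}_2) \bt \sVect$ and applying Theorem~\ref{Kunneth general} with the group $\mathbb{Z}_2$ and the symmetric category $\sVect$, the distinguished section of the split surjection $\Mext(\E) \to \Mext(\sVect)$ produced in that proof sends a class $\D$ to $\Z(\Rep(\mathbb{Z}_2)) \bt \D$, and this split surjection is the central charge homomorphism $w$ of \eqref{wGt}. Hence $a$ is the image under this section of the class of the Ising category $\I_1$, so $w(a)$ generates $\Mext(\sVect) \cong \mathbb{Z}_{16}$ by Example~\ref{sVect example}. (Equivalently, one computes directly that the de-equivariantization of the centralizer of $\Rep(\mathbb{Z}_2)$ inside $\Z(\Rep(\mathbb{Z}_2)) \bt \I_1$ is $\I_1$.) Since $w$ is a homomorphism, $\mathrm{ord}(a)$ is divisible by $\mathrm{ord}(w(a)) = 16$; being at most $16$ by the previous paragraph, it equals $16$, so $w$ restricts to an isomorphism $\langle a \rangle \xrightarrow{\sim} \mathbb{Z}_{16}$, and in particular $\langle a \rangle \cap \Ker(w) = 0$.

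Then I would invoke Lemma~\ref{Z8} once more: $b := \Z(\I_2) = \M_3(\I_2)$ generates $\Ker(w) \cong \mathbb{Z}_8$. From $\langle a \rangle \cap \langle b \rangle \subseteq \langle a \rangle \cap \Ker(w) = 0$ and $|\langle a \rangle| \cdot |\langle b \rangle| = 16 \cdot 8 = 128 = |\Mext(\E)|$ it follows that $\Mext(\E) = \langle a \rangle \oplus \langle b \rangle \cong \mathbb{Z}_{16} \times \mathbb{Z}_8$, with $a$ and $b$ generating the two cyclic factors. Finally, under this decomposition $w(a)$ generates the first factor while $w(b) = 0$ since $b \in \Ker(w)$, so $w$ is identified with the projection onto the first factor.

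I do not expect a serious obstacle here: once Lemmas~\ref{Z4} and \ref{Z8} are available, the argument is bookkeeping. The one point that needs care is the claim that $w(a)$ generates $\mathbb{Z}_{16}$ — equivalently, that $w$ is injective on $\langle a \rangle$ — since this is precisely what upgrades the abstract isomorphism $\mathbb{Z}_{16} \times \mathbb{Z}_8$ to an explicit internal direct sum compatible with the central charge and thereby pins down the stated generators.
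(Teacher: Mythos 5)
Your argument is correct and follows essentially the same route as the paper: Corollary~\ref{128 group} is obtained there by combining the splitting \eqref{Z16 times Kerw} (whose section, as in the proof of Theorem~\ref{Kunneth general}, is exactly $\D \mapsto \Z(\Rep(\mathbb{Z}_2))\bt\D$, so that $\Z(\Rep(\mathbb{Z}_2))\bt\I_1$ generates a complement of $\Ker(w)$ mapped isomorphically onto $\Mext(\sVect)\cong\mathbb{Z}_{16}$ by the central charge) with Lemma~\ref{Z8}, which gives $\Ker(w)\cong\mathbb{Z}_8$ generated by $\Z(\I_2)$. You merely spell out the bookkeeping (order of the section's image, trivial intersection, counting, and the identification of $w$ with the first projection) that the paper leaves implicit.
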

\begin{proof}
This follows from \eqref{Z16 times Kerw} and  Lemma~\ref{Z8}.
\end{proof}

\begin{corollary}
\label{128 group pt}
$\Mext_{pt}(\Rep(\mathbb{Z}_2\times \mathbb{Z}_2^f))\cong  \mathbb{Z}_{8} \times \mathbb{Z}_4$. 
\end{corollary}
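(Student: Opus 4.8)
The plan is to extract $\Mext_{pt}(\E)$ from the already-established structure of $\Mext(\E)\cong\mathbb{Z}_{16}\times\mathbb{Z}_8$ (Corollary~\ref{128 group}) together with the filtration data for $\E=\Rep(\mathbb{Z}_2\times\mathbb{Z}_2^f)$: since $\Mext_{int}(\E)/\Mext_{pt}(\E)=0$ we have $\Mext_{pt}(\E)=\Mext_{int}(\E)$, and since $\Mext(\E)/\Mext_{int}(\E)\cong\mathbb{Z}_2^2$, the group $\Mext_{pt}(\E)$ is an index-$4$ subgroup of $\mathbb{Z}_{16}\times\mathbb{Z}_8$ with $|\Mext_{pt}(\E)|=32$. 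So the whole task is to identify \emph{which} index-$4$ subgroup it is, i.e.\ to pin down its isomorphism type among the possibilities ($\mathbb{Z}_{16}\times\mathbb{Z}_2$, $\mathbb{Z}_8\times\mathbb{Z}_4$, $\mathbb{Z}_8\times\mathbb{Z}_2^2$, etc.).

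First I would use the splitting $\Mext(\E)\cong\mathbb{Z}_{16}\times\Ker(w)$ from \eqref{Z16 times Kerw}, where the $\mathbb{Z}_{16}$ factor is generated by (the class of) $\Z(\Rep(\mathbb{Z}_2))\bt\I_1$ and $\Ker(w)\cong\mathbb{Z}_8$ is generated by $\M_3(\I)=\Z(\I)$ by Lemma~\ref{Z8}. The key observation is that the homomorphism \eqref{phi} to $\Hom(A,\mathbb{Z}_2)$ detecting non-integrality is built from the non-pointedness (Ising-type) part of each tensor factor. Concretely: an Ising category is a central $\mathbb{Z}_2$-extension of $\sVect$ whose image in $\Pic(\sVect)/\Pic_{int}(\sVect)\cong\mathbb{Z}_2$ is the nontrivial element, so an extension of $\E=\sVect\bt\sVect$ is integral precisely when neither of its two ``Ising directions'' is switched on. On the $\mathbb{Z}_{16}$ generator $\Z(\Rep(\mathbb{Z}_2))\bt\I_1$, exactly one Ising direction is on, so its image in $\Mext(\E)/\Mext_{int}(\E)$ is a nonzero element of order $2$; likewise the $\mathbb{Z}_8$ generator $\Z(\I_2)$ has one Ising direction on and maps to a nonzero order-$2$ element. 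I would check (using the explicit description of $A\cong\mathbb{Z}_2\times\mathbb{Z}_2$ and the map \eqref{map to Z2}) that these two images are \emph{linearly independent} in $\mathbb{Z}_2^2$ — this is the crux and uses that the two $\sVect$ tensorands correspond to the two coordinates of $A$, so the generator $\Z(\Rep(\mathbb{Z}_2))\bt\I_1$ (Ising on the first tensorand) and $\Z(\I_2)$ (Ising on the \emph{diagonal}, hence hitting \emph{both} coordinates nontrivially after de-equivariantization) give distinct characters of $A$.

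Granting that, the surjection $\Mext(\E)\twoheadrightarrow\mathbb{Z}_2^2$ restricts to surjections on each cyclic factor, so $\Mext_{pt}(\E)=\Mext_{int}(\E)=\Ker$ of this surjection is exactly $2\mathbb{Z}_{16}\times 2\mathbb{Z}_8\cong\mathbb{Z}_8\times\mathbb{Z}_4$ inside the splitting \eqref{Z16 times Kerw}. This matches the independent count from Theorem~\ref{factors}: $|\Mext_{triv}(\E)|=8$ and $[\Mext_{pt}(\E):\Mext_{triv}(\E)]=4$ give $|\Mext_{pt}(\E)|=32$, and the explicit generators are visible as squares of the generators of $\Mext(\E)$ (e.g.\ $\M_3(\I)^{\boxdot 2}=\M_2(\zeta^2)$ has order $4$ by Lemma~\ref{Z4}, giving the $\mathbb{Z}_4$ factor, and the square of $\Z(\Rep(\mathbb{Z}_2))\bt\I_1$ is an integral pointed extension of order $8$, giving the $\mathbb{Z}_8$ factor). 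The main obstacle is the linear-independence verification in the previous paragraph: one must be careful that the diagonal de-equivariantization used in the tensor product of minimal extensions really does spread the single Ising direction of $\Z(\I_2)$ across both coordinates of $A$, rather than collapsing it — this is where an explicit computation with \eqref{map to Z2} and the Sylow/coordinate decomposition of $A=\mathbb{Z}_2\times\mathbb{Z}_2$ is needed, but it is a finite check.
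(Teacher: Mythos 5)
Your argument is correct, but it pins down $\Mext_{pt}(\E)$ by a slightly different mechanism than the paper. The paper's proof uses only one constraint: since a minimal extension whose image under the central charge homomorphism $w$ is an Ising category cannot be integral, the order-$32$ group $\Mext_{pt}(\E)$ must sit inside $2\mathbb{Z}_{16}\times\mathbb{Z}_8\cong\mathbb{Z}_8\times\mathbb{Z}_8$, and every index-$2$ subgroup of $\mathbb{Z}_8\times\mathbb{Z}_8$ is $\mathbb{Z}_8\times\mathbb{Z}_4$; no identification of the quotient $\Mext(\E)/\Mext_{int}(\E)$ is needed beyond the order count. You instead compute $\Mext_{pt}(\E)=\Mext_{int}(\E)$ exactly as the kernel of the surjection $\mathbb{Z}_{16}\times\mathbb{Z}_8\twoheadrightarrow\Mext(\E)/\Mext_{int}(\E)\cong\mathbb{Z}_2^2$, which requires knowing that the images of the two generators $\Z(\Rep(\mathbb{Z}_2))\bt\I_1$ and $\Z(\I_2)$ are linearly independent. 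That is true, and in fact the ``finite check'' you flag as the main obstacle is unnecessary: the two chosen elements generate $\Mext(\E)$, so their images generate $\mathbb{Z}_2^2$, and any generating pair of $\mathbb{Z}_2^2$ is automatically independent; hence no explicit computation with \eqref{map to Z2} or with the $A$-grading of $\Z(\I_2)$ is needed (your heuristic that the Ising direction of $\Z(\I_2)$ sits diagonally, with the component over $(1,1)$ being the integral $\sigma\bt\sigma$ piece, is nonetheless the correct picture if one does carry it out). Your route buys slightly more information --- it exhibits $\Mext_{pt}(\E)$ as exactly $2\mathbb{Z}_{16}\times 2\mathbb{Z}_8$ inside the splitting \eqref{Z16 times Kerw}, with explicit generators --- while the paper's containment-plus-counting argument is shorter and avoids the quotient altogether. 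One small inaccuracy to fix: $\E=\Rep(\mathbb{Z}_2\times\mathbb{Z}_2^f)\cong\Rep(\mathbb{Z}_2)\bt\sVect$, not $\sVect\bt\sVect$, so only one tensorand is a copy of $\sVect$; this does not affect your argument since the independence step is formal.
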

\begin{proof}
We have  seen that $|\Mext_{pt}(\Rep(\mathbb{Z}_2\times \mathbb{Z}_2^f))|=32$.  Since the minimal extensions that project
on Ising categories in $\Mext(\sVect)$ are non-integral, we conclude that $\Mext_{pt}(\Rep(\mathbb{Z}_2\times \mathbb{Z}_2^f))$
is a subgroup of $\mathbb{Z}_{8} \times \mathbb{Z}_8$, which implies the result. 
\end{proof}

\begin{remark}
All $128$ minimal extensions of $\Rep(\mathbb{Z}_2\times \mathbb{Z}_2^f)$
were listed in \cite[Tables XVI-XIX]{LKW2}. Our contribution is the computation of the group structure 
of $\Mext(\Rep(\mathbb{Z}_2\times \mathbb{Z}^f))$.
The filtration of this group  and its integral part  found in this Section are consistent with these tables.
A description of this group was also given recently in \cite[Section V.B and Appendix I]{ABK} and in 
\cite[Table V]{BCHM}, where the eight elements of the group $\Ker(w)$ were explicitly listed.
\end{remark} 

\bibliographystyle{ams-alpha}

\end{document}